\numberwithin{equation}{section}
\theoremstyle{plain}
\newtheorem{lemma}{Lemma}[section]
\newtheorem{proposition}[lemma]{Proposition}
\newtheorem{theorem}[lemma]{Theorem}
\newtheorem{corollary}[lemma]{Corollary}
\theoremstyle{definition}
\newtheorem{definition}[lemma]{Definition}
\newtheorem{remark}[lemma]{Remark}
\newtheorem{example}[lemma]{Example}
\definecolor{purple}{RGB}{150,0,100}
\newcommand{\vol}{\mbox{\rm vol}}
\renewcommand{\Im}{{ \rm Im \,}}
\renewcommand{\Re}{{\rm Re \,}}
\newcommand{\T}{\mathcal T}
\begin{document}
\newcommand{\R}{{\mathbb R}}
\newcommand{\C}{{\mathbb C}}
\newcommand{\F}{{\mathbb F}}
\renewcommand{\O}{{\mathbb O}}
\newcommand{\Z}{{\mathbb Z}} 
\newcommand{\N}{{\mathbb N}}
\newcommand{\Q}{{\mathbb Q}}
\renewcommand{\H}{{\mathbb H}}
\newcommand{\X}{{\mathfrak X}}

\newcommand{\Aa}{{\mathcal A}}
\newcommand{\Bb}{{\mathcal B}}
\newcommand{\Cc}{{\mathcal C}}    %configuration space
\newcommand{\Dd}{{\mathcal D}}
\newcommand{\Ee}{{\mathcal E}}
\newcommand{\Ff}{{\mathcal F}}
\newcommand{\Gg}{{\mathcal G}}    %gauge transformations
\newcommand{\Hh}{{\mathcal H}}
\newcommand{\Kk}{{\mathcal K}}
\newcommand{\Ii}{{\mathcal I}}
\newcommand{\Jj}{{\mathcal J}}
\newcommand{\Ll}{{\mathcal L}}    %Loop space
\newcommand{\Mm}{{\mathcal M}}    %moduli space
\newcommand{\Nn}{{\mathcal N}}
\newcommand{\Oo}{{\mathcal O}}
\newcommand{\Pp}{{\mathcal P}}
\newcommand{\Qq}{{\mathcal Q}}
\newcommand{\Rr}{{\mathcal R}}
\newcommand{\Ss}{{\mathcal S}}
\newcommand{\Tt}{{\mathcal T}}
\newcommand{\Uu}{{\mathcal U}}
\newcommand{\Vv}{{\mathcal V}}
\newcommand{\Ww}{{\mathcal W}}
\newcommand{\Xx}{{\mathcal X}}
\newcommand{\Yy}{{\mathcal Y}}
\newcommand{\Zz}{{\mathcal Z}}

\renewcommand{\a}{{\mathfrak a}}
\renewcommand{\b}{{\mathfrak b}}
\newcommand{\e}{{\mathfrak e}}
\renewcommand{\k}{{\mathfrak k}}
\newcommand{\m}{{\mathfrak m}}
\newcommand{\pg}{{\mathfrak p}}
\newcommand{\g}{{\mathfrak g}}
\newcommand{\gl}{{\mathfrak gl}}
\newcommand{\h}{{\mathfrak h}}
\renewcommand{\l}{{\mathfrak l}}
\newcommand{\sm}{{\mathfrak m}}
\newcommand{\n}{{\mathfrak n}}
\newcommand{\s}{{\mathfrak s}}
\renewcommand{\o}{{\mathfrak o}}
\renewcommand{\u}{{\mathfrak u}}
\newcommand{\su}{{\mathfrak su}}

\newcommand{\ssl}{{\mathfrak sl}}
\newcommand{\ssp}{{\mathfrak sp}}
\renewcommand{\t}{{\mathfrak t }}

\newcommand{\zt}{{\tilde z}}
\newcommand{\xt}{{\tilde x}}
\newcommand{\Ht}{\widetilde{H}}
\newcommand{\ut}{{\tilde u}}
\newcommand{\Mt}{{\widetilde M}}
\newcommand{\Llt}{{\widetilde{\mathcal L}}}
\newcommand{\yt}{{\tilde y}}
\newcommand{\vt}{{\tilde v}}
\newcommand{\Ppt}{{\widetilde{\mathcal P}}}
\newcommand{\bp }{{\bar \partial}} 

\newcommand{\Remark}{{\it Remark}}
\newcommand{\Proof}{{\it Proof}}
\newcommand{\ad}{{\rm ad}}
\newcommand{\Om}{{\Omega}}
\newcommand{\om}{{\omega}}
\newcommand{\eps}{{\varepsilon}}
\newcommand{\Di}{{\rm Diff}}
\newcommand{\pr}{{\mathrm{pr}}}

\renewcommand{\a}{{\mathfrak a}}
\renewcommand{\b}{{\mathfrak b}}
\renewcommand{\k}{{\mathfrak k}}
\renewcommand{\l}{{\mathfrak l}}
\renewcommand{\o}{{\mathfrak o}}
\renewcommand{\so}{{\mathfrak so}}
\renewcommand{\u}{{\mathfrak u}}
\renewcommand{\t}{{\mathfrak t }}
\newcommand{\Cinf}{C^{\infty}}
\newcommand{\la}{\langle}
\newcommand{\ra}{\rangle}
\newcommand{\half}{\scriptstyle\frac{1}{2}}
\newcommand{\p}{{\partial}}
\newcommand{\notsub}{\not\subset}
\newcommand{\iI}{{I}}               %unit interval [0,1]
\newcommand{\bI}{{\partial I}}      %boundary of same
\newcommand{\LRA}{\Longrightarrow}
\newcommand{\LLA}{\Longleftarrow}
\newcommand{\lra}{\longrightarrow}
\newcommand{\LLR}{\Longleftrightarrow}
\newcommand{\lla}{\longleftarrow}
\newcommand{\INTO}{\hookrightarrow}

\newcommand{\QED}{\hfill$\Box$\medskip}
\newcommand{\UuU}{\Upsilon _{\delta}(H_0) \times \Uu _{\delta} (J_0)}
\newcommand{\bm}{\boldmath}

\title[Deformation of calibrated submanifolds]{\large Strongly homotopy Lie algebras and deformations of calibrated submanifolds}

\author[D. Fiorenza]{Domenico Fiorenza}
\address{Dipartimento di Matematica ``Guido Castelnuovo",
Universit\`a di Roma ``La Sapienza",
P.le Aldo Moro 5, I-00185 Roma Italy}
\email{fiorenza@mat.uniroma1.it}

\author[H. V. L\^e]{H\^ong V\^an L\^e}
\address{Institute of Mathematics, Czech Academy of Sciences, Zitna 25, 11567 Praha 1, Czech Republic}
\email{hvle@math.cas.cz}

\author[L. Schwachh\"ofer]{Lorenz Schwachh\"ofer}
\address{Faculty for Mathematics,
TU Dortmund University,
Vogelpothsweg 87, 44221 Dortmund, Germany}
\email{lschwach@math.tu-dortmund.de} 

\author[L. Vitagliano]{Luca Vitagliano}
\address{DipMat, Universit\`a degli Studi di Salerno \& Istituto Nazionale di Fisica Nucleare, GC Salerno, via Giovanni Paolo II n${}^\circ$ 123, 84084 Fisciano (SA) Italy.}
\email{lvitagliano@unisa.it}

\thanks{Research of HVL  was supported by RVO:67985840 and the GA\v CR-project 18-00496S. LS acknowledges partial support by grant SCHW893/5-1 of the Deutsche Forschungsgemeinschaft.}

\date{\today}

\abstract  For an element $\Psi$  in  the graded vector space $\Om^*(M, TM)$
of tangent bundle valued forms on a smooth manifold $M$, a $\Psi$-submanifold is defined as a submanifold $N$ of $M$ such that $\Psi_{|N} \in \Om^*(N, TN)$.
The class of  $\Psi$-submanifolds  encompasses   calibrated  submanifolds,   complex submanifolds and  all Lie subgroups  in compact Lie groups.  The graded vector space $\Om^*(M, TM)$ carries a natural graded Lie algebra structure, given by the  Fr\"olicher-Nijenhuis bracket $[-,- ]^{FN}$. When $\Psi$ is an odd degree element with $[ \Psi, \Psi]^{FN} =0$, we associate to a  $\Psi$-submanifold  $N$ a  strongly homotopy  Lie algebra, which governs  the  formal and (under certain assumptions) smooth deformations of  $N$ as a $\Psi$-submanifold, and we show that under certain assumptions these deformations form an analytic variety. As an application we revisit  formal and smooth deformation  theory of complex  closed submanifolds  and of $\varphi$-calibrated  closed submanifolds, where $\varphi$ is a parallel  form in a real analytic Riemannian manifold. 
\endabstract

\keywords{$\Psi$-submanifold,  calibrated submanifold,  Fr\"olicher-Nijenhuis  bracket,  strongly homotopy Lie  algebra, derived bracket, formal deformation, smooth deformation, complex submanifold}
\subjclass[2010]{Primary: 32G10, 53C38, Secondary: 17B55, 53C29, 58D27}

\maketitle
\tableofcontents

\section{Introduction}

Let $(M, g)$ be a Riemannian   manifold and $\nabla$  the Levi-Civita connection.
A  differential form $\varphi$  is  called  {\it parallel}  if
$\nabla  \varphi = 0$.   In this case we shall write  $(M, g, \varphi)$. When we want to stress that $\varphi$ has degree $p$ we shall write $\varphi^{p}$ for $\varphi$. 
    If  $\varphi$ is parallel,  $\varphi$ is closed and  its comass is constant.  Normalizing the comass  of $\varphi$,  we   regard  $\varphi$ as a  calibration.  All important  calibrated submanifolds  are $\varphi$-calibrated submanifolds   for some parallel  differential form $\varphi$   \cite{Dao1977, HL1982, Le1990, McLean1998, Joyce2007}\footnote{In \cite{Dao1977} Dao, based on   previous  work by Federer  and Lawson,   proposed  to use   parallel differential forms  to study area-minimizing    real currents, but he did not invent  the  word ``calibration".}. On the other hand,   $\varphi$-calibrated submanifolds  play an important r\^ole in the geometry  of  manifolds with special holonomy, in higher  dimensional gauge theory and in string theory as  ``super-symmetric cycles"  or ``branes"  \cite{SYZ1996, DT1998,Tian2000,  GYZ2003, AW2003,   Joyce2007,  DS2011,  Walpuski2012, Walpuski2014}.  Note that  manifolds with special holonomy   always admit   parallel  forms, see  Subsection \ref{subs:parallel} below. 

Deformation theory  of closed calibrated  submanifolds  has been initiated  by McLean \cite{McLean1998} inspired by   similarities between  calibrated submanifolds and complex submanifolds.  McLean  considered deformations of   special  Lagrangian, associative,   coassociative  and Cayley submanifolds.
 In \cite{LV2017}  L\^e-Van\v zura  observed that  any $\varphi$-calibrated submanifold $L$ 
  in  a Riemannian manifold $(M, g)$ considered  by McLean (as well  as  any K\"ahler  submanifold) satisfies the following  Harvey-Lawson identity \cite[Definition 1.1]{LV2017}
\begin{equation}
|\varphi (\xi)|^2  + | \Psi_E (\xi)| ^2  = |\xi|  ^2    \text{  for  all }   x\in M,\label{eq:HL}
\end{equation}
with $\xi$ an element in the Grassmannian of unit  decomposable $k$-vectors  in $T_x M$, for  some    $E$-valued   form $\Psi_E \in \Om^*(M,E)$, where $E$  is a Riemannian  vector bundle over $M$.
In this case  the  defining equation of  $\varphi$-calibrated submanifolds $L$ is  equivalent  to $(\Psi_E)_{| L} = 0$.  McLean showed  that, in the reformulation of \cite{LV2017} using  the  Harvey-Lawson identity, 
 the equation  $(\Psi_E)_{|L} = 0$ is essentially elliptic   for special Lagrangian   and coassociative submanifolds,  and using the standard  elliptic theory
 he proved that deformations of those submanifolds are unobstructed.  Additionally,  he proved that  the equation $(\Psi_E)_{|L} =0$ is  elliptic for   associative and Cayley submanifolds   $L$,   but   deformation of those  calibrated submanifolds may be obstructed.
 
 Further works on deformations of calibrated submanifolds    are devoted  to  the smoothness   and  the Zariski  tangent space to  the   moduli space      of closed   calibrated submanifolds   that are  special Lagrangian, associative, coassociative  and Cayley in  (tamed) almost/nearly Calabi-Yau, $G_2$ and Spin(7)-manifolds  \cite{AS2008, AS2008b, GIP2003, Gayet2014, Kawai2017, Ohst2014}, or to  similar questions concerning   calibrated submanifolds with  elliptic boun\-da\-ry  condition \cite{Butscher2003,  KL2009, GW2011, Ohst2014}   and   non-com\-pact  calibrated  submanifolds  of  certain type \cite{JS2005, KL2012,  Lotay2009}.

  In the present paper  we propose  a  new   approach  to deformation of calibrated   submanifolds.
  Firstly,  we   do not look for  a Harvey-Lawson type identity. 
  Instead, using   the  first cousin principle  we characterize $\varphi$-calibrated  submanifolds up to first order
  via the vector-valued form  $\hat \varphi \in \Om^*(M, TM)$ that is obtained  from $\varphi$ by   contraction  with the metric (Lemma \ref{lem:cal1}, see also Remark \ref{rem:prn}).  
    Motivated by Lemma \ref{lem:cal1}, 
     we  introduce  the notion of a $\Psi$-submanifold (Definition \ref{def:psisubm}) and develop  a general  deformation theory for closed $\Psi$-submanifolds for    any square-zero element $\Psi$ of odd degree in the graded Lie algebra $\Om^*(M, TM)$,  using  strongly homotopy Lie algebras 
       (Proposition \ref{prop:mc}).  This  generalizes the  assignment of a strongly homotopy  Lie algebra  to a  complex submanifold (Remark \ref{rem:compl}).
  In particular, we show that under some natural assumptions, the deformation space of $\Psi$-submanifolds is a finite dimensional analytic space (Theorem \ref{thm:Main-Psi}).

   Applying this to a parallel calibration, we prove that the moduli space of  $\hat \varphi$-submanifolds within a given $\varphi$-transversal homology class is an analytic space and hence, both  the formal and the smooth  deformation   problem  for closed $\varphi$-calibrated submanifolds
in $(M, g, \varphi)$
 are encoded  in its  associated   $L_\infty$-algebra (Theorem \ref{thm:main}).

 This paper is organized as follows.  In Section \ref{sec:pre}, we collect known results   concerning    parallel   differential forms  and the  Fr\"olicher-Nijenhuis bracket that are important  for  the main part of the paper.   In  Section \ref{sec:calibrated}, we introduce the  notion of a $\Psi$-submanifold (Definition \ref{def:psisubm})  which seems a good  notion   to  understand    deformations of 
 calibrated submanifolds (Corollary \ref{cor:caldeform}).  In Section \ref{sec:linfty}, we  assign to each $\Psi$-submanifold  a  canonical strongly homotopy Lie algebra,  if 
 $\Psi$ is a square-zero  element   of odd degree  in the  graded Lie algebra $(\Om^*(M, TM), [-,- ]^{FN})$ (Theorem \ref{thm:linfty}). In Section \ref{sec:def}  we define the deformation problem
 for $\Psi$-submanifolds and study formal deformations using this strongly homotopy  Lie algebra (Proposition \ref{prop:mc}). Moreover, we show that under certain conditions the deformation space is an analytic variety (Theorem \ref{thm:Main-Psi}). In Section \ref{subs:calibr}, we apply these results to study infinitesimal, smooth  and formal deformations of calibrated submanifolds in detail 
 (Proposition \ref{prop:infinitesimal}, Theorem \ref{thm:main}) and revisit the deformation theory of complex submanifolds (Theorem
 \ref{thm:c}, Remark \ref{rem:compl}). 
  
\medskip

  {\it  Notations and conventions}.

  $\bullet$  In this paper,  manifolds and their submanifolds  are denoted by  capital Latin letters $M, L$, etc.   When we want to   emphasize  the  dimension of    a manifold $M$ (resp. a submanifold $L$)  we
  write  $M^m$ (resp. $L^l$). The tangent map to a smooth map $f : M \to N$ is denoted by $T f : TM \to TN$, and its value at the point $x \in M$ by $T_x f : T_x M \to T_{f(x)} N$.
  
  $\bullet$  Small  Greek letters  usually denote
  scalar  valued forms  and capital  Greek letters  denote   vector  valued forms. Also, we use the Einstein summation convention summing over repeated indices whenever convenient.
  
  $\bullet$ For  a  scalar  valued  form $\varphi$  on $M$ we denote  by $\hat \varphi$ the associated $TM$-valued   form on $M$ obtained from $\varphi$ by contraction with the metric (see (\ref{eq:def-partial})  and the    sentence that follows for  explanation). 
  
  $\bullet$  For a  finite dimensional (resp. infinite dimensional) vector  space $V$  we denote by $0\in V$  (resp. $\mathbf{0}\in V$) the  origin  of $V$.
  
 $\bullet$ We adopt Getzler's conventions about $L_\infty$-algebras \cite{Getzler2009}.
 
 \section{Preliminaries}\label{sec:pre}
 \subsection{Parallel  differential forms on a  Riemannian  manifold}\label{subs:parallel}
 In this section, we  recall the classification of parallel   differential forms 
 on a  Riemannian manifold  $(M, g)$,  described  in   Tables 1,  2, 3, 4   from
\cite[Chapter 10]{Besse1987}.  

Let $\varphi$ be a  parallel  form on $(M, g)$  such that $\varphi$ is  not a  multiple of the volume form. Then  the restricted holonomy group $Hol^0 (M, g)$  is  contained  in the stabilizer
$Stab (\varphi)$   and therefore is strictly smaller than the    group $O(m)$.  Since  locally 
a  Riemannian manifold $(M, g)$ is a  product  of     Riemannian  manifolds  whose holonomy group action on  the tangent   bundle is irreducible, the classification
of  parallel  forms  on $(M, g)$ is reduced to the case  of   irreducible Riemannian manifolds $(M, g)$.  Symmetric Riemannian  spaces are examples
of    manifolds    admitting parallel  forms.

$\bullet$ The algebra  of parallel   forms on an  irreducible  symmetric space $M = G/H$  is  isomorphic to the algebra  of $Ad_H$-invariant  forms on $T_e G/H$.
In particular, if $M=G/H$ is compact,  then  the  algebra    of parallel forms  is isomorphic to 
the   de Rham cohomology algebra $H^*(M, \R)$. A list of the Poincar\'e polynomials of all the simply connected compact irreducible symmetric spaces
has been compiled by Takeuchi in \cite{Takeuchi1962}.

In 1955, Marcel Berger proved that if $(M, g)$ is a   simply-connected Riemannian manifold with irreducible holonomy group and nonsymmetric, then 
$Hol^0(M, g)$ must be one of
$SO(n)$, $U(m)$ (K\"ahler  manifolds) , $SU(m)$ (Ricci flat K\"ahler  manifolds, in particular Calabi-Yau manifolds), $Sp(m)$ (hyper-K\"ahler  manifolds), $Sp(m)\times  Sp(1)$ (quaternionic  K\"ahler  manifolds), $G_2$ ($G_2$-manifolds) or Spin(7)  (Spin(7)-manifolds). 

$\bullet$  The  algebra  of  parallel forms
on a K\"ahler manifold is generated  by   the K\"ahler   2-form $\om$.

$\bullet$  The algebra  of parallel  forms  on  a Ricci flat K\"ahler  manifold   is generated by the K\"ahler  2-form $\om$ and the real   and imaginary parts  $\Re \vol_\C$, $\Im  \vol _\C$ of the complex volume form. The latter  are called  special  Lagrangian forms, abbreviated as SL-forms.

$\bullet$ The algebra  of parallel  forms on a  quaternionic  K\"ahler manifold is generated by the quarternionic 4-form $\psi$.

$\bullet$ The algebra  of parallel  forms on a  hyper-K\"ahler manifold is generated by the three K\"ahler  2-forms.

$\bullet$ The algebra  of parallel  forms on a $G_2$-manifold is generated by the associative 3-form $\varphi$  and its dual coassociative 4-form  $*\varphi$.

$\bullet$  The algebra of parallel forms on  a Spin(7)-manifold  is generated by the  self-dual Cayley 4-form $\kappa$.

We also refer   the   reader  to  \cite{Bryant1987,Salamon1989} for  the  geometry of parallel  forms
on manifolds with special holonomy.

\subsection{Fr\"olicher-Nijenhuis bracket}\label{subs:fn}
  
Let us  recall  the  definition of the  Fr\"o\-li\-cher-Nijenhuis bracket
on $\Om^*(M, TM)$ following \cite[\S 8]{KMS1993}, see also \cite[\S 2.1]{KLS2017a} for a short account.

The space $Der(\Om^*(M))$ of  graded  derivations of the  graded commutative algebra $\Om^*(M)$  is a graded Lie algebra. First we recall  the definition of   \emph{algebraic  graded  derivations}
in $Der(\Om^*(M))$. They are  defined by   insertions $\iota_K$   for  $K \in \Om^*(M, TM)$. 
For $K = \alpha^l \otimes X$  we  define
   $\iota_K  \in Der(\Om^*(M))$   as follows   
\[
\iota_{\alpha^l \otimes X} \beta^r := \alpha^l \wedge (\iota_X \beta^r) \in \Om^{l+r-1}(M).
\]

Next we define  the linear   map 
$$\Ll: \Om^*(M, TM) \to Der(\Om^*(M)), \: K \mapsto \Ll_K,$$
\begin{equation}
\Ll _K : = [\iota_K, d] \in Der (\Om^*(M)). \label{eq:derFN}
\end{equation}

\begin{proposition}
	\label{prop:decomp1}(\cite[Theorem 8.3, p. 69]{KMS1993})
 	For any  graded derivation $D \in Der (\Om^*(M))$ there are  unique $K \in \Om^* (M, TM)$  and  $K' \in \Om^*(M, TM)$ such that 
 	$$ D = \Ll _K + \iota_{K'}.$$
	We have $K' =0$ if and only if  $[D, d] = 0$  and $D$ is algebraic if and only if $K = 0$.
\end{proposition}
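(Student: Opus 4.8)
The plan is to reduce the global assertion to a computation in a single coordinate chart, exploiting the fact that graded derivations are local operators, and then to patch. First I would record locality: if $D \in Der(\Om^*(M))$ has degree $p$ and $\beta \in \Om^*(M)$ vanishes on an open set $U$, then $D\beta$ vanishes on $U$ as well. Indeed, for $x \in U$ choose a bump function $\chi$ with $\chi(x) = 1$ and $\mathrm{supp}\,\chi \subset U$; then $\chi\beta = 0$, so the graded Leibniz rule gives $0 = D(\chi\beta) = (D\chi)\wedge\beta + \chi\,D\beta$, and evaluation at $x$ (where $\beta(x) = 0$ and $\chi(x) = 1$) yields $(D\beta)(x) = 0$. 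Hence $D$ restricts to a graded derivation of $\Om^*(U)$ for every open $U$, so it suffices to construct $K$ and $K'$ locally and then check that uniqueness forces the local pieces to glue.

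The key input on a coordinate domain $U$ with coordinates $x^1, \dots, x^m$ is the following dictionary. Writing $K' = \sum_i \omega^i \otimes \partial_i$ with $\omega^i \in \Om^{p+1}(U)$ one has $\iota_{K'}(dx^j) = \omega^j$ and $\iota_{K'} f = 0$ for $f \in \Cinf(U)$; thus $K' \mapsto \iota_{K'}$ is injective, with image contained in the derivations that kill $\Cinf(U)$. Conversely, a graded derivation of $\Om^*(U)$ killing $\Cinf(U)$ is $\Cinf(U)$-linear, hence tensorial, hence of the form $\iota_{K'}$ with $(K')^j := D(dx^j)$; here one uses the standard Hadamard-lemma fact that a derivation $\Cinf(U) \to \text{module}$ is a first-order operator with vanishing zeroth-order part, hence is pinned down by its values on the $x^i$. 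Now given an arbitrary $D$ of degree $p$ on $U$, set $\omega^i := D(x^i) \in \Om^p(U)$ and $K := \sum_i \omega^i \otimes \partial_i \in \Om^p(U, TU)$; since $\iota_K x^j = 0$ we get $\Ll_K x^j = \iota_K(dx^j) = \omega^j = D(x^j)$, so $D - \Ll_K$ is a degree $p$ derivation of $\Om^*(U)$ killing $\Cinf(U)$, hence equal to $\iota_{K'}$ for a unique $K' \in \Om^{p+1}(U, TU)$. Uniqueness of $(K, K')$ on $U$ is immediate: applying $\Ll_K + \iota_{K'} = 0$ to functions gives $\iota_K(df) = 0$ for all $f$, so $K = 0$, and then $\iota_{K'} = 0$, so $K' = 0$. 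Therefore the local data agree on overlaps and patch to global $K, K' \in \Om^*(M, TM)$ with $D = \Ll_K + \iota_{K'}$, and the same recovery formulae give global uniqueness.

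It remains to check the two equivalences. A one-line computation from $\Ll_K = [\iota_K, d]$ and $d^2 = 0$ shows $[\Ll_K, d] = 0$, so $[D, d] = [\iota_{K'}, d] = \Ll_{K'}$; since $\Ll$ is injective (the identity $\Ll_{K'} f = \iota_{K'} df$ recovers $K'$ in coordinates), this gives $K' = 0$ if and only if $[D, d] = 0$. Finally $D$ is algebraic, i.e.\ of the form $\iota_{K''}$, if and only if $D$ vanishes on $\Cinf(M)$, if and only if $\Ll_K f = D f = 0$ for all $f$, if and only if $K = 0$.

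I expect the only genuinely delicate point to be the local-to-global bookkeeping together with the assertion that a graded derivation is determined by its effect on coordinate functions and their differentials, which rests on the non-trivial fact that a derivation out of $\Cinf(U)$ factors through $d\colon \Cinf(U) \to \Om^1(U)$; everything else is routine manipulation with the graded Leibniz rule.
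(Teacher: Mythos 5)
Your proof is correct. The paper gives no proof of this statement---it is quoted directly from \cite[Theorem 8.3]{KMS1993}---and your argument is essentially the standard one from that reference: locality reduces to a chart, the action on functions (pinned down via Hadamard's lemma) determines $K$, the remainder $D-\Ll_K$ kills functions and is therefore tensorial of the form $\iota_{K'}$, and the two equivalences follow from $[\Ll_K,d]=0$ and the injectivity of $\Ll$ and of $K'\mapsto\iota_{K'}$.
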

  
 It follows from Proposition  \ref{prop:decomp1}  that  the map $\Ll$ is injective
and its  image  $\Ll (\Om ^*(M, TM))$  is  the centralizer  of $d$ in
$Der (\Om^*(M))$:
\begin{equation}\label{eq:commute1}
\Ll(\Om^* (M, TM)) = \{  D \in Der (\Om^*(M))|\:  [D, d] = 0\}.
\end{equation}
Hence,  $\Ll(\Om^* (M, TM))$ is  closed  under  the graded  Lie bracket $[-,- ]$  on $Der(\Om^*(M))$.
Then   we define {\it  the Fr\"olicher-Nijenhuis bracket}  $[-,- ] ^{FN}$ on $\Om^*(M, TM)$   as  the pull-back  of the   graded Lie bracket  on $Der(\Om^*(M))$ via  the  linear embedding $\Ll$, i.e., 
\begin{equation}
 \Ll_{[K, L]^{FN}}: = [\Ll_K, \Ll_L].  
 \label{eq:derFN2}
\end{equation}
Thus,  the Fr\"olicher-Nijenhuis bracket  provides   $\Om^*(M, TM)$  with the structure of a  $\Z$-graded (hence $\Z_2$-graded)
Lie algebra.

Furthermore  the Fr\"olicher-Nijenhuis  bracket enjoys the following functoriality with respect to    local diffeomorphisms. First of all, for a local diffeomorphism $f: M \to N$   and any $K \in  \Om^*(N, TN)$,  the pull-back  of $K$ by $f$ is defined as follows:
\begin{equation}\label{eq:pullback}
 (f^*K)_x (X_1, \cdots, X_l): = (T_xf)^{-1}  K_{ f(x)}(T_xf \cdot X_1, \cdots,  T_x f \cdot X_l).
\end{equation}
Then we have \cite[8.16, p. 74]{KMS1993}
\begin{equation}\label{eq:pullback2}
f^*[K, L] ^{FN} = [f^* K, f^*L]^{FN}.
\end{equation}

Let $(M, g)$  be a Riemannian  manifold.  Recall that     the  contraction $\p_g: \Lambda^l T^*M \longrightarrow \Lambda^{l-1} T^*M \otimes TM $
is defined  pointwise as follows \cite[(2.5)]{KLS2017a} 
\begin{equation} \label{eq:def-partial}
\p_g(\varphi^l) := (\iota_{e_i} \varphi^l) \otimes (e_i),
\end{equation}
where the sum is taken  pointwise over some orthonormal basis $(e_i)$ of $T_xM$.

We also   abbreviate  $\p_g (\varphi) $  by $\hat \varphi$.

\begin{remark}\label{rem:avp} To express  $\hat \varphi$, we  shall also  use the more   convenient  notion of   a {\it $(l-1)$-fold alternating vector product  $\rho_\varphi \in \Om^{(l-1)}(M, TM)$}
{\it defined  by a differential  form $\varphi \in \Om^l(M)$},
 \cite[(3.1)]{Robles2012}  such that 
\begin{equation}\label{eq:avp}
\varphi^l (u, v_2, \cdots, v_l)= g( u, \rho_{\varphi} (v_2, \cdots,  v_l)).
\end{equation}
Then   we   have  $\hat \varphi = \rho_\varphi$.
The  notion of  an  alternating  vector  product, introduced by Robles,   is a  generalization of the  notion of  a  {\it multi-linear   vector  cross  product}, introduced by  Gray \cite{Gray1969}, where   Gray
 imposed  a further  compatibility  between  a vector cross product, which is an alternating vector product,   on  a  pseudo-Riemannian  manifold $(M, g)$  and the  pseudo-Riemannian metric   $g$. Vector  cross products  have been  used  intensively  by Harvey-Lawson in their  study  of   calibrated geometry \cite{HL1982}.
\end{remark}

A straightforward computation via geodesic normal coordinates  yields  the following

\begin{proposition}\label{prop:parallel} (cf. \cite[Proposition 2.2]{KLS2017a}) For any parallel differential form $\varphi$ on a Riemannian manifold $(M, g)$  we have
$[\hat \varphi, \hat \varphi]^{FN} = 0$.
\end{proposition}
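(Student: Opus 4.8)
The plan is to prove $[\hat\varphi,\hat\varphi]^{FN}=0$ pointwise: I will fix an arbitrary point $x\in M$, pass to geodesic normal coordinates centred at $x$, and observe that in such coordinates every first-order partial derivative of the component functions of $\hat\varphi$ vanishes \emph{at $x$} --- after which the Fr\"olicher--Nijenhuis bracket, all of whose terms carry at least one derivative, is forced to vanish at $x$.

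First I would record what normal coordinates give. Let $(x^1,\dots,x^m)$ be geodesic normal coordinates centred at $x$. Then $g_{ij}(x)=\delta_{ij}$ and $\partial_k g_{ij}(x)=0$ (equivalently the Christoffel symbols of $\nabla$ vanish at $x$); differentiating $g^{ia}g_{aj}=\delta^i_j$ at $x$ gives $\partial_k g^{ij}(x)=0$ as well. Writing $\varphi=\tfrac1{p!}\,\varphi_{i_1\cdots i_p}\,dx^{i_1}\wedge\cdots\wedge dx^{i_p}$, the hypothesis $\nabla\varphi=0$ reads $\partial_k\varphi_{i_1\cdots i_p}=\sum_{a}\Gamma^{l}_{k i_a}\varphi_{i_1\cdots l\cdots i_p}$, so $\partial_k\varphi_{i_1\cdots i_p}(x)=0$. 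Next, by (\ref{eq:def-partial}) applied in the coordinate frame, $\hat\varphi=\p_g\varphi=\sum_{j}\hat\varphi^{j}\otimes\partial_j$ with $\hat\varphi^{j}=\sum_i g^{ij}\,\iota_{\partial_i}\varphi\in\Om^{p-1}(M)$, whose components are $\sum_i g^{ij}\varphi_{i\,i_2\cdots i_p}$. By the Leibniz rule together with $\partial_k g^{ij}(x)=0$ and $\partial_k\varphi_{i_1\cdots i_p}(x)=0$, \emph{all} first partial derivatives of the components of every $\hat\varphi^{j}$ vanish at $x$; in particular $(d\hat\varphi^{j})(x)=0$ and $(\mathcal L_{\partial_k}\hat\varphi^{j})(x)=0$ for all $j,k$ (in coordinates $\mathcal L_{\partial_k}$ acts on a form by differentiating its components).

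Then I would invoke the classical formula for the Fr\"olicher--Nijenhuis bracket of decomposable tangent-valued forms \cite[\S 8]{KMS1993}: for $\alpha,\beta\in\Om^*(M)$ and vector fields $X,Y$,
\begin{multline*}
[\alpha\otimes X,\beta\otimes Y]^{FN}=\alpha\wedge\beta\otimes[X,Y]+\alpha\wedge(\mathcal L_X\beta)\otimes Y-(\mathcal L_Y\alpha)\wedge\beta\otimes X\\
\pm\, d\alpha\wedge(\iota_X\beta)\otimes Y\pm(\iota_Y\alpha)\wedge d\beta\otimes X ,
\end{multline*}
every summand of which contains $[X,Y]$, $\mathcal L_X\beta$, $\mathcal L_Y\alpha$, $d\alpha$, or $d\beta$. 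Expanding $[\hat\varphi,\hat\varphi]^{FN}=\sum_{j,k}[\hat\varphi^{j}\otimes\partial_j,\hat\varphi^{k}\otimes\partial_k]^{FN}$ by $\R$-bilinearity and evaluating at $x$: the terms with $[\partial_j,\partial_k]$ vanish identically, while the terms with $\mathcal L_{\partial_j}\hat\varphi^{k}$, $\mathcal L_{\partial_k}\hat\varphi^{j}$, $d\hat\varphi^{j}$, or $d\hat\varphi^{k}$ vanish at $x$ by the previous paragraph. Hence $[\hat\varphi,\hat\varphi]^{FN}(x)=0$, and since $x$ is arbitrary the proposition follows.

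I do not expect a genuine obstacle --- this is the ``straightforward computation'' the statement advertises --- but the one point requiring care is the bookkeeping in the last step: one must use the coordinate frame so that $[\partial_j,\partial_k]=0$, and one must check that each of the five types of terms in the bracket formula is indeed annihilated by one of the two vanishing statements. Equivalently, and more conceptually, one can note that $[-,-]^{FN}$ is a bidifferential operator of bi-order $(1,1)$ --- immediate from $\Ll_K=[\iota_K,d]$ being first order and $\Ll_{[K,L]^{FN}}=[\Ll_K,\Ll_L]$ --- so that $[\hat\varphi,\hat\varphi]^{FN}(x)$ depends only on the $1$-jet of $\hat\varphi$ at $x$, which in the chosen coordinates agrees with the $1$-jet of the tangent-valued form having those same values $\hat\varphi^{j}_{i_2\cdots i_p}(x)$ as constant components; the latter has vanishing self Fr\"olicher--Nijenhuis bracket by inspection. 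Either way, the sole use of the hypothesis is the identity $\partial_k\varphi_{i_1\cdots i_p}(x)=0$, i.e. parallelism read off in normal coordinates, combined with $\partial_k g^{ij}(x)=0$.
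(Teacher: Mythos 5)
Your proof is correct, and it is precisely the argument the paper has in mind: the paper offers no details beyond ``a straightforward computation via geodesic normal coordinates'' (citing \cite[Proposition 2.2]{KLS2017a}), and your write-up carries out exactly that computation, using that at the centre of normal coordinates all first partials of $g^{ij}$ and of the components of $\varphi$ (hence of $\hat\varphi$) vanish, so that every term of the first-order bidifferential operator $[-,-]^{FN}$ dies pointwise. The only cosmetic slip is the sign convention in your rewriting of $\nabla\varphi=0$ in coordinates, which is immaterial since only the vanishing at the centre is used.
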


\begin{definition}\label{def:MC} We   say that   an element $\Psi \in \Om^{2l+1}(M, TM)$ is {\it of   square-zero},   if $[\Psi, \Psi]^{FN} = 0$.
\end{definition}

Observe that $[\Psi, \Psi]^{FN} = 0$ for any  $\Psi \in \Om^{2l}(M, TM)$ as $[-,- ] ^{FN}$ is graded skew-symmetric.

\section{$\varphi$-calibrated submanifolds and  $\Psi$-submanifolds}\label{sec:calibrated}

  In this section, motivated by the  geometry  of  calibrated  submanifolds (Lemma \ref{lem:cal1}),  we introduce the notion
of a $\Psi$-submanifold for any $ \Psi \in  \Om^*(M, TM)$  (Definition \ref{def:psisubm}). We relate  Lemma \ref{lem:cal1} and the notion of a  $\Psi$-submanifold  with  previous  work  on  calibrated  submanifolds, their further   extensions and related   results (Remarks \ref{rem:prn}, \ref{rem:psi}).  We   provide examples  of $\Psi$-submanifolds  that are not calibrated submanifolds (Example \ref{ex:psisubm}), including all
complex submanifolds  as well as  all   Lie subgroups  in  compact  Lie  groups.    Finally  we 
give a shorter  proof  of Robles'  result that if
 $\varphi^l$ is  a  parallel $l$-form  on a Riemannian manifold $(M, g)$  then  $\hat \varphi^l$-submanifolds $L$ are minimal submanifolds  if  the restriction
of $\varphi^l$ to $L$   does not vanish (Theorem \ref{thm:crit}),  which  shall be needed  in a later section.

For a submanifold $L$ in a manifold $M$ we denote by  $NL=TM_{|L}/TN$ the normal bundle of $L$ and  by $\pr: TM_{|L} \to NL$     the  canonical projection.
If $M$ is endowed  with a Riemannian  metric $g$ then we also identify
$NL$ with the  (Riemannian)  normal bundle  of $L$ that  is the   orthogonal complement to the tangent  bundle $TL$. 

\begin{lemma}\label{lem:cal1}  Let $\varphi^l$ be a calibration on a Riemannian manifold
	$(M, g)$  and $L$   a $\varphi^l$-calibrated  submanifold.  Then $\pr\circ \hat \varphi^l _{| L} = 0 \in \Om^*(L, NL)$.
\end{lemma}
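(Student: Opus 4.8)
The plan is to reduce the identity to a pointwise statement about the tangent plane of a calibrated submanifold, namely the classical fact that a calibrated $k$-plane is a first-order maximum of the calibration on the Grassmannian. Fix $p\in L$ and choose an orthonormal basis $e_1,\dots,e_m$ of $T_pM$ adapted to $L$, so that $e_1,\dots,e_k$ span $T_pL$ and $e_{k+1},\dots,e_m$ span $N_pL$. Unwinding the definition $(\ref{eq:def-partial})$ of $\p_g$ and projecting onto $NL$, for tangent vectors $v_1,\dots,v_{k-1}\in T_pL$ one gets
\[
(\pr\circ\hat\varphi^k_{|L})(v_1,\dots,v_{k-1})\;=\;\sum_{a=k+1}^{m}\varphi^k(e_a,v_1,\dots,v_{k-1})\,e_a,
\]
so the assertion is equivalent to $\varphi^k(v_1,\dots,v_{k-1},n)=0$ for all $v_1,\dots,v_{k-1}\in T_pL$ and all $n\in N_pL$; by multilinearity and antisymmetry it is enough to check this when $v_1,\dots,v_{k-1},n$ are distinct members of the adapted basis.

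Next I would invoke the defining property of a calibration: after normalising the comass of $\varphi^k$ to $1$ one has $|\varphi^k(\xi)|\le 1$ for every unit decomposable $k$-vector $\xi$, while $L$ being $\varphi^k$-calibrated means $\varphi^k(e_1\wedge\cdots\wedge e_k)=1$ for the suitably oriented frame of $T_pL$. Thus $\xi_0:=e_1\wedge\cdots\wedge e_k$ is a maximum — in particular a critical point — of the function $\xi\mapsto\varphi^k(\xi)$ on the Grassmannian of unit decomposable $k$-vectors in $T_pM$. For a fixed $a\in\{k+1,\dots,m\}$ consider the curve $\xi_t:=e_1\wedge\cdots\wedge e_{k-1}\wedge(\cos t\,e_k+\sin t\,e_a)$ of unit decomposable $k$-vectors passing through $\xi_0$ at $t=0$; by linearity of $\varphi^k$,
\[
\varphi^k(\xi_t)\;=\;\cos t\,\varphi^k(e_1,\dots,e_k)+\sin t\,\varphi^k(e_1,\dots,e_{k-1},e_a),
\]
and differentiating at $t=0$ gives $\varphi^k(e_1,\dots,e_{k-1},e_a)=0$. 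Running the same rotation with any $k-1$ tangent basis vectors in place of $e_1,\dots,e_{k-1}$ and any normal basis vector in place of $e_a$ yields all the required vanishings; performing this at every $p\in L$ proves $\pr\circ\hat\varphi^k_{|L}=0$.

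The argument is entirely pointwise linear algebra, so the only thing to be careful about is the bookkeeping: one should fix on $L$ the orientation for which $\varphi^k|_{T_pL}$ equals the Riemannian volume form, so that $\xi_0$ is a genuine maximum of $\xi\mapsto\varphi^k(\xi)$ — though criticality is all that is actually used — and one should check from the adapted frame that the normal part of $\hat\varphi^k_{|L}$ collects exactly the coefficients $\varphi^k(v_1,\dots,v_{k-1},n)$ with no tangential terms mixed in. No analysis or ellipticity enters at this stage; this is precisely the ``first cousin'' type reformulation of the calibration condition that motivates the notion of a $\Psi$-submanifold.
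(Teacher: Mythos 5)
Your proposal is correct and takes essentially the same route as the paper: both reduce the statement, via the adapted orthonormal frame and the definition of $\p_g$, to the vanishing of $\varphi^k$ on the ``first cousins'' of the calibrated tangent plane (replace one tangent basis vector by a normal one). The only difference is that you supply a self-contained proof of the first cousin principle by the rotation curve $e_1\wedge\cdots\wedge e_{k-1}\wedge(\cos t\,e_k+\sin t\,e_a)$ and criticality of the calibration at its maximum, whereas the paper simply cites it from Harvey--Lawson.
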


\begin{proof} Let $L$ be a  $\varphi^l$-calibrated  submanifold.  Let $(e_i)$ and $(f_j)$ 
	be orthonormal bases of $T_xL$ and $N_xL$, respectively.
	Then  for any $i \in [1,k]$ we have
	\begin{equation}\label{eq:cal1}
	\pr \circ \hat \varphi ( e_1\wedge \cdots \widehat{e_i} \cdots \wedge  e_l) = \sum_{j =1}^{n-l} \varphi ( f_j\wedge e_1 \wedge \cdots \widehat{e_i}  \cdots \wedge  e_l) \otimes    f_j,
	\end{equation}
where $\widehat{e_i}$ stands for omission.
	By the first cousin principle for calibrated submanifolds \cite[p. 78]{HL1982} the  right hand side  of (\ref{eq:cal1}) vanishes. This completes  the proof.
\end{proof}

\begin{remark}\label{rem:prn} (1) Let  us denote by  $G_l(T_xM)$  the Grassmannian  of all unit  decomposable  $l$-vectors  in  $T_xM$   and by $\vec{T_xL}$  the unit
	$l$-vector   associated to  the oriented   tangent space $T_xL$ whose orientation is  defined by the  volume form $\varphi^l _{| L}$.  The Grassmanian  $G_l(T_xM)$  has   the natural Riemannian metric induced from the    Riemannian metric on 
	$T_xM$.  Note   that  the tangent  space $T_{\vec{T_xL}} G_l (T_xM)$  has  an orthogonal basis   consisting  of    $l$-vectors  of the form
	$f_j \wedge  e_1 \wedge \cdots \widehat{e_i} \cdots \wedge  e_l$.  Let $\tilde \varphi^l (x)$ denote the restriction of
	$\varphi^l(x)$ to  $G_l (T_xM)$.   Then  we have
	\begin{equation}\label{eq:prn}
	\la \pr \circ \varphi^l (e_1\wedge \cdots \widehat{e_i} \cdots \wedge  e_l),  f_j\ra  = \la  d_{ e_1\wedge \cdots \wedge e_l}\tilde \varphi^l(x), f_j \wedge  e_1 \wedge \cdots \widehat{e_i}  \cdots \wedge  e_l\ra
	\end{equation}
	where  the pairing in the LHS of (\ref{eq:prn}) is defined  via the Riemannian metric and where again, $\widehat{e_i}$ stands for omission.  Thus  the condition  $\pr\circ \hat \varphi^l _{| L} = 0 \in \Om^*(L, NL)$  is equivalent  to  the 
	condition  that $L$ is a  $\varphi$-critical manifold,   i.e., $\vec{T_xL}$ is a critical  point  of $\varphi_x$ for all $x \in L$;  see also
	\cite{Le1990}, \cite{HL2009}, \cite{Robles2012} for  study of
	$\varphi$-critical   submanifolds.  
	
	(2)  In \cite[Proposition 3.4]{Robles2012} Robles  gives a nice
	characterization  of  $\varphi$-critical   submanifolds  $L$  in  terms  of the   alternating vector product $\rho_\varphi$, namely  $TL$  is  $\rho_\varphi$-closed, i.e., $\rho_{\varphi|L} \in \Om ^* (L, TL)$.
	
	(3) Although  the    equality $\mathrm{pr} \circ  \hat \varphi _{|L}=0$    is equivalent to the condition   that $L$ is $\rho_\varphi$-closed, we   prefer  the expression $\mathrm{pr} \circ  \hat \varphi _{|L}=0$ , since it   says that $L$  is    the   zero set   of an $NL$-valued  differential form, what we shall  use   in our  deformation theory in later sections. Moreover,   this expression is similar  to that appearing in the deformation  theory for coisotropic submanifolds  in Poisson and Jacobi manifolds (see, e.g., \cite{LOTV2014}, and references therein), which led us   to our  search  for  $L_\infty$-algebras  governing   deformations of  calibrated   submanifolds.
\end{remark}

Lemma \ref{lem:cal1} motivates the following

\begin{definition}\label{def:psisubm}  Let   $M$ be a smooth manifold and  $\Psi\in \Om^{l}(M, TM)$.  A submanifold  $L^r \subset M$, where
	$r \ge l$,  will be called {\it a $\Psi$-submanifold},
	if  $\pr \circ \Psi_{|  L}  = 0 \in \Om^{l}(L^r, NL^r)$ or, equivalently,
	$ \Psi _{|L} \in \Om^l(L^r, TL^r)$.
\end{definition}

\begin{remark}\label{rem:psi}  The class   of 	 $\Psi$-submanifolds  of a manifold $M$ is larger  than   the class  of $\varphi$-critical  submanifolds, since   this definition  does not require  a metric on $M$. For instance,  any almost complex   submanifold  in an almost complex    manifold is a $\Psi$-submanifold. In contrast, the notion of a  $\varphi$-critical submanifold  in $M$  implicitly requires   a   Riemmanian metric   $g$  on $M$, which allows  us to associate  a tangent  space $T_xL$  of
	an oriented submanifold  $L \subset    M$  with  the  unit   $l$-vector  $\vec{T_x L}$.  
\end{remark}

Let us   recall  the  following  result of Robles, which we shall need  later.

\begin{theorem}\label{thm:crit} (\cite[Theorem 1.2]{Robles2012})
 Assume that $\varphi^l$  is a parallel form
on a Riemannian manifold $(M, g)$. Then
 a $\hat\varphi ^l$-submanifold $L$  is a minimal submanifold if $\varphi^l_{|  L} \not = 0$. 
 \end{theorem}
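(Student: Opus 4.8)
The plan is to show that the mean curvature vector $H$ of $L^k$ vanishes identically, using the hypothesis $\pr\circ\hat\varphi^k_{|L^k}=0$ together with the fact that $\varphi^k$ is closed (since it is parallel) and that $\varphi^k_{|L^k}$ is a nowhere-vanishing top-degree form on $L^k$. First I would set up local geometry: fix $x\in L^k$, choose a local orthonormal frame $(e_1,\dots,e_k)$ for $TL$ and $(f_1,\dots,f_{m-k})$ for $NL$ near $x$, and recall that the mean curvature is $H=\sum_i \pr(\nabla_{e_i}e_i)$, with the second fundamental form entering through $\la\nabla_{e_i}e_j,f_a\ra$. The normalization $\varphi^k_{|L^k}=\rho\,\mathrm{vol}_L$ for some positive function $\rho$ (here $\rho\equiv 1$ if $L$ carries the induced orientation, but I'd keep $\rho$ general since the statement only assumes nonvanishing), and the key point is that $\pr\circ\hat\varphi^k_{|L}=0$ is, via (\ref{eq:cal1}), exactly the statement that $\varphi^k(f_a\wedge e_1\wedge\cdots\widehat{e_i}\cdots\wedge e_k)=0$ for all $a$ and all $i$ — i.e. $\varphi^k$ annihilates every ``mixed'' $(k)$-plane spanned by one normal and $k-1$ tangent vectors.

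Next I would compute $d\varphi^k=0$ evaluated on the $(k+1)$-tuple $(f_a,e_1,\dots,e_k)$ using the intrinsic formula for the exterior derivative. Expanding, most terms are of the form $e_j\cdot\varphi^k(f_a\wedge e_1\wedge\cdots\widehat{e_j}\cdots\wedge e_k)$ (derivatives of the mixed evaluation, which vanishes identically on $L$ near $x$, hence has vanishing tangential derivative), plus one term $f_a\cdot\varphi^k(e_1\wedge\cdots\wedge e_k)=f_a\cdot\rho$ (the normal derivative of $\rho$ — but $\rho$ is only defined on $L$; this term is $f_a\cdot(\varphi^k(\vec{T L}))$ where $\vec{TL}$ is extended, and its normal derivative is controlled by the second fundamental form since $\varphi$ itself is parallel), plus the bracket terms $\varphi^k([f_a,e_j]\wedge\cdots)$ and $\varphi^k([e_i,e_j]\wedge f_a\wedge\cdots)$. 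Using $\nabla\varphi=0$, I would replace all ``$v\cdot\varphi^k(\cdots)$'' terms by $\varphi^k$ of covariant derivatives; the terms $\varphi^k(f_a\wedge\cdots)$ vanish by the mixed-plane hypothesis, and the Lie brackets decompose into $\nabla$ via $[u,v]=\nabla_u v-\nabla_v u$. The surviving contributions collapse to a single clean identity: $0=d\varphi^k(f_a,e_1,\dots,e_k)=\sum_{i} \la\nabla_{e_i}e_i,f_a\ra\,\varphi^k(e_1\wedge\cdots\wedge e_k)=\la H,f_a\ra\,\rho$, because each $\la\nabla_{e_i}f_a,e_j\ra=-\la f_a,\nabla_{e_i}e_j\ra$ feeds back (for $i\ne j$) into a mixed evaluation that dies, while the diagonal Weingarten terms assemble into the mean curvature paired against $\varphi^k(\vec{TL})$.

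Since $\rho(x)=\varphi^k_{|L}(\vec{T_xL})\ne 0$ by hypothesis and this holds for every $a$, I conclude $\la H(x),f_a\ra=0$ for all $a$, hence $H(x)=0$; as $x$ was arbitrary, $L^k$ is minimal. For exposition I would probably phrase the core computation more invariantly using the coclosedness-type identity: from $\nabla\varphi=0$ one has $\pr\circ\hat\varphi$ related to the ``second fundamental form contracted with $\varphi$,'' and the vanishing of $\pr\circ\hat\varphi_{|L}$ together with $d\varphi=0$ forces the trace of the second fundamental form against the volume form to vanish. Alternatively — and this may be the slickest route — I would cite Remark \ref{rem:prn}: equation (\ref{eq:prn}) identifies $\pr\circ\varphi^k$ with the differential of the restriction $\tilde\varphi^k$ of $\varphi$ to the Grassmann bundle, so a $\hat\varphi^k$-submanifold is precisely one whose Gauss map lands in critical points of $\tilde\varphi^k$ along mixed directions; combined with the first variation of volume (whose Euler–Lagrange operator is exactly $H$) and the fact that the calibration inequality $\varphi^k(\xi)\le|\xi|$ makes $\vec{TL}$ a constrained critical point when $\varphi^k_{|L}$ has maximal value, one gets $H=0$. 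The main obstacle I anticipate is bookkeeping: correctly accounting for the normal-derivative term $f_a\cdot\varphi^k(\vec{TL})$ — one must extend $\vec{TL}$ off $L$ and check that, after using $\nabla\varphi=0$, this term produces exactly $\sum_i\la\nabla_{f_a}e_i,e_i\ra$-type pieces that cancel or reorganize, so that no spurious ``normal variation of $\rho$'' survives. Getting the signs and the index contractions right in the $d\varphi^k=0$ expansion is where the real work lies; everything else is formal.
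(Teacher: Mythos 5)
Your proposal follows essentially the same route as the paper: expand $d\varphi^k=0$ (equivalently $\iota_X d\varphi^k=0$ for $X$ normal) against a tangent frame, kill every term involving a mixed evaluation $\varphi^k(f_a\wedge e_1\wedge\cdots\widehat{e_i}\cdots\wedge e_k)$ using the defining condition of a $\hat\varphi^k$-submanifold, and read off $\la H,X\ra\,\varphi^k(\vec{T L})=0$. The only real difference is how the normal-derivative term is dispatched: the paper first observes, via (\ref{eq:prn}) and the classification of parallel forms, that $\varphi^k(\vec{T_xL})$ is a nonzero \emph{constant} $c$, whereas you keep it as a function $\rho$ and propose to absorb that term through a parallel extension of the frame using $\nabla\varphi=0$ --- both resolutions work.
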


We provide below a  short  proof  of Theorem \ref{thm:crit}, using the argument in the proof of  Lemma 1.1 in \cite{Le1990}.

\begin{proof}   Let  $L$ be a  $\hat \varphi^l$-submanifold  in $(M, g)$. We shall compute the mean curvature $H$  of $L$. Let $V_1, \cdots, V_l$ be   local vector  fields
on $L$ such that  $| V_1 \wedge \cdots \wedge  V_l| =1$. By Remark \ref{rem:prn}(1), for each $x \in  L$ the unit $l$-vector $\vec{T_x L}$  is a critical  point  of the  function  $\xi \mapsto \varphi^l(\xi)$. 
Hence we have
\begin{equation}\label{eq:const}
 \varphi^l(\vec{T_xL} ) = c
 \end{equation}
 for some  constant $c$  (this follows  from  the classification of  parallel  forms on $(M, g)$, see e.g. 
\cite[Theorem 10.108, Corollary 10.110]{Besse1987}  and  Subsection \ref{subs:parallel}).  Recall that $c \not = 0$ by the assumption of Theorem \ref{thm:crit}.

Let  $X$   be a normal vector on $L$. Using the argument in the proof of  Lemma 1.1 in \cite{Le1990} we compute
\begin{eqnarray}
 0 = (\iota_X d\varphi^l)(V_1, \cdots, V_l)  = \sum_{i=1}^l (-1)^iV_i \bigl(\varphi^l (X, V_1, \cdots, \hat{V_i}, \cdots, V_l)\bigr) \nonumber\\
 - X \bigl(\varphi^l (V_1, \cdots, V_l)\bigr) + \sum_{1\le i < j \le l} (-1) ^{i+j} \varphi^l ([V_i, V_j], X, \cdots, \hat {V_i}, \cdots, \hat {V_j}, \cdots,  V_l)\nonumber \\
 + \sum_{i=1}^l (-1) ^i \varphi^l ([X, V_i], \cdots \hat{V_i}  , \cdots  V_l),\label{eq:const1}
 \end{eqnarray}
where $\hat V_i$ stands for omission of this entry. The first  and third  terms in (\ref{eq:const1}) are zero, since $L$ is a $\hat \varphi^l$-submanifold.
 The second term  is zero by (\ref{eq:const}).  Hence we get  from (\ref{eq:const1})
\[
0 = \sum_{ i=1}^l (-1) ^i \varphi^l ([X, V_i], \cdots, \hat{V_i} , \cdots, V_l)  = c\sum_{i=1}^l \la [X, V_i ], V_i \ra = c \la -H, X \ra .
\]
Since $c \not =0$  we obtain $ H =0$.   This  proves Theorem \ref{thm:crit}.
\end{proof}

\begin{corollary}\label{cor:caldeform}  Let $\varphi$ be a  parallel calibration. A deformation of a closed $\varphi$-calibrated   submanifold inside  the  class of $\hat \varphi$-submanifolds 
remains  in the  subclass of $\varphi$-calibrated submanifolds.
\end{corollary}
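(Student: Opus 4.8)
The plan is to promote the pointwise constancy of $\varphi^k$ along a $\hat\varphi^k$-submanifold — the identity \eqref{eq:const} established inside the proof of Theorem~\ref{thm:crit} — to constancy \emph{along the deformation}, by observing that the constant in question is confined to a fixed discrete set. I would set up the deformation as a smooth family of embeddings $f_t\colon L\to M$ of a fixed closed $k$-manifold $L$, with $t$ ranging over a connected parameter space $T$ containing $0$, where $f_0$ is the inclusion of the given $\varphi^k$-calibrated submanifold and $L_t:=f_t(L)$ is a $\hat\varphi^k$-submanifold for every $t$. It then suffices to show that $S:=\{t\in T : L_t \text{ is } \varphi^k\text{-calibrated}\}$ is nonempty, open and closed; connectedness of $T$ finishes.

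For \textbf{openness} — the heart of the matter — fix $t_0\in S$. Since $f_{t_0}^*\varphi^k=\mathrm{vol}_{L_{t_0}}$ is nowhere vanishing and $f_t$ depends continuously on $t$ (and $L$ is closed), $f_t^*\varphi^k$ stays nowhere zero for $t$ near $t_0$; hence $\varphi^k|_{L_t}\neq 0$ and $L_t$ carries the orientation making $\varphi^k|_{L_t}$ positive. By Remark~\ref{rem:prn}, formula~\eqref{eq:prn}, the defining condition $\pr\circ\hat\varphi^k|_{L_t}=0$ says precisely that $\vec{T_xL_t}$ is a critical point of the restriction $\tilde\varphi^k(x)$ of $\varphi^k(x)$ to $G_k(T_xM)$, for every $x\in L_t$. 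By the classification of parallel forms (see \cite[Theorem~10.108, Corollary~10.110]{Besse1987} and Subsection~\ref{subs:parallel}) the critical values of $\tilde\varphi^k(x)$ form a finite set, the same at every point of $M$ by parallelism and independent of $t$ since $\varphi$ is fixed; therefore $x\mapsto\varphi^k(\vec{T_xL_t})$ is a constant $c(t)\in(0,1]$ belonging to that finite set. The function $t\mapsto c(t)$ is continuous, hence locally constant, so $c(t)=c(t_0)=1$ for $t$ near $t_0$ (the value $1$ being forced at $t_0$ because $L_{t_0}$ is calibrated). Since $\varphi^k$ is a calibration, $\varphi^k(\vec{T_xL_t})\equiv 1$ means every tangent plane of $L_t$ is a $\varphi^k$-calibrated plane, i.e. $L_t$ is $\varphi^k$-calibrated and $t\in S$.

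\textbf{Closedness} is soft: if $t_n\in S$ and $t_n\to t_\infty$, then $f_{t_n}^*\varphi^k=\mathrm{vol}(f_{t_n}^*g)\to\mathrm{vol}(f_{t_\infty}^*g)$ while $f_{t_n}^*\varphi^k\to f_{t_\infty}^*\varphi^k$ in $C^0(L)$, whence $f_{t_\infty}^*\varphi^k=\mathrm{vol}_{L_{t_\infty}}$ and $t_\infty\in S$. The only genuinely nontrivial ingredient is the one already used for \eqref{eq:const}: that the critical values of a normalized parallel calibration on the Grassmannian of decomposable $k$-vectors form a finite set, uniformly in the base point — so I expect no real obstacle beyond invoking the classification of parallel forms correctly. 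The minor point to keep an eye on is that the orientation used to define $\vec{T_xL_t}$ vary consistently with $t$, which is automatic the moment $\varphi^k|_{L_t}$ is nowhere zero.
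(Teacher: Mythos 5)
Your argument is correct and is essentially the one the paper intends: the corollary is stated as an immediate consequence of Theorem~\ref{thm:crit}, whose proof already contains the key fact (equation~\eqref{eq:const}) that on a $\hat\varphi^k$-submanifold the value $\varphi^k(\vec{T_xL})$ is a constant lying in the discrete set of critical values of $\tilde\varphi^k$. You simply make explicit the continuity-in-$t$ plus discreteness (open--closed--connected) step that the paper leaves implicit, which is a faithful and slightly more careful rendering of the same proof.
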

\begin{proof}  Let $\varphi$ be a  calibration and let $L_t$, $ t\in [0,1] $, be  a continuous  family of closed  $\hat \varphi$-submanifolds  such that $L_0$ is   a  $\varphi$-calibrated  submanifold.  Then  $\varphi_{|L_0} \not = 0$  and therefore  $\varphi_{|L_t} \not = 0$ for    sufficiently small  $t$.  By   Theorem \ref{thm:crit}, $L_t$  is also  a minimal submanifold  for such small $t$, in particular  the volume of $L_t$  is  constant  around $t =0$. Since	$L_0$ is a calibrated  submanifold, it follows that the $L_t$ are  calibrated  submanifolds  for  all sufficiently small  $t$.  Then the set  of  all values $t$   such that $L_t$ is a $\varphi$-calibrated submanifold  is  an open  subset  in the interval $[0,1]$.   On the other hand,   since  the volume function  is continuous,   the set  of  all values  $t$ such that $L_t$  is a $\varphi$-calibrated
	submanifold  is also closed.  This   completes the proof of Corollary \ref{cor:caldeform}.
\end{proof}

\begin{example}\label{ex:psisubm}

1. By Remark \ref{rem:prn} (1),  each $\varphi$-calibrated  submanifold  is a
$\hat \varphi$-submanifold. In particular,  every  associative submanifold
$L^3$ in  a $G_2$-manifold  $M^7$
is a $\hat \varphi$-submanifold, where $\varphi$ is the associative 3-form on $M^7$.  We claim that every  3-dimensional $\hat \varphi$-submanifold is an associative submanifold.  To prove  this assertion we   regard $\hat \varphi \in \Om^2 (M^7, TM^7)$ as the
2-fold cross  product  $TM^7 \times  TM^7 \to TM^7$:   $\varphi (X, Y, Z) = \la X \times Y, Z \ra$  where $\times $ denotes the cross product \cite{HL1982, KLS2017a}. Then our assertion follows from the first cousin principle for  $\hat \varphi$-submanifolds
and the   observation  that  a 3-plane is associative if
and only if  it is invariant under the 2-fold cross
product \cite{HL1982}.

2. Let us consider  a complex  manifold $(M, g, J)$. We regard $J$  as an element in $\Om^1(M, TM)$. Clearly
 a submanifold  $L$  in  $M$ is  a $J$-submanifold if and only if it is a complex  submanifold.
 
 3. Let $* \varphi$ be the  coassociative 4-form  on a  $G_2$-manifold
 $M^7$. The  associated form  $\widehat{* \varphi} \in  \Om^3 (M^7, TM^7)$  is often denoted by $\chi$ and   called the  3-fold cross  product \cite{HL1982, KLS2017a}.
 
 (a) It is shown  in the proof of Lemma  5.6  in \cite{KLS2018} that
 a  3-submanifold  $L^3 \subset M^7$  is a   $\chi$-submanifold if  and only if it is an associative submanifold.  
 
 (b) By Remark \ref{rem:prn} (1),
 every coassociative  submanifold $L^4$  is a $\chi$-sub\-ma\-ni\-fold.  We claim that  a   4-dimensional $\chi$-submanifold is
  a coassociative  submanifold.  To prove this it suffices to show that  the coassociative planes (up to orientation) are the only  critical points
  of the function  $\widetilde{*\varphi}$  defined in  Remark \ref{rem:prn} (1).   This assertion is equivalent  to   the statement that
  the associative  planes (up to orientation) are the only  critical  points of the  function $\tilde {\varphi}$, which has   been  proved in  Example \ref{ex:psisubm} 1.

  It is not hard to  conclude from  (a) and (b)  that a  $\chi$-submanifold  in   a $G_2$-manifold  is either an  associative or  a coassociative submanifold. Thus,  we regard  $\chi$  as an analogue  of the  complex  form $J \in \Om^1(M, TM)$ in  complex  geometry.
  In \cite{KLS2017a} Kawai-L\^e-Schwachh\"ofer   gave another interpretation  of this fact, proving   that  a $G_2$-structure is torsion-free if and only if   $[\chi, \chi]^{ FN}$ vanishes.
 
 4.  Let $\alpha : = Re (vol_\C)$  be  the SL-calibration on
 a Calabi-Yau manifold $(M, g, \om, vol_\C)$. Remark \ref{rem:prn} (1) implies
 that every special Lagrangian submanifold  $L\subset M$  is a $\hat \alpha$-submanifold.

  5. Let $M^7$ be a $G_2$-manifold and $\varphi$ the defining  associative  3-form.  In  \cite{LV2017} L\^e-Van\v zura define  a  form $\tau \in  \Om^4(M^7, TM^7)$  as follows.   For  $x, y, z, w \in TM^7$  we set (\cite[(1.17),  Theorem 1.18,  p. 117]{HL1982}, see  also  \cite[Remark 4.2]{LV2017})
\begin{equation}
 \tau(x, y, z, w): = -(\varphi(y, z, w) x+ \varphi(z, x, w) y+ \varphi(x, y, w) z+ \varphi(y,x,z) w).\label{eq:tau7}
\end{equation}
Then any $4$-submanifold  in $M^7$  is a $\tau$-submanifold.

6. Let $M^8$ be a  Spin(7)-manifold and $\psi^4$  its  defining Cayley  form.  Recall that   $\psi^4(X, Y, Z, W) = \la P(X, Y, Z), W\ra$ where
$P$ is the  3-fold vector cross product, see e.g. \cite{Fernandez1986}.
By Remark \ref{rem:prn}(1), every Cayley submanifold  is a $\hat \psi^4$-submanifold.  
Since  any $\hat \psi^4$-submanifold  $L$ is  invariant  under  the triple  product $P$, $L$ must be a Cayley  submanifold. 

7. Let  $G$ be a compact Lie group  provided with  the  Killing metric. Denote by  $\om^3$  the Cartan 3-form on $G$. The calibration $\om^3$ has been first considered by Dao in \cite{Dao1977} and later  by Tasaki \cite{Tasaki1985}. By Theorem 3.1 in \cite{Le1990} any  3-dimensional  Lie subgroup in
$G$ is a $\hat \om$-submanifold. Since     the  tangent  space $T_e G$ is  invariant under  the Lie bracket, any  Lie subgroup in  $G$ is  a $\hat \om$-submanifold.
  In   \cite[Section 3]{Le1990} L\^e classified  stably minimal  3-dimensional subgroups in compact  semi-simple  Lie groups of classical type, see also \cite{Le1990b} for the classification of all
  stably minimal simple Lie subgroups in classical Lie groups. Clearly,
non-stably  minimal Lie  subgroups   cannot be  calibrated submanifolds, since  calibrated submanifolds are  area-minimizing, and hence   stably minimal.

8. Let $\theta^3, \theta ^5, \cdots, \theta^{2m-1}$ be  bi-invariant forms on $SU(m)$. By Theorem  3.4 in \cite{Le1990} for any
$n< m$   the     standard  subgroup  $SU(n) \subset SU(m)$ is a $\hat\phi$-submanifold  for
$\phi = \theta^1 \wedge \cdots  \wedge \theta^{2n-1}$.
\end{example}

         \section{The $L_\infty$-algebra  associated  to a $\Psi$-submanifold}\label{sec:linfty}

We  use  the  notation  in  the previous  sections, in particular,  $NL$  denotes  the normal   bundle  of   a
	submanifold  $L$ in a manifold  $M$.
In this section, using Voronov's derived bracket construction \cite{Voronov2005},
we prove the following.

\begin{theorem}\label{thm:linfty}  Let  $\Psi \in \Om ^{*}(M, TM)$ be an odd degree element which is square-zero, i.e., such that  $[ \Psi, \Psi]^{FN} = 0$,
 and  let $L$  be a $ \Psi$-submanifold. Then the cochain complex $\Om^*(L,NL)[-1]$  carries a canonical  $\Z_2$-graded $L_\infty$-algebra structure. 
If $\deg\Psi =1$  then this $\Z_2$-graded  $L_\infty$-algebra is also a  $\Z$-graded
$L_\infty$-algebra.
\end{theorem}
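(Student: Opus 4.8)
The plan is to realize $\Om^*(L,NL)[-1]$ as the $L_\infty$-algebra arising from Voronov's higher derived bracket construction \cite{Voronov2005} applied to a suitable data package $(\frak{L}, \a, P, \Delta)$ inside the graded Lie algebra $(\Om^*(M,TM), [-,-]^{FN})$. The first step is to fix the ambient graded Lie algebra. Since the Fr\"olicher-Nijenhuis bracket lives naturally on forms on $M$ but we want an $L_\infty$-algebra on forms on $L$, I would work with the restriction: let $\frak{L}$ be a graded Lie subalgebra (or a convenient completion) containing $\Om^*(M,TM)$ together with the relevant ``infinitesimal deformation'' tensors, equipped with the shifted bracket so that the Fr\"olicher-Nijenhuis degree of an element of $\Om^p(M,TM)$ becomes $p-1$ in the $L_\infty$-grading (this is the meaning of the shift $[-1]$; note $\deg \Psi$ odd becomes even after the shift, i.e. $\Psi$ is an \emph{even} element of the shifted bracket algebra, as required for $[\Psi,\Psi]^{FN}=0$ to be a genuine $L_\infty$ Maurer-Cartan-type condition rather than automatic).

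The second step is to produce the projector. Using the tubular neighbourhood $NL \hookrightarrow M$ and the splitting $TM|_L = TL \oplus NL$ together with the projection $\pr : TM|_L \to NL$, one identifies $\Om^*(L,NL)$ with a subspace $\a \subset \frak{L}$ of the ambient algebra, and the abelian subalgebra condition (Voronov's ``$\a$ is an abelian subalgebra'' hypothesis) must be checked: the Fr\"olicher-Nijenhuis bracket of two $NL$-valued forms on $L$, projected back to $NL$-valued forms on $L$, vanishes. This is the point where one uses that $\a$ consists of forms with values in the normal directions only and that $P = \pr_* \circ (\text{restriction to } L)$ is a projection of $\frak{L}$ onto $\a$ whose kernel is a graded Lie subalgebra — the complement being tangentially-valued forms, which is closed under $[-,-]^{FN}$ precisely because a bracket of tangent-valued forms on $L$ stays tangent-valued. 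The condition $\pr\circ\Psi|_L = 0$ from Definition \ref{def:psisubm} says exactly that $\Psi \in \ker P$, so $\Psi$ is a legitimate choice of the square-zero element $\Delta$ in Voronov's setup; squaring it, $[\Psi,\Psi]^{FN}=0$ by hypothesis, so $\Delta^2 = 0$ (equivalently $\mathrm{ad}_\Psi$ is a differential).

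With $(\frak{L}, \a, P, \Delta=\mathrm{ad}_\Psi)$ in hand, Theorem 1 of \cite{Voronov2005} produces the $L_\infty$-algebra structure on $\a = \Om^*(L,NL)[-1]$ with brackets $l_n(a_1,\dots,a_n) = P[\cdots[[\Psi, a_1], a_2],\dots, a_n]^{FN}$ (suitably symmetrized); in particular $l_1 = P\,\mathrm{ad}_\Psi = \pr\circ\,[\Psi,-]^{FN}|_L$ is the promised differential on the cochain complex $\Om^*(L,NL)[-1]$, and one should check it agrees with the natural complex structure already implicit in the statement (for $\deg\Psi = 1$, e.g. $\Psi = J$, this recovers the Dolbeault-type differential). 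The final step is the $\Z$-grading refinement: when $\deg\Psi = 1$ the operator $\mathrm{ad}_\Psi$ raises Fr\"olicher-Nijenhuis degree by $1$, hence raises the $L_\infty$-degree by $1$, and each $l_n$ has the correct degree $2-n$, so the a priori only $\Z_2$-graded structure is in fact $\Z$-graded; for higher odd $\deg\Psi$ the bracket $l_n$ shifts degree by a fixed odd amount times something, which need only be consistent mod $2$, explaining why only the $\Z_2$-grading survives.

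The main obstacle I expect is the verification that $\ker P$ is a graded Lie subalgebra of $\frak{L}$ — i.e. that the Fr\"olicher-Nijenhuis bracket of two forms on $M$ whose restrictions to $L$ are tangentially valued again restricts to a tangentially valued form on $L$. This is the analogue of the classical fact that the Nijenhuis tensor obstruction lands in the normal directions, and it is the genuinely geometric input (tied to the ``first cousin principle'' used in Lemma \ref{lem:cal1}); it requires care because $[-,-]^{FN}$ involves derivatives, so one cannot argue pointwise but must use that $L$ is a submanifold (so tangent vector fields on $L$ extend and their brackets stay tangent) together with the local coordinate formula for the Fr\"olicher-Nijenhuis bracket. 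Once this locality/tangentiality lemma is established, the rest is a direct application of Voronov's construction plus bookkeeping of degrees.
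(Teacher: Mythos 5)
Your overall strategy --- Voronov's higher derived brackets applied to V-data $(\mathfrak g,\a,\iota,P,\Delta)$ built from the splitting $TM|_L=TL\oplus NL$, with $\Delta=\Psi$, the $\Psi$-submanifold condition giving $\Psi\in\ker P$, and $[\Psi,\Psi]^{FN}=0$ giving the square-zero condition --- is exactly the paper's strategy, and you correctly isolate the two algebraic verifications (that $\iota(\a)$ is abelian and that $\ker P$ is an FN-subalgebra). But there are two genuine gaps. First, your choice of ambient graded Lie algebra does not work as described: there is no embedding of $\Om^*(L,NL)$ into $\Om^*(M,TM)$, canonical or otherwise, because a form on $L$ with values in $NL$ cannot be promoted to a tensor on $M$ without extending it off $L$. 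The paper resolves this by taking the ambient algebra to be $\Om^*(NL,TNL)$ (forms on the total space of the normal bundle), where $\iota_L(\om\otimes X)=\pi^*(\om)\otimes\hat X$ is genuinely canonical ($\pi^*$ along the bundle projection, $\hat X$ the vertical lift), and by transporting $\Psi$ there as $\Delta_{L,\tau}=\tau^*\Psi$ for a tubular neighborhood $\tau:NL\to U\subset M$. Your phrase ``a convenient completion containing $\Om^*(M,TM)$ together with the relevant infinitesimal deformation tensors'' gestures at this but does not supply it; as written, $\a$ is not a subspace of your $\mathfrak L$ and the V-data are not defined.

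Second, once the tubular neighborhood enters (and it must, to define $\Delta$), the word ``canonical'' in the theorem requires proving that the resulting $L_\infty$-structure is independent of $\tau$ up to $L_\infty$-isomorphism. This is not bookkeeping: it occupies the larger half of the paper's proof. The paper splits an arbitrary change of tubular neighborhood into a reparameterization (which induces a strict isomorphism of V-data by naturality of the FN bracket, since $\psi^*\Delta_{L,\tau_0}=\Delta_{L,\tau_1}$) and an isotopy; for the latter it passes to the formal neighborhood of $L$ in $NL$, shows that $\Delta^{\mathrm{for}}_{L,\tau_0}$ and $\Delta^{\mathrm{for}}_{L,\tau_1}$ are gauge equivalent via the flow equation $\frac{d}{dt}\Delta^{\mathrm{for}}_t=[\hat\xi_t,\Delta^{\mathrm{for}}_t]^{FN}$, and invokes the Cattaneo--Sch\"atz theorem that gauge-equivalent square-zero elements intertwined by a gauge transformation preserving $\ker P$ yield $L_\infty$-isomorphic algebras. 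None of this appears in your proposal. A smaller point: the closure of $\ker P$ and the abelianness of $\iota(\a)$, which you rightly flag as the geometric crux, are not proved in this paper either but quoted from an earlier reference; your heuristic (brackets of tangent-valued extensions stay tangent-valued along $L$) is the right idea but would still need the local-coordinate argument you allude to. Your degree analysis for the $\Z$- versus $\Z_2$-grading is essentially correct.
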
 

As a corollary, taking into account Example \ref{ex:psisubm} (1),(2), we obtain the following

\begin{corollary}\label{cor:linfty} 1.  Assume that  $\varphi^l$ is a parallel  $l$-form
on a   Riemannian manifold $(M, g)$  and  $L$  is a closed  $\varphi^l$-calibrated submanifold. If $l$ is even, then  there is a canonical  $\Z_2$-graded $L_\infty$-algebra  structure on $\Om^*(L,NL)[-1]$. If $l$ is odd, then there is a canonical  $\Z_2$-graded $L_\infty$-algebra  structure on $\Om^*(L \times S^1,N(L \times S^1))[-1]$.

2. (cf. \cite{Manetti2007})  For any  closed  complex  submanifold $L$  in a  complex manifold $M$  there  is 
a  canonical $\Z$-graded  $L_\infty$-algebra structure on $\Om^*(L,NL)[-1]$.

3.  For every closed associative 
submanifold $L^3$ in a $G_2$-manifold  $(M^7, \varphi^3)$ there are canonical $\Z_2$-graded $L_\infty$-algebra structures both on $\Om^*(L^3,NL^3)[-1]$ and on $\Om^*(L^3\times S^1,N(L^3\times S^1))[-1]$.
\end{corollary}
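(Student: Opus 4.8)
The plan is to reduce the whole statement to Theorem~\ref{thm:linfty}: in every case it suffices to exhibit a square-zero element $\Psi$ of odd degree in the Fr\"olicher--Nijenhuis Lie algebra $(\Om^*(M,TM),[-,-]^{FN})$ for which the submanifold in question is a $\Psi$-submanifold, and then to quote that theorem, together with the $M\times S^1$-stabilization sketched in the Introduction whenever the natural choice of $\Psi$ has the wrong degree parity.

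For Part~1 I would take $\Psi:=\hat\varphi^k=\p_g(\varphi^k)\in\Om^{k-1}(M,TM)$. This is square-zero by Proposition~\ref{prop:parallel}, and by Lemma~\ref{lem:cal1} the closed $\varphi^k$-calibrated submanifold $L^k$ is a $\hat\varphi^k$-submanifold. If $k$ is even, then $\hat\varphi^k$ has odd degree $k-1$, and Theorem~\ref{thm:linfty} immediately gives the canonical $\Z_2$-graded $L_\infty$-structure on $\Om^*(L^k,NL^k)[-1]$. If $k$ is odd, $\hat\varphi^k$ has even degree, so instead I pass to $(M\times S^1,\,g+dt^2,\,\varphi^k\wedge dt)$: the form $\varphi^k\wedge dt$ is again a parallel calibration on the Riemannian product (the Levi-Civita connection of $g+dt^2$ is the product connection and $\nabla(dt)=0$, while comass is multiplicative on orthogonal factors), so $\widehat{\varphi^k\wedge dt}\in\Om^k(M\times S^1,T(M\times S^1))$ has odd degree $k$ and is square-zero by Proposition~\ref{prop:parallel}; moreover, since $\varphi^k|_{L^k}$ is the volume form of $L^k$ and $dt|_{S^1}$ that of $S^1$, the form $\varphi^k\wedge dt$ restricts to the volume form of the closed submanifold $L^k\times S^1$, which is therefore $(\varphi^k\wedge dt)$-calibrated and hence, by Lemma~\ref{lem:cal1}, a $\widehat{\varphi^k\wedge dt}$-submanifold. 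Theorem~\ref{thm:linfty} then yields the asserted $\Z_2$-graded $L_\infty$-structure on $\Om^*(L^k\times S^1,N(L^k\times S^1))[-1]$.

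For Part~2 I would take $\Psi:=J\in\Om^1(M,TM)$; its self-bracket $[J,J]^{FN}$ is, up to a constant, the Nijenhuis tensor of $J$, which vanishes because $M$ is a complex manifold, so $J$ is square-zero of degree~$1$, and by Example~\ref{ex:psisubm}.2 the complex submanifolds of $M$ are exactly the $J$-submanifolds. Since $\deg\Psi=1$, Theorem~\ref{thm:linfty} produces a genuine $\Z$-graded $L_\infty$-structure on $\Om^*(L,NL)[-1]$, recovering the result of \cite{Manetti2007}. For Part~3, an associative $L^3\subset M^7$ is a closed $\varphi^3$-calibrated submanifold for the parallel associative $3$-form $\varphi^3$, so the $k=3$ (odd) case of Part~1 gives the $\Z_2$-graded $L_\infty$-structure on $\Om^*(L^3\times S^1,N(L^3\times S^1))[-1]$; for the structure on $\Om^*(L^3,NL^3)[-1]$ itself I would instead use $\Psi:=\widehat{*\varphi^3}=\chi\in\Om^3(M^7,TM^7)$, which has odd degree~$3$ and is square-zero by Proposition~\ref{prop:parallel} (since $*\varphi^3$ is parallel; this is also the ``only if'' direction of the characterization in \cite{KLS2017a}), together with the fact recorded in Example~\ref{ex:psisubm}.3 that a $3$-dimensional submanifold of $M^7$ is a $\chi$-submanifold precisely when it is associative. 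Theorem~\ref{thm:linfty} then delivers the second $L_\infty$-structure.

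I do not expect a serious obstacle here: all the genuine work is already packaged into Theorem~\ref{thm:linfty} (Voronov's derived-bracket construction) and into Example~\ref{ex:psisubm}. The only points that need a few lines of verification are that $\varphi^k\wedge dt$ is a parallel calibration on the product and restricts to the volume form of $L^k\times S^1$ — routine, given $\nabla(dt)=0$ and the orthogonal product metric — and the bookkeeping of degrees, which is exactly what dictates when the $S^1$-stabilization is necessary. Accordingly I would write Part~1 in detail and present Parts~2 and~3 as short specializations.
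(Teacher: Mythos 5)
Your proposal is correct and follows essentially the same route as the paper's own proof: reduce each case to Theorem \ref{thm:linfty} via $\hat\varphi^k$ when $k$ is even, via $\widehat{\varphi^k\wedge dt}$ on $M\times S^1$ when $k$ is odd, via $J$ for complex submanifolds, and via $\chi=\widehat{*\varphi^3}$ for the second structure on an associative $3$-fold. You simply spell out the parity bookkeeping and the verification that $\varphi^k\wedge dt$ is a parallel calibration, which the paper leaves implicit.
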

\begin{proof}Let $L$ be a   closed $\varphi^l$-calibrated submanifold of the  Riemannian manifold $(M, g)$.  If $l$ is even, then  $L$ is a $\hat \varphi^l$-submanifold  of $M$. If $l$ is odd, then 
$L \times S^1$ as  a  $\widehat {\varphi^l \wedge dt}$-submanifold  of $M \times  S^1$. This proves statement {\it 1}. Statement {\it 2} is immediate, as any  complex submanifold is a $J$-submanifold. Finally, any associative  submanifold  $L^3$  of a $G_2$-manifold  $(M^7, \varphi^3)$ is  a $\varphi^3$-calibrated submanifold, so we have an $L_\infty$-structure on $\Om^*(L^3\times S^1,N(L^3\times S^1))[-1]$ by statement {\it 1}. On the other hand,  it has been   showed in  \cite{KLS2018}  that   $L^3$ is $\widehat{*\varphi^3}$-submanifold  (see Example \ref{ex:psisubm}.3). This proves statement {\it 3}.
\end{proof}

The remainder  of this section  is devoted to the proof of  Theorem \ref{thm:linfty}.
First, let us recall  Voronov's construction of a $\Z_2$-graded $L_\infty$-algebra from a set  of   V-data. 
A set of {\it $V$-data}   is  a  quintuple $(\mathfrak g, \a, \mathrm{j}, P, \triangle)$, where 
\begin{itemize}
\item $\mathfrak g = \mathfrak g_0 \oplus \mathfrak g_1$ is a $\Z_2$-graded Lie algebra (with Lie bracket $[-,-]$),
\item $\a$ is  an abelian Lie algebra;
\item $\mathrm{j}: \a \to \mathfrak g$ is a Lie algebra inclusion;
\item $P : \mathfrak g \to \a$  is a (not necessarily bracket preserving) projection, inverting $\mathrm{j}$ from the left and such that $\ker P \subseteq \mathfrak g$ is a Lie subalgebra,
\item  $\triangle  \in (\ker P) \cap \mathfrak g_1$ is an element such that $[\triangle, \triangle] = 0$.
\end{itemize}

\begin{proposition}\label{prop:voronov}(\cite[Theorem 1, Corollary 1]{Voronov2005}). Let $(L, \a, \mathrm{j}, P, \triangle)$ be a set of V-data. Then $\a[-1]$ is a $\Z_2$-graded  $L_\infty$-algebra with multibrackets 
\begin{equation}\label{eq:multib}
 \mathfrak l_n(a_1, \cdots , a_n) = (-1)^\star P[\cdots [[\triangle, \mathrm{j} (a_1)], \mathrm{j}(a_2)],\cdots , \mathrm{j}(a_n)].
 \end{equation}
where 
\[
\star = (n-1)|a_1| + (n-2)|a_2| + \cdots + |a_{n-1}|  + \frac{n(n+1)}{2},
\]
and the vertical bars $|-|$ denote the degree in $\mathfrak a [-1]$.
\end{proposition}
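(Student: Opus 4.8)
The plan is to recognize the multibrackets (\ref{eq:multib}) as the Taylor coefficients of a single odd formal vector field on $\a[-1]$, and to verify all the $L_\infty$-relations simultaneously by showing that this vector field is homological. First I would set $D:=[\triangle,-]:\g\to\g$; the graded Jacobi identity together with $[\triangle,\triangle]=0$ shows at once that $D$ is an odd derivation of $[-,-]$ with $D^2=0$. The real task is to organize the iterated brackets $[\cdots[[\triangle,\iota(a_1)],\iota(a_2)],\cdots,\iota(a_n)]$ into a generating series. For a formal variable $a$ ranging over $\a$ (in the shifted degree) I would introduce
$$F(a):=e^{\ad_{\iota(a)}}\triangle=\sum_{n\ge 0}\frac{1}{n!}\ad_{\iota(a)}^{\,n}\triangle\ \in\ \g,$$
treated as a formal power series whose degree-$n$ component is, up to the sign $(-)^\star$, the bracket of (\ref{eq:multib}) with all arguments equal to $a$. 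Applying $P$ and using $P\triangle=0$ (as $\triangle\in\ker P$), the series $\hat Q(a):=PF(a)=\sum_{n\ge 1}\frac{1}{n!}\mathfrak l_n(a,\dots,a)$ is exactly the diagonal generating function of the candidate brackets. Because the $\mathfrak l_n$ are graded symmetric and we work in characteristic zero, polarization recovers the full sequence, and each $L_\infty$-relation, from its diagonal values; hence it suffices to prove that $\hat Q$ is homological, i.e. that (up to the standard sign conventions) $\frac{d}{d\epsilon}\big|_{0}\hat Q\big(a+\epsilon\,\hat Q(a)\big)=0$.

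Two structural facts drive the computation. First, since $\ad_{\iota(a)}$ is a derivation of $[-,-]$, the operator $e^{\ad_{\iota(a)}}$ is a (formal) Lie algebra automorphism, so that $[F(a),F(a)]=e^{\ad_{\iota(a)}}[\triangle,\triangle]=0$: every $F(a)$ is a Maurer--Cartan element. Second, because $\a$ is abelian and $\iota$ is a homomorphism, $[\ad_{\iota(a)},\ad_{\iota(b)}]=\ad_{\iota[a,b]}=0$, so the exponential differentiates trivially and $\frac{d}{d\epsilon}\big|_0 F(a+\epsilon b)=[\iota(b),F(a)]$. I would then take $b=\hat Q(a)=PF(a)$ and write $F(a)=\iota(\hat Q(a))+R(a)$ with $R(a)\in\ker P$, noting that both summands are odd. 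Abelianness of $\iota(\a)$ gives $[\iota(\hat Q(a)),\iota(\hat Q(a))]=0$, so
$$\frac{d}{d\epsilon}\Big|_0 F\big(a+\epsilon\hat Q(a)\big)=[\iota(\hat Q(a)),R(a)].$$

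Now I would invoke the Maurer--Cartan identity: since $\iota(\hat Q)$ and $R$ are both odd, expanding $0=[F,F]=2[\iota(\hat Q),R]+[R,R]$ yields $[\iota(\hat Q(a)),R(a)]=-\tfrac12[R(a),R(a)]$, which lies in $\ker P$ because $\ker P$ is a Lie subalgebra. Applying $P$ therefore gives $\frac{d}{d\epsilon}\big|_0\hat Q(a+\epsilon\hat Q(a))=-\tfrac12\,P[R(a),R(a)]=0$, the desired homological condition. The main obstacle --- and the only place where all four nondegeneracy axioms of the $V$-data are used at once --- is precisely this last step: the mixed bracket $[\iota(\hat Q),R]$ a priori has a nonzero $\a$-component, and one must trade it for $-\tfrac12[R,R]$ via the Maurer--Cartan property before the subalgebra property of $\ker P$ lets the projection annihilate it. The remainder is bookkeeping: matching the signs $(-)^\star$ of (\ref{eq:multib}) against the symmetric generating-function convention, and checking that when $\deg\triangle=1$ the $\ad_{\iota(a)}$-grading is preserved so the $\Z_2$-graded structure refines to a $\Z$-graded one. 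Since each relation is a finite identity in $\g$, the exponential is used purely as a formal organizing device and no convergence question arises.
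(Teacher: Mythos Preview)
The paper does not prove this proposition; it is quoted as \cite[Theorem~1, Corollary~1]{Voronov2005} and used as a black box throughout Section~\ref{sec:linfty}. There is therefore no argument in the paper to compare against. That said, your approach is essentially a streamlined version of Voronov's own proof: package the derived brackets into the gauge-transformed element $F(a)=e^{\ad_{\iota(a)}}\triangle$, observe that $[F(a),F(a)]=0$ because $e^{\ad_{\iota(a)}}$ is a formal automorphism, split $F=\iota(\hat Q)+R$ along $\g=\iota(\a)\oplus\ker P$, and use abelianness of $\iota(\a)$ together with the subalgebra property of $\ker P$ to conclude $P[\iota(\hat Q),R]=-\tfrac12 P[R,R]=0$. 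The strategy and the key computation are correct.

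The only steps you leave as ``bookkeeping'' that a complete write-up would have to spell out are: (i) the precise equivalence between the diagonal homological condition $\tfrac{d}{d\epsilon}\big|_{0}\hat Q(a+\epsilon\hat Q(a))=0$ and the full hierarchy of generalized Jacobi identities --- this is standard, but to make $F(a)$ honestly odd and to apply polarization cleanly one should regard $a$ as a degree-zero element of $\a\otimes\Lambda$ for an auxiliary graded-commutative parameter algebra $\Lambda$, a point you only allude to; and (ii) the verification that the explicit sign $(-)^\star$ in (\ref{eq:multib}) is exactly what converts the iterated bracket $[\cdots[[\triangle,\iota(a_1)],\iota(a_2)],\cdots,\iota(a_n)]$ into a graded-symmetric multibracket on $\a[-1]$ in Getzler's conventions. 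Neither is a gap in the strategy.
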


Replacing $\Z_2$ by $\Z$ in the definition of $V$-data, Formula (\ref{eq:multib}) gives  a $\Z$-graded  $L_\infty$-algebra.
A homotopy Lie theoretic interpretation of Voronov's $L_\infty$-algebra structure on $\mathfrak{a}[-1]$ can be found in \cite{Bandiera2015}.

The proof of Theorem \ref{thm:linfty} will now go through several steps. The first step consists in associating V-data to a $\Psi$-submanifold $L$ equipped with a tubular neighborhood $\tau$.

If $j: L \hookrightarrow M$ is a submanifold, a {\em tubular neighborhood of $L$ in $M$} is defined to be a  diffeomorphism
$\tau :N_\epsilon L \to U \subset M$ from an open neighborhood $N_\epsilon L \subset NL$ of $\mathbf{0}$ onto an open neighborhood of $L$ in $M$ such that
$\tau \circ {\mathbf{0}} = j$, where $\mathbf{0}\colon L\to NL$ is the zero section. Clearly, such maps exist, e.g. using the normal exponential w.r.t. some Riemannian metric on $M$. Furthermore, since we may assume that $N_\epsilon L \subset NL$ is a disc bundle and hence bundle equivalent to all of $NL$, it follows that we may w.l.o.g. replace $N_\epsilon L$ by all of $NL$. That is, from now on we shall assume that a tubular neighborhood is a diffeomorphism $\tau: NL \xrightarrow{\sim} U\subset M$.

\begin{definition} \label{def-4.4} Let    
$ \Psi \in \Om^{*}(M, TM)$ be an odd degree element with $[\Psi, \Psi]^{FN} = 0$, let $j\colon L\hookrightarrow M$ be a $ \Psi$-submanifold and $\tau : NL \to U \subset M$ be a tubular neighborhood of $L$ in $M$. 
Denote by $\pi\colon NL\to L$ the projection. The 5-tuple $(\mathfrak g_{L},  \a_{L}, \mathrm{j}_{L}, P_{L},  \triangle_{L,\tau})$ is defined as follows:
\begin{itemize}
\item The graded Lie algebra $\mathfrak{g}_{L}$ is $\Omega^*(NL,TNL)$ with the FN bracket;
\item the abelian graded Lie algebra $\mathfrak{a}_{L}$ is the graded vector space $\Omega^*(L,NL)$ endowed with the zero bracket;
\item the graded vector space morphism $\mathrm{j}_{L}\colon \mathfrak{a}_{L}\to \mathfrak{g}_{L}$ is 
defined on decomposable elements as $\mathrm{j}_{L}(\omega\otimes X)=\pi^*(\omega)\otimes \hat{X}$, where $\hat{X}$ is the canonical vertical lift of $X$ given by the natural identification $N_{\pi(x)}L\cong \ker(\pi_*\colon T_xNL\to T_{\pi(x)}L)$;
\item the graded vector space morphism $P_{L}\colon \mathfrak{g}_{L}\to \mathfrak{a}_{L}$  is the composition 
\[
\Omega^*(NL,TNL)\xrightarrow{\vert_L} \Omega^*(L,TNL\bigr\vert_L) \xrightarrow{\mathrm{pr}} \Omega^*(L,NL),
\]
where the rightmost arrow $\mathrm{pr}$ is the natural projection induced  by the  projection $TNL_{|L} \to NL$, also denoted by $\mathrm{pr}$ in  Section \ref{sec:calibrated}, by identifying $L$ with a submanifold of $NL$ via the zero section $L\hookrightarrow NL$  (equivalently, $\mathrm{pr}$ is induced by the canonical splitting $TNL\vert_L=TL\oplus NL$);
\item the element $\Delta_{L,\tau}$ in $\mathfrak{g}_{L,\tau}$ is $\Delta_{L,\tau}=\tau^*\Psi$, where 
\[\tau^*\colon \Omega^*(M,TM)\to \Omega^*(NL,TNL)
\]
 is the pullback of tensors along the local diffeomorphism $\tau$.
\end{itemize}

\end{definition}

\begin{remark}Notice that, as the notation suggests, $\Delta_{L,\tau}$ is the only component of the 5-tuple $(\mathfrak g_{L},  \a_{L}, \mathrm{j}_{L}, P_{L},  \triangle_{L,\tau})$ which actually depends on the tubular neighborhood $\tau$.
\end{remark}

\begin{proposition}\label{induces-l-infinity-algebra-structure}
The 5-tuple $(\mathfrak g_{L},  \a_{L},\mathrm{j}_{L}, P_{L},  \triangle_{L,\tau})$ associated with a $\Psi$-manifold is a 5-tuple of V-data. As a consequence the graded vector space $\a_{L}[-1]=\Omega^*(L,NL)[-1]$ carries a $\mathbb{Z}_2$-graded $L_\infty$-algebra structure induced by this data. When $\Psi$ has degree 1, this is actually a $\mathbb{Z}$-graded $L_\infty$-algebra structure. 
\end{proposition}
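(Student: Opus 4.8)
The plan is to check that the 5-ple $(\mathfrak g_L,\a_L,\iota_L,P_L,\triangle_{L,\tau})$ satisfies the five requirements in the definition of $V$-data recalled just before Proposition \ref{prop:voronov}, and then to quote that proposition. Two of these are immediate: $\mathfrak g_L=\Omega^*(NL,TNL)$ is a $\mathbb Z$-graded, hence $\mathbb Z_2$-graded, Lie algebra under $[-,-]^{FN}$ by Subsection \ref{subs:fn}, and $\a_L=\Omega^*(L,NL)$ with the zero bracket is abelian by construction. So what remains is: (i) $\iota_L$ is an injective morphism with abelian image, hence a Lie algebra inclusion; (ii) $P_L\circ\iota_L=\mathrm{id}_{\a_L}$; (iii) $\ker P_L$ is a Lie subalgebra of $\mathfrak g_L$; and (iv) $\triangle_{L,\tau}=\tau^*\Psi$ lies in $(\ker P_L)\cap(\mathfrak g_L)_1$ and satisfies $[\triangle_{L,\tau},\triangle_{L,\tau}]^{FN}=0$. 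Note (ii) re-proves injectivity of $\iota_L$ for free.

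For (i) I would evaluate $[\iota_L(\omega\otimes X),\iota_L(\eta\otimes Y)]^{FN}=[\pi^*\omega\otimes\hat X,\pi^*\eta\otimes\hat Y]^{FN}$ on decomposables and extend bilinearly. In the explicit formula for the Fr\"olicher--Nijenhuis bracket of two decomposable tangent-valued forms (see \cite[\S 8]{KMS1993}) every summand contains one of the factors $[\hat X,\hat Y]$, $\mathcal L_{\hat X}\pi^*\eta$, $\mathcal L_{\hat Y}\pi^*\omega$, $\iota_{\hat X}\pi^*\eta$, $\iota_{\hat Y}\pi^*\omega$. Since the vertical lift of a section of $NL$ is a vector field on $NL$ whose flow is fibrewise translation, $[\hat X,\hat Y]=0$; the interior products vanish because $\hat X,\hat Y$ are vertical while $\pi^*\omega,\pi^*\eta$ are pulled back from $L$; and $\mathcal L_{\hat X}\pi^*\eta=\iota_{\hat X}d\pi^*\eta+d\iota_{\hat X}\pi^*\eta=\iota_{\hat X}\pi^*d\eta+0=0$ for the same reason (and symmetrically for $\mathcal L_{\hat Y}\pi^*\omega$). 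Hence the bracket is zero. For (ii): restricting $\pi^*\omega$ to the zero section gives $\mathbf 0^*\pi^*\omega=(\pi\circ\mathbf 0)^*\omega=\omega$, while $\hat X$ restricted to $L$ lies in $\ker(T\pi)|_L$, which is exactly the $NL$-summand of $TNL|_L=TL\oplus NL$ and is identified with $X$; therefore $P_L(\iota_L(\omega\otimes X))=\omega\otimes X$.

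For (iv), $\tau^*$ preserves form degree, so $\tau^*\Psi$ is of odd degree, i.e.~lies in $(\mathfrak g_L)_1$, and $[\tau^*\Psi,\tau^*\Psi]^{FN}=\tau^*[\Psi,\Psi]^{FN}=0$ by the naturality (\ref{eq:pullback2}) of the bracket under the local diffeomorphism $\tau$. The membership $\tau^*\Psi\in\ker P_L$ is the one place where the hypothesis that $L$ is a $\Psi$-submanifold enters: from $\tau\circ\mathbf 0=j$ one gets $T\tau\circ T\mathbf 0=Tj$ along $L$, so for $w_i\in T_xL$ one has $(\tau^*\Psi)_{\mathbf 0(x)}(T\mathbf 0\,w_1,\dots)=(T\tau)^{-1}(\Psi_{j(x)}(Tj\,w_1,\dots))\in(T\tau)^{-1}(T_xL)=T\mathbf 0(T_xL)$, the middle membership holding because $\Psi|_L\in\Omega^*(L,TL)$ by Definition \ref{def:psisubm}. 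Since $T\mathbf 0(TL)$ is the $TL$-summand of $TNL|_L$, this says $\mathrm{pr}((\tau^*\Psi)|_L)=0$, i.e.~$\tau^*\Psi\in\ker P_L$.

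The genuinely non-formal step, which I expect to be the main obstacle, is (iii). One observes first that $\ker P_L=\{K\in\Omega^*(NL,TNL) : K|_L\in\Omega^*(L,TL)\}$; in local coordinates $(x^i,y^a)$ near $L$ with $L=\{y=0\}$, writing $K=\sum_i K^i\otimes\partial_{x^i}+\sum_a K^a\otimes\partial_{y^a}$, this is the condition that every ``normal'' coefficient $K^a$ lie in the ideal $\mathcal J=\ker(j^*\colon\Omega^*\to\Omega^*(L))$, namely the differential ideal generated by $y^1,\dots,y^m$ and $dy^1,\dots,dy^m$. Now $\mathcal J$ is a two-sided differential ideal that is moreover stable under $\iota_V$ and $\mathcal L_V$ for $V$ tangent to $L$, and in such coordinates all coordinate vector fields commute; so, applying the decomposable formula for $[-,-]^{FN}$ term by term, every summand of $[K,K']^{FN}$ has vector part a single coordinate field and form part built from the coefficients $K^i,K^a,K'^j,K'^b$ by $d$, $\iota_{\partial_{x^i}}$ and $\wedge$. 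A short case analysis then shows that whenever such a summand has $\partial_{y^a}$ as vector part, its form part lies in $\mathcal J$ — either because one of its factors is already a normal coefficient and $\mathcal J$ is an ideal, or because it is $d$ or $\iota_{\partial_{x^i}}$ of a normal coefficient and $\mathcal J$ is stable under $d$ and under contraction with vector fields tangent to $L$. Hence $[K,K']^{FN}\in\ker P_L$. With (i)--(iv) established, Proposition \ref{prop:voronov} endows $\a_L[-1]=\Omega^*(L,NL)[-1]$ with a $\mathbb Z_2$-graded $L_\infty$-algebra structure; when $\deg\Psi=1$ all five pieces of $V$-data are $\mathbb Z$-graded and $\triangle_{L,\tau}$ has $\mathbb Z$-degree $1$, so the $\mathbb Z$-graded form of Voronov's construction applies and yields a $\mathbb Z$-graded $L_\infty$-algebra. (That this structure does not depend, up to $L_\infty$-isomorphism, on the choice of tubular neighbourhood $\tau$ — which justifies the word \emph{canonical} in Theorem \ref{thm:linfty} — is a separate point, not needed for the present statement.)
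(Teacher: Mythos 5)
Your proof is correct, and it follows the same overall skeleton as the paper (verify the five V-data axioms, then invoke Proposition \ref{prop:voronov}), but it differs in where the substantive content lives. The paper disposes of the two nontrivial axioms --- that $\iota_L(\a_L)$ is abelian and that $\ker P_L$ is closed under $[-,-]^{FN}$ --- by introducing $\tilde P_L=\iota_L\circ P_L$ and citing \cite{KLS2018} for the facts that $\operatorname{im}\tilde P_L$ is an abelian subalgebra and $\ker\tilde P_L$ is a subalgebra; everything else ($P_L\circ\iota_L=\mathrm{id}$, $[\tau^*\Psi,\tau^*\Psi]^{FN}=\tau^*[\Psi,\Psi]^{FN}=0$ by naturality, and $P_L\triangle_{L,\tau}=0$ from the $\Psi$-submanifold condition) is treated as formal, exactly as you do. You instead prove the two cited facts from scratch: the abelianness of the image via the five-term decomposable formula for the Fr\"olicher--Nijenhuis bracket together with $[\hat X,\hat Y]=0$ and the vanishing of $\iota_{\hat X}$ and $\mathcal L_{\hat X}$ on basic forms, and the subalgebra property of $\ker P_L$ via the identification $\ker P_L=\{K: K|_L\in\Omega^*(L,TL)\}$ and a coordinate computation with the differential ideal $\mathcal J=\ker j^*$. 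Your case analysis there correctly navigates the one genuine subtlety --- contraction $\iota_{\partial_{y^b}}$ with a \emph{normal} coordinate field does not preserve $\mathcal J$, but in every such summand the companion factor $K^{y^b}$ or $dK^{y^b}$ already lies in $\mathcal J$, so the ideal property rescues the term. The trade-off is clear: the paper's proof is shorter but not self-contained, while yours makes the proposition independent of \cite{KLS2018} at the cost of an explicit (but routine) local computation. Both handle the $\Z$- versus $\Z_2$-grading point identically.
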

\begin{proof} The map $\mathrm{j}_{L}$ is injective, the map $P_{L}$ is surjective, and one manifestly has $P_{L}\circ \mathrm{j}_{L}=\mathrm{id}_{\mathfrak{a}_{L}}$ so we are left with showing $[\mathrm{j}_{L}\mathfrak{a}_{L},\mathrm{j}_{L}\mathfrak{a}_{L}]=0$, that $\ker P_{L}$ is a Lie subalgebra of $\mathfrak g_{L}$, that $\triangle_{L,\tau}\in \ker P_{L}$ and $[\triangle_{L,\tau},\triangle_{L,\tau}]=0$. To this aim, consider the composition
\[
\tilde{P}_{L}=\mathrm{j}_{L}\circ P_{L}\colon \Omega^*(NL,TNL)\to \Omega^*(NL,TNL).
\]
It is shown in  \cite{KLS2018} that the image of $\tilde{P}_{L}$ is an abelian  subalgebra of the graded Lie algebra 
$(\Om^* (N L, TNL), [-, -] ^{FN})$ and that $\ker \tilde{P}_{L}$ is closed  under  the Fr\"olicher-Nijenhuis  bracket. As $P_{L}$ is surjective, the image of $\tilde{P}_{L}$ coincides with the image of $\mathrm{j}_{L}$, so that $\mathrm{j}_{L}(\mathfrak{a}_{L})$ is an abelian subalgebra of $\mathfrak g_{L}$. As $\mathrm{j}_{L}$ is injective, we have $\ker \tilde{P}_{L}=\ker P_{L}$, and so $\ker P_{L}$ is a Lie subalgebra of $\mathfrak g_{L}$.
By the naturality of the Fr\"olicher-Nijenhuis bracket, we have 
\begin{equation}
[\triangle_{L,\tau},\triangle_{L,\tau}]=[\tau^*\Psi,  \tau^*\Psi] ^{FN} = \tau ^* [\Psi, \Psi] ^{FN}=0.\label{eq:exp}
\end{equation}
Finally, as $L$ is a $\Psi$-manifold in $M$ and $\tau$ is a diffeomorphism relative to $L$ in a neighborhood of $L$ (identified with the zero section in $NL$), we have that $L$ is a $\triangle_{L,\tau}$-manifold in $NL$. Therefore, $P_{L}\triangle_{L,\tau}=0$  
by definition of $\triangle_{L,\tau}$-manifold.
\end{proof} 

The underlying graded vector space of the $L_\infty$-algebra structure induced on $\Omega^*(L,NL)[-1]$ by Proposition \ref{induces-l-infinity-algebra-structure} is independent of $\tau$. 
Our next step will consist in showing that also the $L_\infty$-algebra structure is actually independent of $\tau$, up to isomorphism. To begin with, let us show that a reparameterization of the tubular neighborhood leaves the $L_\infty$-algebra structure unchanged up to isomorphism.

\begin{lemma} Let $\tau_0$ and $\tau_1$ be two tubular neighborhoods of $L$ in $M$ such that $\tau_1=\tau_0\circ\psi$ for some diffeomorphism $\psi$ of $NL$ relative to $L$. Then $\psi$ induces an isomorphism of V-data between $(\mathfrak{g}_{L},\mathfrak{a}_{L},\mathrm{j}_{L},P_{L},\Delta_{L,\tau_0})$ and $(\mathfrak{g}_{L},\mathfrak{a}_{L},\mathrm{j}_{L},P_{L},\Delta_{L,\tau_1})$. In particular $(\mathfrak{g}_{L},\mathfrak{a}_{L},\mathrm{j}_{L},P_{L},\Delta_{L,\tau_0})$ and $(\mathfrak{g}_{L},\mathfrak{a}_{L},\mathrm{j}_{L},P_{L},\Delta_{L,\tau_1})$ induce isomorphic $L_\infty$-algebra structures on $\Omega^*(L,NL)[-1]$.
\end{lemma}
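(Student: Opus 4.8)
The strategy is to exhibit an isomorphism of $V$-data from $(\mathfrak g_{L},\mathfrak a_{L},\iota_{L},P_{L},\Delta_{L,\tau_0})$ to $(\mathfrak g_{L},\mathfrak a_{L},\iota_{L},P_{L},\Delta_{L,\tau_1})$, by which I mean a pair $(\Phi,\phi)$ with $\Phi$ a graded Lie algebra automorphism of $\mathfrak g_{L}=\Omega^*(NL,TNL)$ and $\phi$ an invertible degree-preserving linear map of $\mathfrak a_{L}=\Omega^*(L,NL)$, such that $\Phi\circ\iota_{L}=\iota_{L}\circ\phi$, $\phi\circ P_{L}=P_{L}\circ\Phi$ and $\Phi(\Delta_{L,\tau_0})=\Delta_{L,\tau_1}$. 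This suffices for the Lemma: the Voronov multibrackets $(\ref{eq:multib})$ are built only out of the Fr\"olicher--Nijenhuis bracket, the maps $\iota_{L}$, $P_{L}$, and the element $\Delta$, so the three intertwining relations propagate to $\phi(\mathfrak l_n^{\tau_0}(a_1,\dots,a_n))=\mathfrak l_n^{\tau_1}(\phi a_1,\dots,\phi a_n)$ for all $n$ (the signs $(-)^\star$ match because $\phi$ preserves degrees); that is, $\phi[-1]$ is a strict isomorphism between the two $L_\infty$-algebra structures on $\Omega^*(L,NL)[-1]$.

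I would take $\Phi:=\psi^*$, the pullback along $\psi$. By the functoriality $(\ref{eq:pullback2})$ of the Fr\"olicher--Nijenhuis bracket this is a graded Lie algebra automorphism of $\mathfrak g_{L}$, and, exactly as in the computation $(\ref{eq:exp})$, $\psi^*\Delta_{L,\tau_0}=\psi^*\tau_0^*\Psi=(\tau_0\circ\psi)^*\Psi=\tau_1^*\Psi=\Delta_{L,\tau_1}$. For this lemma we may and do take the reparameterization $\psi$ to be a vector bundle automorphism $\bar\psi$ of $NL$ over $\mathrm{id}_L$; a general diffeomorphism of $NL$ fixing $L$ pointwise is isotopic relative to $L$ to its fibrewise linearization, so the general case follows by combining this lemma with the isotopy arguments that come next. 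Set $\phi(\omega\otimes X):=\omega\otimes\bar\psi^{-1}X$ on decomposables, which is evidently invertible and degree-preserving. Since $\bar\psi$ preserves the fibres of $\pi\colon NL\to L$ we have $\pi\circ\psi=\pi$, hence $\psi^*(\pi^*\omega)=\pi^*\omega$; and since $\bar\psi$ is fibrewise linear its differential carries the vertical lift $\hat X$ to the vertical lift $\widehat{\bar\psi^{-1}X}$. As $\psi^*$ distributes over the tensor product, $\psi^*\iota_{L}(\omega\otimes X)=\psi^*(\pi^*\omega)\otimes\psi^*\hat X=\pi^*\omega\otimes\widehat{\bar\psi^{-1}X}=\iota_{L}\phi(\omega\otimes X)$, i.e.\ $\Phi\circ\iota_{L}=\iota_{L}\circ\phi$. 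Next, $\psi^*$ stabilizes $\ker P_{L}$: for $x\in L$ and $v_1,\dots,v_k\in T_xL$ one has $T_x\psi\cdot v_i=v_i$ because $\psi|_L=\mathrm{id}_L$, so by $(\ref{eq:pullback})$, $(\psi^*K)_x(v_1,\dots,v_k)=(T_x\psi)^{-1}K_x(v_1,\dots,v_k)$, which lies in $T_xL$ whenever $K\in\ker P_{L}$ (as then $K_x(v_1,\dots,v_k)\in T_xL$ and $(T_x\psi)^{-1}$ preserves $T_xL$); hence $\psi^*K\in\ker P_{L}$. Finally, $\mathfrak g_{L}=\iota_{L}(\mathfrak a_{L})\oplus\ker P_{L}$ is a formal consequence of $P_{L}\circ\iota_{L}=\mathrm{id}$ (Proposition \ref{induces-l-infinity-algebra-structure}); applying $P_{L}$ to $\Phi\circ\iota_{L}=\iota_{L}\circ\phi$ shows $\phi\circ P_{L}$ and $P_{L}\circ\Phi$ agree on $\iota_{L}(\mathfrak a_{L})$, while both vanish on $\ker P_{L}$ (the latter since $\psi^*$ stabilizes $\ker P_{L}$), so $\phi\circ P_{L}=P_{L}\circ\Phi$. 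Thus $(\psi^*,\phi)$ is an isomorphism of $V$-data, and the Lemma follows.

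The step I expect to be the genuine obstacle is the verification that $\Phi=\psi^*$ intertwines $\iota_{L}$ and $P_{L}$, equivalently that it respects the decomposition $\mathfrak g_{L}=\iota_{L}(\mathfrak a_{L})\oplus\ker P_{L}$ on which the $V$-data rests. This is precisely where one must use that $\psi$ preserves the vector bundle structure of $NL$: an arbitrary diffeomorphism of $NL$ fixing $L$ pointwise sends neither $\pi$-basic forms to $\pi$-basic forms nor vertical lifts to vertical lifts, so $\psi^*$ would not preserve $\iota_{L}(\mathfrak a_{L})$, and it is the reduction to a fibrewise linear $\psi$ that legitimately brings us into the range where the computation above runs. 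Everything else — that $\psi^*$ is a Lie algebra automorphism, that it stabilizes $\ker P_{L}$, and that it carries $\Delta_{L,\tau_0}$ to $\Delta_{L,\tau_1}$ — is immediate from the naturality of the Fr\"olicher--Nijenhuis bracket and the definitions of the $V$-data.
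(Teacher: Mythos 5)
Your proof is correct and follows the same route as the paper's: take $\Phi=\psi^*$, check that it intertwines $\iota_{L}$ and $P_{L}$ (the paper merely displays these as commutative diagrams without comment), and compute $\psi^*\Delta_{L,\tau_0}=(\tau_0\circ\psi)^*\Psi=\tau_1^*\Psi=\Delta_{L,\tau_1}$. Your observation that commutativity of the $\iota_{L}$-diagram genuinely uses that $\psi$ is a vector bundle automorphism of $NL$ --- which is what the tubular-neighborhood uniqueness theorem actually supplies, and to which one can always reduce by a linearization isotopy handled by the next lemma --- is a point the paper passes over in silence and is worth making explicit.
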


\begin{proof} As $\psi$ is a diffeomorphism of $NL$ relative to $L$ the pullback along $\psi$ induces  commutative diagrams
\[
\xymatrix{
\Omega^*(L,NL)\ar[r]^-{\mathrm{j}_{L}}\ar[d]^{\psi^*} &\Omega^*(NL,TNL)\ar[d]^{\psi^*}\\
\Omega^*(L,NL)\ar[r]^-{\mathrm{j}_{L}} &\Omega^*(NL,TNL)
}
\quad;\qquad
\xymatrix{
\Omega^*(NL,TNL)\ar[r]^-{P_{L}}\ar[d]^{\psi^*} &\Omega^*(L,NL)\ar[d]^{\psi^*}\\
\Omega^*(NL,TNL)\ar[r]^-{P_{L}} &\Omega^*(L,NL)
}
\]
Finally, we have 
\[
\psi^*\Delta_{L,\tau_0}=(\tau_0^{-1}\circ\tau_1)^*(\tau_0^*\Psi)=\tau_1^*\Psi=\Delta_{L,\tau_1}.
\]
\end{proof}

 In order to prove that the $L_\infty$-algebra structure $\Omega^*(L,NL)[-1]$ is generally independent of $\tau$, up to isomorphism, as we can not directly compare two distinct tubular neighborhoods of $L$ in $M$ it is convenient to pass to formal neigborhoods.

\begin{definition} Let    
$ \Psi \in \Om^{*}(M, TM)$ be an odd degree element with $[\Psi, \Psi]^{FN} = 0$, let $L\subset M$ be a $ \Psi$-submanifold and $\tau : NL \to U \subset M$ be a tubular neighborhood of $L$ in $M$.  Finally, let $NL_{\mathrm{for}}\hookrightarrow NL$ be the formal neighborhood 
of $L$ in $NL$ via the zero section embedding $s_0\colon L\hookrightarrow NL$. 
We recall that working in the formal neighborhood of $L$ means working only with \emph{$\infty$-jets} (of functions, sections, etc.) transverse to $L$ (see, e.g., \cite[Section 4.1]{CS2008}, were a similar situation is discussed in details). In the same notation as Proposition \ref{induces-l-infinity-algebra-structure}, the 5-tuple $(\mathfrak g_{L}^{\mathrm{for}},  \a_{L}^{\mathrm{for}}, P_{L}^{\mathrm{for}}, \mathrm{j}_{L}^{\mathrm{for}}, \triangle_{L,\tau}^{\mathrm{for}})$ is the restriction to $NL_{\mathrm{for}}$ of the 5-tuple $(\mathfrak g_{L},  \a_{L}, P_{L}, \mathrm{j}_{L}, \triangle_{L,\tau})$.
\end{definition}

\begin{remark}
Notice that the graded abelian Lie algebras $\a_{L}$ and $\a_{L}^{\mathrm{for}}$ actually coincide: they both are the graded vector space $\Omega^*(L,NL)$ endowed with the zero bracket. In particular the restriction to $NL_{\mathrm{for}}$ is the identity morphism on $\Omega^*(L,NL)$.
\end{remark}

\begin{proposition}\label{proposition-restriction}
The 5-tuple $(\mathfrak g_{L}^{\mathrm{for}},  \a_{L}^{\mathrm{for}}, \mathrm{j}_{L}^{\mathrm{for}}, P_{L}^{\mathrm{for}}, \triangle_{L,\tau}^{\mathrm{for}})$ is a set of V-data and so induces a $\mathbb{Z}_2$-graded $L_\infty$-algebra structure on $\a_{L}^{\mathrm{for}}[-1]=\Omega^*(L,NL)[-1]$. Moreover this  $\mathbb{Z}_2$-graded $L_\infty$-algebra structure coincides with that induced on $\Omega^*(L,NL)$ by the V-data $(\mathfrak g_{L},  \a_{L}, P_{L}, \mathrm{j}_{L}, \triangle_{L,\tau})$.
\end{proposition}

\begin{proof}
The proof follows analogous lines as those of \cite[Sections 4.1]{CS2008}, and we leave the obvious translation to the reader.
\end{proof}

\begin{lemma} \label{lemma-gauge-equivalent} Let $\tau_0$ and $\tau_1$ be two isotopic tubular neighborhoods of $L$ in $M$. Then $\triangle_{L,\tau_0}^{\mathrm{for}}$ and $\triangle_{L,\tau_1}^{\mathrm{for}}$ are gauge equivalent square-zero elements in $\mathfrak g_{L}^{\mathrm{for}}$.  In particular the V-data $(\mathfrak g_{L}^{\mathrm{for}},  \a_{L}^{\mathrm{for}}, \mathrm{j}_{L}^{\mathrm{for}}, P_{L}^{\mathrm{for}}, \triangle_{L,\tau_0}^{\mathrm{for}})$ and $(\mathfrak g_{L}^{\mathrm{for}},  \a_{L}^{\mathrm{for}},  \mathrm{j}_{L}^{\mathrm{for}}, P_{L}^{\mathrm{for}}, \triangle_{L,\tau_1}^{\mathrm{for}})$ induce isomorphic $L_\infty$-algebra structures on $\mathfrak{a}_L^{\mathrm{for}}[-1]=\Omega^*(L,NL)[-1]$.
\end{lemma}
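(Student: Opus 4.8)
The plan is to build, on the formal neighborhood $NL_{\mathrm{for}}$, an explicit gauge transformation carrying $\triangle_{L,\tau_0}^{\mathrm{for}}$ to $\triangle_{L,\tau_1}^{\mathrm{for}}$ out of the isotopy of tubular neighborhoods, and then to invoke the general fact that a gauge equivalence between the square-zero elements feeding Voronov's construction is promoted to an $L_\infty$-isomorphism. The first move is to fix a smooth family $\{\tau_t\}_{t\in[0,1]}$ of tubular neighborhoods of $L$ in $M$ interpolating between $\tau_0$ and $\tau_1$ and to set $\psi_t:=\tau_0^{-1}\circ\tau_t$. On $NL$ itself this composition need not make global sense, since the images $\tau_t(NL)\subset M$ wander around in $M$; but it is defined on some open neighborhood of the zero section and fixes $L$ pointwise (as $\tau_s\circ\mathbf 0=j$ for all $s$), so after restriction it becomes a smooth family of formal diffeomorphisms $\psi_t\colon NL_{\mathrm{for}}\to NL_{\mathrm{for}}$ relative to $L$, with $\psi_0=\mathrm{id}$. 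This is precisely the reason the argument is run on $NL_{\mathrm{for}}$ rather than on $NL$.

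Next I would exploit functoriality. By (\ref{eq:pullback2}) each $\psi_t^*$ is an automorphism of the graded Lie algebra $(\mathfrak{g}_L^{\mathrm{for}},[-,-]^{FN})$, and on $NL_{\mathrm{for}}$ one has
\[
\psi_t^*\,\triangle_{L,\tau_0}^{\mathrm{for}}=(\tau_0\circ\psi_t)^*\Psi=\tau_t^*\Psi=\triangle_{L,\tau_t}^{\mathrm{for}},\qquad [\triangle_{L,\tau_t}^{\mathrm{for}},\triangle_{L,\tau_t}^{\mathrm{for}}]^{FN}=\tau_t^*[\Psi,\Psi]^{FN}=0,
\]
the last identity as in (\ref{eq:exp}). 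Since $\psi_0=\mathrm{id}$ and the path $\psi_t^*$ of automorphisms is smooth, it is generated by inner derivations: there is a time-dependent vector field $X_t$ on $NL_{\mathrm{for}}$, necessarily vanishing along $L$ because $\psi_t$ fixes $L$ pointwise, with $\tfrac{d}{dt}(\psi_t^*)=\mathrm{ad}_{X_t}\circ\psi_t^*$ (here I would use the standard fact that the Lie derivative of a tangent-valued form along a vector field is its Fr\"olicher--Nijenhuis bracket with that vector field). Differentiating the displayed identity then gives
\[
\frac{d}{dt}\,\triangle_{L,\tau_t}^{\mathrm{for}}=[X_t,\triangle_{L,\tau_t}^{\mathrm{for}}]^{FN},
\]
which is exactly the flow equation of a gauge transformation in the graded Lie algebra $\mathfrak{g}_L^{\mathrm{for}}$ (note this GLA carries no differential, so there is no $d$-term). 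Moreover $X_t\in\ker P_L^{\mathrm{for}}$, since $P_L^{\mathrm{for}}$ reads off the normal part along $L$ and $X_t|_L=0$, and $X_t$ is pro-nilpotent for the filtration by order of vanishing along $L$, so its flow genuinely exists on $NL_{\mathrm{for}}$. Integrating over $t\in[0,1]$ shows that $\triangle_{L,\tau_0}^{\mathrm{for}}$ and $\triangle_{L,\tau_1}^{\mathrm{for}}$ are gauge equivalent square-zero elements.

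It then remains to pass from this gauge equivalence to an isomorphism of the induced $L_\infty$-algebras on $\a_L^{\mathrm{for}}[-1]=\Omega^*(L,NL)[-1]$. Here I would use that two sets of V-data sharing the same $(\mathfrak{g},\a,\iota,P)$ and having gauge equivalent $\triangle$'s, with a gauge parameter lying in $\ker P$, yield isomorphic $L_\infty$-structures through the formula (\ref{eq:multib}): feeding $X_t$ into the same derived-bracket expression assembles the components of an $L_\infty$-derivation whose flow over $[0,1]$ intertwines the two structures, convergence being supplied by the pro-nilpotence just noted; alternatively one can quote the homotopy Lie theoretic description of Voronov's construction from \cite{Bandiera2015}. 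I expect this final step, together with the bookkeeping that makes the flow of $X_t$ and the attendant $L_\infty$-isomorphism converge, to be the main obstacle — it is what forces the whole discussion onto $NL_{\mathrm{for}}$ and requires having on hand a precise form of the statement that a $\ker P$-valued gauge transformation lifts to an $L_\infty$-isomorphism of the associated derived-bracket algebras.
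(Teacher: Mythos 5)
Your proposal is correct and follows essentially the same route as the paper: the formal diffeomorphism $\psi_t=\tau_0^{-1}\circ\tau_t$ is exactly the paper's $\hat{\Xi}_t=\hat{\Phi}_0^{-1}\circ\hat{\Phi}_t$, the gauge-flow ODE $\tfrac{d}{dt}\triangle_t^{\mathrm{for}}=[X_t,\triangle_t^{\mathrm{for}}]^{FN}$ is obtained the same way via the Lie-derivative/Fr\"olicher--Nijenhuis identity, and the final step is the same appeal to the Cattaneo--Sch\"atz theorem. Your explicit check that $X_t\in\ker P_L^{\mathrm{for}}$ (needed to apply that theorem) and the remark on pro-nilpotence are welcome clarifications that the paper leaves implicit, but they do not change the argument.
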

\begin{proof}
By definition of isotopic tubular neighborhoods, there exist a smooth family $\Phi_t$ of maps $\Phi_t\colon NL\to M$, with $t\in [0,1]$, which are diffeomorphisms on their images and such that $\Phi_t\circ s_0=j$ for every $t\in [0,1]$, such that $\Phi_0=\tau_0$ and $\Phi_1=\tau_1$. Let $\hat{\Phi}_t$ be the composition of $\Phi_t$ with the embedding $NL_{\mathrm{for}}\hookrightarrow NL$ of the formal neighborhood $NL_{\mathrm{for}}$ of $L$ into $NL$. Then $\hat{\Phi}_t$ is a formal diffeomorphism between $NL_{\mathrm{for}}$ and the formal neighborhood $\hat{L}_M$ of $L$ inside $M$. Let 
$\Delta^{\mathrm{for}}_t=\hat{\Phi}_t^*(\Psi\bigr\vert_{\hat{L}_M})$. Then $\Delta^{\mathrm{for}}_0=\Delta^{\mathrm{for}}_{L,\tau_0}$ and $\Delta^{\mathrm{for}}_1=\Delta_{L,\tau_1}$. Moreover, writing $\hat{\Xi}_t$ for the formal diffeomorphism of $NL_{\mathrm{for}}$ relative to $L$ given by $\hat{\Xi}_t=\hat{\Phi}_0^{-1}\circ \hat{\Phi}_t$ we have
\[
\Delta^{\mathrm{for}}_t=\hat{\Phi}_t^*(\hat{\Phi}_0^{-1})^*\hat{\Phi}_0^*(\Psi\bigr\vert_{\hat{L}_M})=\hat{\Xi}_t^*\Delta^{\mathrm{for}}_0
\]
As $\hat{\Xi}_0=\mathrm{id}_{NL_{\mathrm{for}}}$, differentiating the above equation with respect to $t$ we find
\[
\frac{d}{dt}\Delta^{\mathrm{for}}_t=\mathcal{L}_{\hat{\xi}_t}\Delta^{\mathrm{for}}_t,
\]
where $\mathcal{L}_{\hat{\xi}_t}\Delta^{\mathrm{for}}_t$ is the Lie derivative of the tensor field $\Delta^{\mathrm{for}}_t$ with respect to the vector field $\hat{\xi}_t=\frac{d}{dt}\hat{\Xi}_t$. For every $t$, the vector field $\hat{\xi}_t$ is an element in $\Omega^0(NL_{\mathrm{for}},TNL_{\mathrm{for}})=(\g_L^{\mathrm{for}})_0$. Moreover, $\mathcal{L}_{\hat{\xi}_t}\Delta_t=[\hat{\xi}_t,\Delta^{\mathrm{for}}_t]^{FN}$. Thus, the family of elements $\Delta_t^{\mathrm{for}}$ satisfies
\[
\begin{cases}
\frac{d}{dt}\Delta_t^{\mathrm{for}}=[\hat{\xi}_t,\Delta_t^{\mathrm{for}}]^{FN}\\
\\
\Delta_0^{\mathrm{for}}=\Delta_{L,\tau_0}^{\mathrm{for}}\\
\\
\Delta_1^{\mathrm{for}}=\Delta_{L,\tau_1}^{\mathrm{for}}
\end{cases}
\]
and it is therefore a gauge equivalence between $\Delta_{\tau_0}^{L,\mathrm{for}}$ and $\Delta_{L,\tau_1}^{\mathrm{for}}$ in $\g_L^{\mathrm{for}}$. The final part of the statement follows from the following  

\begin{proposition}[Cattaneo \& Sch\"atz, cf.~{\cite[Theorem 3.2]{CS2008}}]\label{theor:CS}
Let $(\mathfrak g,\mathfrak a, \mathrm{j},  P, \Delta_0)$ and $(\mathfrak g, \mathfrak a, \mathrm{j},  P, \Delta_1)$ be $V$-data, and let $\mathfrak a[-1]_0$ and $\mathfrak a[-1]_1$ be the associated $L_\infty$-algebras. If $\Delta_0$ and $\Delta_1$ are gauge equivalent and they are intertwined by a gauge transformation preserving $\ker P$, then $\mathfrak a[-1]_0$ and $\mathfrak a[-1]_1$ are $L_\infty$-isomorphic.
\end{proposition}

\end{proof}
\begin{corollary}\label{corollary-isomorphism2}
Let $\tau_0$ and $\tau_1$ be two isotopic tubular neighborhoods of $L$ in $M$. Then the V-data $(\mathfrak g_{L},  \a_{L}, \mathrm{j}_{L}, P_{L},  \triangle_{L,\tau_0})$ and $(\mathfrak g_{L},  \a_{L}, P_{L}, \mathrm{j}_{L}, \triangle_{L,\tau_1})$ induce isomorphic $L_\infty$-algebra structures on $\Omega^*(L,NL)[-1]$.
\end{corollary}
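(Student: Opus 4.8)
The plan is to transport the $L_\infty$-isomorphism we already possess at the level of formal neighborhoods back to the honest, non-formal V-data, exploiting the fact that passage to the formal neighborhood does not alter the induced $L_\infty$-structure.

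First I would invoke Corollary \ref{corollary-restriction}: for each $i\in\{0,1\}$, the $L_\infty$-algebra structure on $\Omega^*(L,NL)[-1]$ induced by the V-data $(\mathfrak g_{L}, \a_{L}, \iota_{L}, P_{L}, \triangle_{L,\tau_i})$ coincides \emph{on the nose} with the one induced by the formal V-data $(\mathfrak g_{L}^{\mathrm{for}}, \a_{L}^{\mathrm{for}}, \iota_{L}^{\mathrm{for}}, P_{L}^{\mathrm{for}}, \triangle_{L,\tau_i}^{\mathrm{for}})$. Morally this holds because the Voronov multibrackets \eqref{eq:multib} are assembled from iterated Fr\"olicher--Nijenhuis brackets followed by restriction to $L$, and the FN bracket is a bidifferential operator, so the output along $L$ depends only on the $\infty$-jet along $L$ of its inputs; at the level of the statement, $\a_{L}$ and $\a_{L}^{\mathrm{for}}$ are literally the same graded vector space $\Omega^*(L,NL)$ and the restriction is the identity on it.

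Second, since $\tau_0$ and $\tau_1$ are isotopic by hypothesis, Lemma \ref{lemma-gauge-equivalent} applies and gives that $\triangle_{L,\tau_0}^{\mathrm{for}}$ and $\triangle_{L,\tau_1}^{\mathrm{for}}$ are gauge equivalent square-zero elements of $\mathfrak g_{L}^{\mathrm{for}}$, the gauge equivalence being generated by the vertical vector fields $\hat\xi_t$; since the intertwining formal diffeomorphisms $\hat\Xi_t$ are relative to $L$ one has $\hat\xi_t\vert_L=0$, hence $\hat\xi_t\in\ker P_{L}^{\mathrm{for}}$, so the gauge transformation preserves $\ker P_{L}^{\mathrm{for}}$. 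Proposition \ref{theor:CS} then produces an $L_\infty$-isomorphism between the two formal $L_\infty$-structures.

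Composing the three identifications, the honest $L_\infty$-structure attached to $\tau_0$ equals the formal one for $\tau_0$ (Corollary \ref{corollary-restriction}), which is $L_\infty$-isomorphic to the formal one for $\tau_1$ (Lemma \ref{lemma-gauge-equivalent} together with Proposition \ref{theor:CS}), which in turn equals the honest one for $\tau_1$ (Corollary \ref{corollary-restriction} again); this chain is exactly the claimed isomorphism. I do not expect a genuine obstacle here: all the analytic and homotopical content has already been absorbed into Lemma \ref{lemma-gauge-equivalent} and Proposition \ref{theor:CS}, and the argument is pure bookkeeping. The only point deserving a moment's care is the verification, indicated above, that the gauge transformation of Lemma \ref{lemma-gauge-equivalent} preserves $\ker P_{L}^{\mathrm{for}}$, which is what licenses the use of Proposition \ref{theor:CS}.
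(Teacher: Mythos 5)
Your proposal is correct and follows exactly the paper's route: the paper's proof of this corollary is literally ``Immediate from Corollary \ref{corollary-restriction} and Lemma \ref{lemma-gauge-equivalent}.'' Your extra check that the gauge transformation preserves $\ker P_{L}^{\mathrm{for}}$ (because the $\hat\xi_t$ vanish along $L$, hence lie in the Lie subalgebra $\ker P_{L}^{\mathrm{for}}$) is a hypothesis of Proposition \ref{theor:CS} that the paper leaves implicit, so including it is a welcome refinement rather than a deviation.
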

\begin{proof}
Immediate from Proposition \ref{proposition-restriction} and Lemma \ref{lemma-gauge-equivalent}.
\end{proof}

Putting Proposition \ref{proposition-restriction} and Corollary \ref{corollary-isomorphism2} together, we obtain the following statement, which is a rephrasing of Theorem \ref{thm:linfty}.
\begin{proposition}
Let $\Psi \in \Om ^{*}(M, TM)$ be an odd square-zero  element, 
 and  $L$  a $ \Psi$-submanifold of $M$. Then the $\Z_2$-graded $L_\infty$-algebra structure on $\Om^*(L,NL)[-1]$ induced by the V-data $(\mathfrak g_{L},  \a_{L}, \mathrm{j}_{L}, P_{L},  \triangle_{L,\tau})$ is independent of the tubular neighborhood $\tau$, up to isomorphism.
\end{proposition}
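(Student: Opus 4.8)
The plan is to reduce the statement to the two special cases already handled, using the uniqueness of tubular neighborhoods. First note that the underlying graded vector space of the $L_\infty$-algebra that Proposition \ref{prop:voronov} builds from the V-data $(\mathfrak g_{L},\a_{L},\iota_{L},P_{L},\triangle_{L,\tau})$ is $\Om^*(L,NL)[-1]$ regardless of $\tau$; only the multibrackets $\mathfrak l_n$ can depend on $\tau$, through $\triangle_{L,\tau}=\tau^*\Psi$. So for two tubular neighborhoods $\tau_0,\tau_1$ of $L$ in $M$ I need to produce an $L_\infty$-isomorphism intertwining the two collections of multibrackets. Two moves on the tubular neighborhood are already known to preserve the $L_\infty$-structure up to isomorphism: reparameterization $\tau\mapsto\tau\circ\psi$ by a diffeomorphism $\psi$ of $NL$ relative to $L$ (the reparameterization Lemma preceding Corollary \ref{corollary-restriction}), and isotopy of tubular neighborhoods (Corollary \ref{corollary-isomorphism2}). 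Hence it is enough to connect $\tau_0$ to $\tau_1$ by a finite chain of such moves.

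Concretely, I would first invoke the uniqueness part of the tubular neighborhood theorem (Hirsch, \emph{Differential Topology}, Ch.~4; Bredon, \emph{Topology and Geometry}, Ch.~II) to find a diffeomorphism $\psi$ of $NL$ relative to $L$ such that $\tau_0\circ\psi$ and $\tau_1$ are isotopic tubular neighborhoods, i.e.~joined by a smooth family $\Phi_t\colon NL\to M$, $t\in[0,1]$, of diffeomorphisms onto their images with $\Phi_t\circ s_0=j$ and $\Phi_0=\tau_0\circ\psi$, $\Phi_1=\tau_1$. Then I would apply the reparameterization Lemma to $\psi$ to get an isomorphism of V-data, hence an $L_\infty$-isomorphism between the structure attached to $\tau_0$ and the one attached to $\tau_0\circ\psi$; and Corollary \ref{corollary-isomorphism2} to the isotopic pair $(\tau_0\circ\psi,\tau_1)$ to get an $L_\infty$-isomorphism between the structures attached to $\tau_0\circ\psi$ and to $\tau_1$. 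Composing the two isomorphisms gives the desired isomorphism between the $L_\infty$-algebras coming from $\tau_0$ and $\tau_1$, which is the statement.

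The one genuinely non-formal ingredient is the invocation of the uniqueness theorem, and it is also where care is needed: a tubular neighborhood here is a diffeomorphism $\tau\colon NL\to U$ onto an \emph{arbitrary} open neighborhood of $L$, so the reparameterizing diffeomorphism $\psi$ and the isotopy $\Phi_t$ are a priori only controlled near the zero section, and one cannot expect $\tau_1=\tau_0\circ\psi$ on the nose with $\psi$ globally defined. The way around this — which I expect to be the main technical point — is to pass first to the formal neighborhood $NL_{\mathrm{for}}$ of $L$: by Corollary \ref{corollary-restriction} the induced $L_\infty$-structure does not change, and at the formal level ``a diffeomorphism relative to $L$'' means simply a formal automorphism, so the domain and compression subtleties evaporate, while the uniqueness argument reduces to the Whitney-type radial homotopy through the normal bundle that already appears in the proof of Lemma \ref{lemma-gauge-equivalent}. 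All the other ingredients — the commuting squares for $\iota_L$ and $P_L$ under $\psi^*$, the identity $\psi^*\triangle_{L,\tau_0}=\triangle_{L,\tau_0\circ\psi}$, the gauge-equivalence argument of Lemma \ref{lemma-gauge-equivalent}, and the Cattaneo--Sch\"atz Proposition \ref{theor:CS} — are already in place, so beyond this invocation of the uniqueness theorem the argument is purely a matter of composing known isomorphisms.
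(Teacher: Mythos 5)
Your proposal is correct and follows essentially the same route as the paper: both decompose the passage from $\tau_0$ to $\tau_1$ into a reparameterization by a diffeomorphism of $NL$ relative to $L$ plus an isotopy of tubular neighborhoods (citing the uniqueness theorem in Hirsch), and then compose the isomorphisms supplied by the reparameterization Lemma and by Corollary \ref{corollary-isomorphism2}. The technical point you flag about domains is already absorbed into the paper's Corollary \ref{corollary-isomorphism2}, which routes through the formal neighborhood exactly as you suggest.
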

\begin{proof} Given two tubular neighborhoods $\tau_0$ and $\tau_1$ of $L$ in $M$, there always exists a third tubular neighborhood  $\tilde{\tau}_1$ such that $\tau_0$ and $\tilde{\tau}_1$ are isotopic relative to $L$ and $\tilde{\tau}_1=\tau_1\circ\psi$ for a suitable diffeomorphism of $NL$ relative to $L$, see, e.g., \cite[Theorem 5.3]{H1997}. 
\end{proof}

\section{Deformations  of  $\Psi$-submanifolds}\label{sec:def}
Let $ \Psi \in \Om^{2l-1}(M, TM)$  be an odd degree, square zero element  and $L$   a closed $ \Psi$-submanifold in $M$. As in  the proof of Theorem \ref{thm:linfty}, we use a tubular neighborhood $\tau : NL \to U \subset M$ to identify the normal bundle $NL$ with an open neighborhood $U$ of $L$ in $M$, and we thus may replace $M$ by $NL$. In particular, we may regard $\Psi$ as a square zero element in  $\Om^*(NL, TNL)$.

A smooth small  deformation    of $L$ in $NL$  can be identified   with a (smooth) section $L \to NL$, i.e., with an element in $\Om^0(L, NL)$. In other words, when thinking of small deformations we implicitly identify $L$ with the image of the zero section $\mathbf 0 : L \to NL$. We say that
a section $s \colon L \to NL$  is {\it a $ \Psi$-section}, if  its image $s(L)$ is a $ \Psi$-submanifold in $NL$.
These have an elegant characterization in terms of the maps $\mathrm{j}_L: \Om^*(L, NL) \to \Om^* (NL, TNL)$ and $P_L: \Om^* (NL, TNL) \to \Om^* (L, NL)$ from Definition \ref{def-4.4}.

\begin{proposition}\label{prop:psisection} Let  $F_\Psi: \Gamma (NL) \to \Om^*(L, NL)$ be the map defined by
 \begin{equation}\label{eq:F_Psi}
 F_\Psi  (s) : = P_L ( \exp  \mathrm{j}_L(-s)^*  \Psi ).
 \end{equation}
 Then a  section $s: L \to NL$  is a  $ \Psi$-section
if and only if  $F_\Psi (s)  = 0  \in \Om^*(L, NL)$.
\end{proposition}

\begin{proof}
Let $x \in L$. We begin with two simple remarks. First of all, for $v \in T_x L$ we have
\begin{equation}\label{eq:1_LUCA}
\exp \mathrm{j}_L(s)_\ast v = T_x s \cdot v.
\end{equation}
Second, let $w \in T_{s(x)} NL$. Then $w$ can be uniquely written as $w = w_s + w_N$ where $w_s$ is tangent to $s(L)$ and $w_N$ is a tangent vector vertical with respect to projection $NL \to L$. In particular $w_N$ is the vertical lift of a, necessarily unique, vector in $N_x L$ that we denote $w_N^{\downarrow}$. Finally, we have
\begin{equation}\label{eq:2_LUCA}
\pi_{NL} \exp \mathrm{j}_L(-s)_\ast w = w_N^\downarrow,
\end{equation}
where  ${\pi^{}_{NL}}: TNL \to NL$ is the projection to the base.
Both (\ref{eq:1_LUCA}) and (\ref{eq:2_LUCA}) can be easily checked, e.g.~in local coordinates.
Now, we compute $F_\Psi (s)$ explicitly. So, let $v_1, \ldots, v_{2l-1} \in T_x L$. Then
\[
\begin{aligned}
& F_{\Psi}(s)_x (v_1, \ldots, v_{2l-1}) \\
& = P_L(\exp \mathrm{j}_L(-s)^\ast \Psi)_x (v_1, \ldots, v_{2l-1}) \\
& = \pi_{NL}^{}  \exp \mathrm{j}_L(-s)_\ast \left( \Psi_{s(x)} (\exp \mathrm{j}_L(s)_\ast v_1, \ldots, \exp \mathrm{j}_L(s)_\ast v_{2l-1})\right) \\
& = \pi_{NL}^{} \exp \mathrm{j}_L(-s)_\ast \left(\Psi_{s(x)} (T_x s \cdot v_1, \ldots, T_x s \cdot v_{2l-1}) \right) &\mathrm {by}\; (\ref{eq:1_LUCA}) \\
& = \Psi_{s(x)} (T_x s \cdot v_1, \ldots, T_x s \cdot v_{2l-1})_N^\downarrow. &\mathrm {by}\; (\ref{eq:2_LUCA})
 \end{aligned}
\] 
This shows that $F_\Psi (s)= 0$ if and only if $ \Psi (w_1, \ldots, w_{2l-1})$ is tangent to $s(L)$ for all $w_1, \ldots, w_{2l-1}$ tangent to $s(L)$, i.e.~$s(L)$ is a $\Psi$-submanifold.
\end{proof}

As a step towards our investigation of smooth deformations of a $\Psi$-submanifold $L$, in this  section we first study formal  $\Psi$-deformations of  $L$
 (Definition \ref{def:formalpsi})   and show how they are governed  by  the   Maurer-Cartan equation   of the  $L_\infty$-algebra attached to $L$  (Proposition \ref{prop:mc}). Then we   study  infinitesimal and smooth deformations   of $\Psi$-submanifolds    and  prove the main theorem  of this section   on the local structure of the  pre-moduli space  of analytic $\Psi$-submanifolds in  an analytic  manifold  $M$  (Theorem  \ref{thm:Main-Psi}) under the condition that $\Psi$ is multi-symplectic  (Definition  \ref{def:non-degenerate}). 
 
 \subsection{Formal deformations of $\Psi$-submanifolds}\label{subs:formal}
Let $\eps$  be a formal parameter. Let us recall that a formal series $s(\eps) = \sum_{i=0}^\infty\eps ^i s_i  \in \Gamma (NL)[[\eps]], s_i \in \Gamma (NS)$  such that
$s_0 =0$ is called  {\it  a formal deformation of $L$}, and $s_1 \in \Gamma(NL)$ is called its {\em initial velocity}.

Denote by  $\X(NL)$ and $\T^{(r,s)}(NL)$ the space  of smooth vector  fields and $(r,s)$-tensor fields on  $NL$, where $\X(NL)$ is interpreted as the derivations of the (commutative) algebra of smooth functions $C^\infty(NL)$. The Lie derivative of tensor fields naturally extends to formal power series; for a {\em formal vector field } $\xi(\eps) := \sum_0^\infty \eps^i \xi_i \in \X(NL)[[\eps]]$ and a {\em formal $(r,s)$-tensor field }$T(\eps) := \sum_0^\infty \eps^i T_i \in \T^{(r,s)}(NL)[[\eps]]$ we define the {\em formal Lie derivative}
\begin{equation} \label{eq:LieDer}
\Ll_{\xi (\eps)}  T(\eps) : = \sum_{ i=0} ^\infty \eps ^k \sum_{ i+j  = k} \Ll_{\xi_i } T_j
\end{equation}
and the formal exponential acting on $\T^{(r,s)}(NL)[[\eps]]$ as
\begin{equation}\label{eq:bch}
 \exp \Ll_{\xi (\eps)} : = \sum_{n =0} ^\infty \frac{1}{n !} \Ll^n _{ \xi (\eps)}.
 \end{equation}

Any  section $s\colon L \to NL$ defines the constant vector field $\mathrm{j}_L(s)$ on $NL$: the flow on $NL$ generated by the vector field $\mathrm{j}_L(s)$ on $NL$ is given by $\Phi^t_{\mathrm{j}_L(s)} y_x = y_x + t s(x)$ for all $y_x \in N_xL$. The same applies to formal series $s(\eps) = \sum_{i=0}^\infty\eps ^i s_i  \in \Gamma (NL)[[\eps]]$.

Proposition (\ref{prop:psisection}) motivates  the following
\begin{definition}\label{def:formalpsi}  A formal   deformation $s(\eps)$ of $L$ is called {\it a $\Psi$-formal deformation},  if
$F_\Psi (s(\eps)) : = P_L (\exp  \Ll_{ \mathrm{j}_L (-s(\eps))}   \Psi) = 0 \in \Om^*(L, NL)[[ \eps]]$.

An {\it infinitesimal  $\Psi$-deformation  of $L$} is a section $s: L \to NL$ for which $F_\Psi(\eps s(L)) = O(\eps^2)$.
\end{definition}

If $s(\eps) = \sum_{i =0}^\infty \eps^i s_i$ is a formal    $ \Psi$-deformation of $L$, then its initial velocity $s_1$ evidently is an  infinitesimal  $\Psi$-deformation. Conversely, given an  infinitesimal     $ \Psi$-deformation  $s_1$,  we say that $s_1$  is {\it  unobstructed}, if  there exists  a  formal
$ \Psi$-deformation with initial velocity $s_1$. If all infinitesimal deformations are unobstructed, then we say that \emph{the formal deformation problem is unobstructed}. Otherwise it 
is  {\it obstructed} (cf.  \cite[\S  10]{LO2016}, \cite[Remark 4.8]{LOTV2014}, \cite[Definition 4.8]{LS2014}).

Recall  the multibracket $\l_n$    of the  $L_\infty$-algebra
associated  to   a $\Psi$-submanifold (Theorem \ref{thm:linfty})  has been  defined  in (\ref{eq:multib})  in Proposition \ref{prop:voronov}. 

\begin{proposition}\label{prop:mc} The  formal  $\Psi$-deformations  of  $L$ are governed  by  the  $L_\infty$-algebra $\Om^*(L, NL)$. Namely, a formal  deformation $s(\eps)$ of $L$ is a   formal $\Psi$-deformation if and only if  $-s(\eps)$ is a solution 
	of the (formal)  Maurer-Cartan  equation
	\begin{equation}\label{eq:mc}
	MC (s (\eps)) : = \sum_{n =1}^\infty \frac{1}{n!}\l_n (s(\eps), \cdots, s(\eps)) = 0.
	\end{equation}
	\begin{proof}  
	From  the definition of $\l_n$ we get
		\begin{equation}\label{eq:PLie}
		P_L(\Ll_{\mathrm{j}_L(-s)}  ^n   \Psi)  = \l_n (s, \cdots,  s), \quad n \ge 1,
		\end{equation}
		for $s \in \Gamma (NL)$. This implies the identity of formal power series 
\[
		P_L(\Ll_{\mathrm{j}_L(-s(\varepsilon))}  ^n   \Psi)  = \l_n (s(\varepsilon), \cdots,  s(\varepsilon)), \quad n \ge 1,
\]	
for any $s(\eps) = \sum_{i=0}^\infty\eps ^i s_i  \in \Gamma (NL)[[\eps]]$ 
and so
\[
MC (s (\eps)) = \sum_{n =1}^\infty  \frac{1}{n!}P_L(\Ll_{\mathrm{j}_L(-s(\varepsilon))}  ^n   \Psi) = P_L(\exp \Ll_{\mathrm{j}_L(-s(\varepsilon))}     \Psi)
\]
for any formal series $s(\eps) = \sum_{i=0}^\infty\eps ^i s_i  \in \Gamma (NL)[[\eps]]$.
	\end{proof}
\end{proposition}

\begin{corollary}\label{cor:infpsi} Let $s : L \to NL$ be a smooth section. Then $\varepsilon s$ 
is an infinitesimal $\Psi$-deformation of  $L$ if and only if $\l_1(s) =0$, i.e.,  if and only if $s \in \ker d_{\mathbf{0}} F_\Psi$, where $d_{\mathbf{0}} F_\Psi$ is the differential of $F_\Psi$ at the point $\mathbf{0}$ of $\Gamma(NL)$.
\end{corollary}
We shall denote the space of infinitesimal $\Psi$-deformations as
\begin{equation} \label{eq:JPsi}
J_\Psi(L) := \ker \l_1 = \ker d_{\mathbf{0}} F_\Psi.
\end{equation}

\subsection{Smooth and infinitesimal deformations of $\Psi$-submanifolds}\label{subs:smoothpsi}

\begin{definition}\label{def:psidef} A {\it smooth  $ \Psi$-deformation  of $L$} is a smooth one-parameter deformation $\{ s_t\}$  of the zero section of the vector bundle   $NL \to L$ such that each   section in the family is  a {\it $\Psi$-section}.
\end{definition}

Clearly  if $\{ s_t\}$  is a smooth $\Psi$-deformation, then    the section $\frac{ds_t}{dt}_{|t = 0}: L \to NL$ is an infinitesimal $ \Psi$-deformation. 
More generally, the Taylor expansion
\[
\sum_{n=0}^\infty \left(\left. \frac{d^n }{dt^n }\right\vert_{t=0} s_t\right)\varepsilon^n
\]
of a smooth $\Psi$-deformation $\{ s_t\}$ is a formal $\Psi$-deformations. Smooth obstructedness/unobstructedness are defined in a similar way  as  formal  obstructedness/unobstructedness.
 
For $\Psi \in \Om(M, TM)$    denote by $\Di_{\Psi} (M)$ the   subgroup  of the     diffeomorphism group $\Di (M)$ whose elements preserve $\Psi$.

\begin{definition}\label{def:moduli} Given $\Psi \in \Om^\ast(M, TM)$ and a homology class $\alpha \in H_l(M, {\mathbb Z})$, we  denote  by $\Mm_\Psi (\alpha)$ the   set of
all  closed $l$-dimensional $\Psi$-submanifolds representing the homology class $\alpha$ and call it {\it the pre-moduli space of $\alpha$}. The quotient  $\Mm_\Psi (\alpha) /  \Di_\Psi  (M)$  is called {\it the  moduli space  of $\Psi$-submanifolds of homology class $\alpha$}. 

Furthermore, for a $\Psi$-submanifold $L \subset M$, we denote by $\Mm_\Psi(L)$ the pre-moduli space of closed $\Psi$-submanifolds in  $M$ that are obtained   from $L$ by smooth $\Psi$-deformations, so that $\Mm_\Psi(L) \subset \Mm_\Psi([L])$.
\end{definition}

Here we will work with the pre-moduli spaces only and will not discuss the \emph{moduli problem}. But note that in most applications, $\Di_\Psi  (M)$ is a (finite dimensional) Lie group.
Under suitable analiticity and nondegeneracy conditions on $\Psi$ this will imply that the moduli
		space  of $\Psi$-submanifolds  in the connected component  of  $L$ is a finite dimensional analytic space, see Theorem \ref{thm:Main-Psi} and Remark \ref{rem:simul}.

Since locally, $\Mm_\Psi (L)$ is the set of $C^1$-small solutions of the equation $F_\Psi(s) = 0$, we shall use the tools provided in the proof of \cite[Theorem 4.9]{LS2014}, see also the pioneering paper by  Koiso \cite{Koiso1983} for a similar idea.

Being a differential operator, $F_\Psi$ extends for each $k \geq 1$
to a map denoted by the same symbol
\begin{equation} \label{eq:FPsik}
F_\Psi:  L^2_k\Om^0(L, NL) \longrightarrow  L^2_{k-1}\Om^{l-1}(L, NL)
\end{equation}
where $L^2_k\Om^l(L, NL)$ denotes the Sobolev space of $L^2_k$ $l$-forms on $L$ with values in $NL$, i.e., the completion of $\Om^l(L, NL)$ in the $L^2_k$ norm.

\begin{proposition} \label{prop:anal2k}
Let $M$ be a real analytic manifold, $\Psi \in \Om^{l-1}(M, TM)$ analytic and $L \subset M$ a closed analytic $\Psi$-submanifold. Then for each $k$, the map $F_\Psi$ from (\ref{eq:FPsik}) is analytic in a neighborhood  of the zero-form  $\mathbf{0}\in L^2_k\Om^0(L, NL)$,
\begin{equation} \label{eq:deriv-F(st)}
F_\Psi(s) = P_L (\exp \Ll_{\mathrm{j}_L(-s)} \Psi) = \sum_{n=1}^\infty \dfrac1{n!} P_L (\Ll_{\mathrm{j}_L(-s)}^n  \Psi) \stackrel{(\ref{eq:PLie})} = \sum_{n=1}^\infty \dfrac1{n!} \l_n(s, \cdots, s).
\end{equation}
\end{proposition}

\begin{proof}
We follow an approach similar to \cite{LS2014}. First  we consider   the restriction  of  the map $F_\Psi$ defined  in (\ref{eq:FPsik})  to the space  $C^k\Om^0(L,NL) \subset L_k^2 \Om^0(L,NL)$. Abusing notation,  the restriction  is also denoted by $F_\Psi$. Note that  the image  $F_\Psi (C^k \Om^0 (L, NL))$ belongs  to $C^{k-1}\Om^l(L, NL)$. Now we  consider  spaces $C^k\Om^0 (L, NL)$ and $C^{k-1}\Om^l(L, NL)$ as Banach  spaces  with $C^k$-norm  and $C^{k-1}$-norm respectively.

Choose a real analytic local trivialization $(x^i, y^r)$ of $NL$ where $(x^i)$ are the coordinates on $L$. For any $C^k$-function $f(x^i)$ in this neighbourhood we define its {\em $C^k$-norm at $x$} as
\[
\|f\|_{C^k; x} : = \sum_{ | I|  \le  k } \|(D_I s)_x \|,
\]
and likewise, for a section $s \in C^k(L, NL)$ we define its $C^k$-norm (at $x$) as
\begin{equation}\label{eq:cklocal}
\|s\|_{C^k; x} : = \sum_{ | I|  \le  k } \|(D_I s)_x \|,  \text{ and }   \| s\|_{ C^k} :  =  \sup _{ p \in L}  \|s\| _{ C^k; p}
\end{equation}
where the  sum is  taken of  all multi-indices  $I$, and $D_I$ denotes multiple partial derivatives  with respect to  the given coordinates. Observe that there is a constant $C_k$ such that for all $C^k$-functions $f$ and $g$
\begin{equation} \label{eq:D_I(fg)}
\|fg\|_{C^k; x} \leq C_k \|f\|_{C^k; x}\; \|g\|_{C^k; x},
\end{equation}

In these coordinates, $\Psi \in \Om^{2k-1}(NL, TNL)$
takes the form
 \begin{equation}\label{eq:Psi1}
\Psi = \sum_{|I|+|R| = 2k-1}  dx ^{I} \wedge dy  ^{R} \otimes \left(f^r_{I; R}(x,y)   \frac{\p}{\p y^r} + f^i_{I; R}(x,y) \frac{\p}{\p x^i}\right)
 \end{equation}
 where $I, R$ are  skew-symmetric multi-indices, and where we use the Einstein convention of summation over repeated indices. A section $s \in \Gamma(L, NL)$ and its graph $s(L) \subset NL$ are given as
 \begin{equation} \label{eq:def-s(x)}
s(x) = s^r(x) \frac{\p}{\p y^r} \qquad \text{and} \qquad y^r = s^r(x),
 \end{equation}
respectively, and now a straightforward calculation yields
 \begin{equation}\label{eq:fk1}
\begin{aligned}
& F_\Psi(t s)\\
& = \sum_{|I|+|R| = 2k-1} (-t)^{|R|} \left(f^r_{I; R}(x,-t s(x))  + t\; f^i_{I; R}(x, -t s(x)) \frac{\p s^r}{\p x^i} (x)\right)
 dx^I \wedge ds^{R} \otimes   \frac{\p}{\p y^r}
 \end{aligned}
 \end{equation}
 where, for $R = (r^1, \cdots, r^p)$ we set
 \begin{equation}\label{eq:s}
 ds^R = ds^{r_1} \wedge \cdots \wedge ds^{r_p} = \frac{\p s^{r_1}}{\p x^{i_1}} \cdots \frac{\p s^{r_p}}{\p x^{i_p}} 
 dx^{i_1} \wedge \cdots \wedge dx^{i_p}.
 \end{equation}
In particular,
\[
\left. \dfrac{d^n}{dt^n}\right|_{t=0} F_\Psi(t s) = \sum_{|I| = l-1} p_{I}^r\left(x; s^r, \frac{\p s^r}{\p x^i} \right) dx^I \otimes \frac{\p}{\p y^r} ,
\]
where each $p_{I}^r$ is a homogeneous polynomial of degree $n$ in the variables $\left(s^r, \frac{\p s^r}{\p x^i}\right)$ whose coefficients are linear combinations of functions of the form
\[
D_S f_{I;R}^r, D_S f_{I;R}^i, \qquad |S| \leq n,
\]
where $S = (r^1, \cdots r^n)$ is a multi-index in the $y^r$-variables only. Since $f_{I;R}^r, f_{I;R}^i$ are real analytic, it follows (cf. \cite[Proposition 2.2.10]{KP2002}) that -- after possibly shrinking the coordinate neighborhood -- there are positive constants $A, K$ such that for any multi-index $I = (i_1, \cdots i_l)$ and for all $x$, $|D_I D_S f_{I;R}^r|, |D_I D_S f_{I;R}^i| \leq n! l! A K^n K^l$ and hence,
\begin{equation} \label{eq:D_I D_S f}
\|D_S f_{I;R}^r\|_{C^k;x}, \|D_S f_{I;R}^i\|_{C^k; x} \leq n! \tilde A K^n
\end{equation}
for all $x$ and $|S| \leq n$, with fixed $\tilde A, K > 0$.
Finally,
\begin{equation} \label{eq:D_I s^r}
\|s^r\|_{C^k;x} \leq C_0 \|s\|_{C^{k}; x} \leq C_0 \|s\|_{C^{k+1}; x}, \quad  \left\| \frac{\p s^r}{\p x^i}(x)\right\| \leq C_0 \|s\|_{C^{k+1}; x}
\end{equation}
for some constant $C_0$. Therefore, as $p_{I}^r$, is a homogeneous polynomial, it follows from (\ref{eq:D_I D_S f}), (\ref{eq:D_I(fg)}) and (\ref{eq:D_I s^r}) that for all $x$,
\[
\left\|p_{I}^r\left(x; s^r(x), \frac{\p s^r}{\p x^i}(x) \right)\right\|_{C^k;x} \leq n! \tilde A (C_0 C_k K)^n \|s\|^n_{C^{k+1}; x},
\]
whence in this coordinate neighborhood
\begin{equation} \label{eq:estim-m-deriv-loc}
\left\|\left. \dfrac{d^n}{dt^n}\right|_{t=0} F_\Psi(t s)\right\|_{C^k;x} \leq n! A_0 K_0^n \|s\|^n_{C^{k+1}; x}
\end{equation}
for all $x$, and since by compactness $L$ may be covered by finitely many such neighborhoods, we may assume that (\ref{eq:estim-m-deriv-loc}) holds for all $x \in L$ for fixed constants $A_0, K_0 > 0$. That is, 
\begin{equation} \label{eq:estim-m-deriv}
\left\|\left. \dfrac{d^n}{dt^n}\right|_{t=0} F_\Psi(t s)\right\|_{C^k} \leq n! A_0 K_0^n \|s\|^n_{C^{k+1}}.
\end{equation}

By \cite[Lemma 6.2]{LS2014}, the estimate (\ref{eq:estim-m-deriv})  implies  that the  map $F_\Psi: C^k \Om^0 (L, NL)  \to  C^{k-1}\Om^{l-1} (L, NL)$ is an analytic map between    Banach spaces.  Since $L$ is compact,
as in \cite{LS2014},  this implies  that   the  map $F_\Psi: L^2_k \Om^0(L, NL) \to L^2_{k-1} \Om^{l-1}(L, NL)$ is also an  analytic map between    Banach  spaces.
\end{proof}

In order to utilize this analyticity, we shall need some regularity on the linearization $d_{\mathbf{0}}F_\Psi$. For $\xi \in T^\ast_xM$, we define the linear map
 \begin{equation} \label{eq:sigmaxi}
 \sigma_\xi: \Lambda^{l-1} T_x^\ast M \otimes T_xM \longrightarrow \Lambda^l T^\ast_xM, \quad \alpha^{l-1} \otimes v \longmapsto \xi(v) \xi \wedge \alpha.
 \end{equation}
 \begin{definition} \label{def:non-degenerate}
 We call $\Psi \in \Om^{l-1}(M, TM)$ {\em multi-symplectic} if $\sigma_\xi \Psi \neq 0$ for all $\xi \neq 0$. We say that $\Psi$ is {\em multi-symplectic on $L$} for a $\Psi$-submanifold $L$, if $\Psi_{|L}$ is multi-symplectic in $\Om^{l-1}(L, TL)$.
\end{definition}

This terminology generalizes the notion of multi-symplecticity of differential forms, as it follows from (\ref{eq:def-partial}) that $\Psi = \hat \varphi$ is multi-symplectic (on $L$) iff $\varphi$ ($\varphi_{|L}$, respectively) is multi-symplectic, meaning that $\imath_\xi \varphi = 0$ only if $\xi = 0$.

\begin{proposition} \label{prop:non-degenerate}
Let $\Psi$ be multi-symplectic on the $\Psi$-submanifold $L \subset M$. Then $d_{\bf 0} F_\Psi$ is an overdetermined elliptic differential operator. In particular, this is the case if $\Psi = \hat \varphi$ and $\varphi_{|L}$ is multi-symplectic.
\end{proposition}

\begin{proof}
We pick coordinates $(x^i, y^r)$ as in the proof of Proposition \ref{prop:anal2k}. Then (\ref{eq:fk1}) yields
\begin{align*}
d_0F_\Psi(s) &= \l_1(s) = \left. \dfrac d{dt}\right|_{t=0} F_\Psi(ts)\\
& = \sum_{|I| = 2k-1} \left( f^i_{I; \emptyset}(x, 0) \dfrac{\p s^r}{\p x^i}(x)- s^u \dfrac{\p}{\p y^u} f^r_{I, \emptyset}(x,0)\right) dx^I \otimes \dfrac \p{\p y^r}\\
& \qquad - \sum_{|J| = 2k-2} f^r_{J; u} (x, 0) \dfrac{\p s^u}{\p x^i} dx^J \wedge dx^i \otimes \dfrac \p{\p y^r}.
\end{align*}
Thus, for $\xi = \xi_i dx^i \in T^\ast_{x_0}L$, the symbol of $d_0F_\Psi$ is
\begin{align*}
& \sigma_\xi d_0F_\Psi (s) =\\
&= \sum_{|I| = 2k-1} f^i_{I; \emptyset}(x_0, 0) \xi_i s^r dx^I \otimes \dfrac \p{\p y^r}
-\sum_{|J| = 2k-2} f^r_{J; u} (x_0, 0) \xi_i s^u dx^J \wedge dx^i \otimes \dfrac \p{\p y^r}\\
 &\stackrel{(\ref{eq:def-s(x)})} = \sum_{|I| = 2k-1} f^i_{I; \emptyset}(x_0, 0) \xi_i dx^I \otimes s
  - \xi \wedge \sum_{|J| = 2k-2} f^r_{J; u} (x_0, 0) s^u dx^J \otimes \dfrac \p{\p y^r}
\end{align*}
which implies
\[
\xi \wedge \sigma_\xi d_0F_\Psi (s) =   \xi \wedge \sum_{|I| = 2k-1} f^i_{I; \emptyset}(x_0, 0) \xi_i dx^I\otimes s \stackrel{(\ref{eq:sigmaxi}), (\ref{eq:Psi1})}=  \iota_\xi \Psi_{|L} \otimes s.
\]
Since $\Psi_{|L}$ is multi-symplectic, $\iota_\xi \Psi_{|L} \neq 0$ for all $\xi \neq 0$, whence the symbol $\sigma_\xi d_0F_\Psi$ is injective for all $\xi \neq 0$, showing the assertion.
\end{proof}

We are almost in the position now to construct a local analytic chart on $\Mm_\Psi (L)$  using the \emph{Inverse Function Theorem} (IFT)  for  analytic mappings between   real analytic  Banach  manifolds \cite{Douady1966},  see   \cite[Appendix]{LS2014} for a short account. However, the main difference to the situation handled there is that we do not know a priori if the space $J_\Psi(L)$ of infinitesimal $\Psi$-deformations is finite dimensional, whence we need to impose this as an additional condition.

\begin{theorem} \label{thm:Main-Psi}
Let $M$ be an analytic manifold with an analytic section $\Psi \in \Om^{l-1}(M, TM)$. 
\begin{enumerate}
\item If $L \subset M$ is an analytic $\Psi$-submanifold such that $\Psi$ is multi-symplectic on $L$ and $J_\Psi(L)$ is finite dimensional, then the  pre-moduli space $\Mm_\Psi (L)$ of all $\Psi$-submanifolds  $C^1$-close to $L$ forms a finite dimensional analytic variety.

\item If any $\Psi$-submanifold in $\Mm_\Psi (L)$ shares the properties given in (1), then $\Mm_\Psi (L)$ is a finite dimensional analytic space.

\end{enumerate}
\end{theorem}

\begin{proof}
 As $\Psi$ is fixed, we shall simply write $F$ instead of $F_\Psi$. Since $\Psi$ is multi-symplectic on $L$ and hence $d_{\mathbf 0}F$ is overdetermined  elliptic by Proposition \ref{prop:non-degenerate}, we  have the following $L^2$-orthogonal decomposition (see e.g. \cite[Corollary 32, p. 464]{Besse1987})

\begin{equation}\label{eq:decom1}
L^2\Om^{l-1}(L, NL) = d_{\mathbf 0}F (L^2_1 \Gamma (NL)) \oplus  \bigl(\ker  (d_{\mathbf 0}F) ^* \cap L^2\Om^{l-1}(L, NL)\bigr).
\end{equation}

\begin{itemize}
\item Let  $\Pi_1:  L^2  \Om^{l-1} (L, NL)  \to d_{\mathbf 0}F (L^2_1 \Gamma (NL))$ be the orthogonal projection  with respect to     the  decomposition in (\ref{eq:decom1}). 
 Being bounded linear, 
$\Pi_1$ is  an  analytic map between  Banach spaces.
\item Let  $ U(\mathbf 0)$  denote   an open  neighborhood  of $\mathbf 0$ in $L^2_1(\Gamma (NL))$  such that   the restriction of
the map $F$  to $U(\mathbf 0)$ is analytic. The existence  of $U(\mathbf 0)$ is ensured by Proposition \ref{prop:anal2k}.
\item Denote by $\pi: L^2_1 \Gamma (NL) \to J_\Psi(L)$  the orthogonal   projection. Since   $J_\Psi(L)$ is assumed to be finite dimensional, $\pi$ is bounded linear and hence analytic.
\end{itemize}
 Then we set
\begin{equation}\label{eq:proj}
\hat F: = \pi \oplus  \left( - \Pi _1 \circ F \right):    L ^2_1 (\Gamma(NL) )\supset U(\mathbf 0) \to J_\Psi(L)  \oplus  d_{\mathbf 0}F (L^2_1 \Gamma (NL)).
\end{equation}

By Proposition \ref{prop:anal2k}, the map $\hat F$   is analytic in $U(\mathbf 0)$   and its differential  at $\mathbf 0$  is an isomorphism. Therefore the IFT  for analytic mappings of Banach spaces   implies that  there  is an  analytic inverse of $\hat  F$
$$ G:  V(0, \mathbf 0) \to U  (\mathbf 0)$$
where $V(0, \mathbf 0)$   is an open neighborhood of $(0, \mathbf 0) \in  J_\Psi(L) \oplus  d_{\mathbf 0}F(L^2_1 \Gamma(NL))$.

Let $V^{J_\Psi} ( 0, \mathbf 0): = V (0, \mathbf 0)  \cap (J_\Psi(L), \mathbf 0)$. 
Next we define  the map
\begin{equation}\label{eq:inverse}
\tau:  V^{J_\Psi} (0, \mathbf 0) \to J_\Psi(L), \:     s \mapsto  \pi \circ G (s) - i(s)
\end{equation}
where  $i: (J_\Psi(L), \mathbf 0)  \to J_\Psi(L)$  is the natural   identification map. We now assert that

\begin{enumerate}
\item The map $\tau$ is analytic.
\item The  restriction  of the    projection $\pi$  to   $F^{-1} (\mathbf 0) \cap  U( \mathbf 0)$ is injective.
\item An element  $y \in V^{J_\Psi}( 0, \mathbf 0)$  belongs to $\tau^{-1}(0)$ if and only if  $y = i ^{-1}\circ \pi(z)$ for some   $z\in (\Pi_1 \circ F)^{-1}(\mathbf 0)$. 
\end{enumerate}

The first statement holds as both $\pi$ and $G$  are analytic maps, whereas the second holds since $\hat F$ is locally invertible at the origin.

To see the last assertion, let us first proof the ``if"-part. Assume that  $y = i^{-1} \circ \pi (z)$   and $\Pi_1\circ F(z) = \mathbf 0$.  Then $\hat F (z) = i^{-1}\circ  \pi (z) = y$.
It follows $ z= G (y)$ and $\tau (y)  = \pi \circ  G (y) - i(y) = \pi (z)  -  \pi (z) = 0$, which  proves the  ``if"-assertion.   

Now assume that $\tau (y) = 0$.  Then  $\pi \circ  G (y)   = i (y)$.  Set $z = G (y) $. Then  $\hat F (z)  = y = \pi (z)$ and therefore, $\Pi_1 \circ F(z) =\mathbf 0$, so that the third assertion is shown as well.

We now consider the restriction $F: \tau^{-1}( 0) \to \ker \Pi_1$. Then $F^{-1}({\bf 0})$ consists of the common zeroes of the analytic functions $F_\xi(\cdot) := \langle \xi, F(\cdot)\rangle$ on $\tau^{-1}(0)$ for all $\xi \in \ker \Pi_1$ and where $\langle\cdot,\cdot\rangle$ refers to the scalar product on $L^2\Om^{l-1}(L, NL)$.

Since $\tau^{-1}(0) \subset V^{J_\Psi} (0, \mathbf 0)$ 
is a finite dimensional real analytic variety, the ring of germs of analytic functions at $\mathbf 0$ is Noetherian \cite[Theorem I.9]{Frisch}. Therefore, $F^{-1}({\bf 0})$ is given as the zero set of {\em finitely many } analytic functions $F_{\xi_1}, \ldots, F_{\xi_N}$. In other words, there is an analytic function
\[
\hat \tau:= (\tau, F_{\xi_1}, \ldots, F_{\xi_N}): V^J(0, {\bf 0}) \longrightarrow J_\Psi(L) \oplus \R^N
\]
for some finite number $N$ such that $F^{-1}(y) = 0$ iff $y = i^{-1} \circ \pi(z)$ for some $z \in F^{-1}(\mathbf 0)$. This allows to  identify     the $C^1$-neighborhood  $F^{-1}(\mathbf 0)  \cap U (\mathbf 0)$  of $L$ in the   pre-moduli  space  $\Mm_\Psi (L)$
with  the  pre-image  $\hat \tau^{-1} (0) $   in the neighborhood of $  0 \in J_\Psi(L)$  via the map  $ i ^{-1} \circ \pi: F^{-1}(\mathbf 0)  \cap U (\mathbf 0)  \to \hat \tau ^{-1} ( 0)$, 
where $\hat \tau $  is an analytic map between  open neighborhoods of     finite dimensional   vector spaces. 
Since    $F^{-1}(\mathbf 0) \cap  U (\mathbf  0)$   
models   a $C^1$-neighborhood  $U(L)$ of $L$ in $\Mm_{\Psi} (L)$,
this      completes  the proof of the first statement of the theorem.

 To show the second statement, assume that $L_1 \in \Mm_\Psi(L)$  lies  in  a $C^1$-neighborhood of   $L$, so that $L_1 =  s(L)$ for some analytic section
$s$  of  $NL$.  Then   $s$  induces, via $\exp \mathrm{j}_L(s)$, an invertible analytic  map  between  the Sobolev spaces $L^2_1  \Gamma(NL)$ and $ L^2_1 \Gamma (NL_1)$
as well as   an invertible analytic   map between  $L^2_1 \Om^{l-1}(L, NL)$ and  $L^2_1 \Om^{l-1} (L, NL)$.

This means that the charts on $U_1(L_1)$  constructed  via
 maps $\pi ^{L_1}$, $\tau ^{L_1}$ 
are equivalent  to   the analytic structure  induced from  the one on $U(L)$. In other words, any two analytic charts are compatible, which completes the proof. 
\end{proof}

\section{Deformations of $\varphi$-calibrated  submanifolds}\label{subs:calibr}

In this    section we consider   smooth $\hat \varphi$-deformation  of a  closed $\varphi$-calibrated submanifold  $L$, where    $\varphi^l$ is
a parallel  calibration  on the  Riemannian  manifold $(M,g)$.  First,  we prove  that the  space  of all infinitesimal  $\hat \varphi$-deformations  of $L$ coincides    with the space  of Jacobi  vector fields  on $L$, regarding  $L$ as  a minimal submanifold (Proposition \ref{prop:infinitesimal}).  Then we prove  our main theorem  stating that the  formal and smooth  deformations of  a closed  $\varphi$-calibrated  submanifold   are encoded in its  cananically associated $\Z_2$-graded  strongly homotopy Lie algebra (Theorem \ref{thm:main}). In Remark \ref{rem:simul}  we discuss  some related  results. Then we   revisit  the deformation theory of complex submanifolds using the methods developed in the present paper (Theorem \ref{thm:c}). Finally, in the last subsection  we summarize the achievements of   the present  paper.

\subsection{Infinitesimal deformations of $\hat \varphi$-submanifolds}

Given a {\it parallel}  calibration $\varphi$   and  a  closed  $\varphi$-calibrated submanifold  $L$, the premoduli space
$\Mm_{\hat\varphi}(L)$  consists  of all  closed minimal  submanifolds  that are  obtained  from $L$ by smooth deformation.  The following  Proposition, in which we do not require the calibration $\varphi$  to be  parallel,  is an  infinitesimal  analogue
of Corollary \ref{cor:caldeform}.

\begin{proposition}\label{prop:infinitesimal} 
Let $\varphi \in \Om^l(M)$ be a calibration and $L \subset M$ closed such that either
\begin{enumerate}
\item $L$ is $\varphi$-calibrated, or
\item $L$ is a $\hat \varphi$-submanifold such that $\varphi_{|L} \not = 0$, and $\varphi$ is parallel.
\end{enumerate}
Then $L$ is minimal, and
\begin{equation} \label{eq:compareJ}
J_{\hat \varphi} (L) \subset J(L),
\end{equation}
where $J_{\hat \varphi} (L)$ is given in (\ref{eq:JPsi}) and $J(L)$ is the space of Jacobi 
fields    on $L$. Moreover, under condition (1), equality holds in (\ref{eq:compareJ}).
\end{proposition}

\begin{proof} The minimality of $L$ is evident if $L$ is $\varphi$-calibrated, and it follows from Theorem \ref{thm:crit} under condition (2). 
Assume that  $L$ is  a   closed $\hat \varphi$-submanifold  and
 $s\in  \Gamma (NL)$ is an infinitesimal   $\hat \varphi$-deformation. Let  us   recall that  $ \psi_t = \exp \bigl(\mathrm{j}_L(ts)\bigr)$. Since $L$ is compact, there exist  a  positive number $A$  and a positive number  $\eps_0$ such that,  for any $ x\in L$
 \begin{equation}\label{eq:jacobi}
 |\pr \bigl((\psi_t)^* \hat \varphi\bigr)_{|L}(x) |  \le A \cdot  t^2
 \end{equation}
 for  any $t \le \eps_0$.
 Denote by $G_\varphi(x)$  the  space of  unit  decomposable  $l$-vectors $w$ in $G_l (T_x M)$ such that
 $\varphi (w) =1$. 
 Denote by $\rho$ the  distance on  the Grassmannian  $G_l( T_xM)$   induced  by the Riemannian metric on $T_xM$.
 
 \ 
 
We shall abbreviate $L_t : = \psi_t (L)$, $x_t : = \psi _t (x)$  and $\hat \varphi_t : = (\psi_t)^* \hat \varphi$.
 
 \begin{lemma}\label{lem:jacobi} The inequality (\ref{eq:jacobi}) is equivalent to the existence of  a positive number $B$   and  a positive  number    $\eps _1$ such that  
 \begin{equation}\label{eq:jacobi1}
 \rho \Bigl(\vec{T_{x_t} L_t}, G_\varphi (x_t)\Bigr) \le  B \cdot t^2
 \end{equation}
 for  all $t \in (0, \eps_1)$ (Recall that $\vec{T_{x_t} L_t}$ is the unit $l$-vector  associated to the  oriented tangent space $T_xL$).
 \end{lemma}
 \begin{proof} Since  $\psi_0 = Id$  we observe  that   $|\pr \hat \varphi_t| = O (t^2)$  if and only
 $|\pr\hat \varphi _{| L_t}| =  O (t^2)$. Recall that we denoted by $\tilde \varphi (x)$ the form $\varphi$ (at the point $x$) regarded as a function on the Grassmannian $G_l(T_xM)$ of unit decomposable $l$-vectors. Then the function $|  d_w \tilde  \varphi (x)|$  is smooth in the variable  $w \in  G_l (T_xM)$. Since $d_w \varphi(x) = 0$  if $w \in G_\varphi(x)$, this implies that    there  exist  positive  constants   $C_1, C_2$ such that
 \begin{equation}\label{eq:taylor}
 C_1\cdot  |  d_w \tilde  \varphi (x)|  \le  \rho (w,  G_\varphi(x))  \le  C_2  \cdot  |  d_w \tilde  \varphi (x)|.
 \end{equation}
 Now, Lemma  \ref{lem:jacobi}  follows from (\ref{eq:prn}), which implies
\begin{equation}\label{eq:jacobi2}
   |\pr \hat \varphi _{ | T_{x_t} L_t}|  =    |  d_{\vec{T_{x_t} L_t}} \tilde  \varphi (x_t)|.
   \end{equation}
\end{proof}
Observe that $c_0 := \varphi^k(\vec{T_xL})$ is constant on $L$. Indeed, if $\varphi$ is parallel, then this follows from (\ref{eq:const}), and if $L$ is $\varphi$-calibrated, this holds for $c_0 :=1$ by definition. Thus, Lemma  \ref{lem:jacobi} implies that  there exist a constant  $C_3$  and $0 < \eps _2< \eps _1$  such  that for all $ x\in L$ and  all $t \in (0, \eps_2)$ we have
 \begin{equation}\label{eq:jacobi3}
c_0 - C_3  t^4 \le  \la  \varphi ,  \vec{T_{x_t} L_t}  \ra  \le  c_0 + C_3  t^4.
  \end{equation}
  Since $\psi_0 = Id$,   there exist  a constant  $C_4$  and a positive   number   $ \eps _3 < \eps_2$  such that, for all $ x\in L$ and  all $t \in (0, \eps_3)$, we have
\begin{equation}\label{eq:jacobi4}
1 - C_4 t  \le |(\psi_ t) _*  \vec{T_xL}|\le  1 + C_4  t.
\end{equation}
It follows from   (\ref{eq:jacobi3}) and (\ref{eq:jacobi4}) that   there  exists  a constant  $C_5$ such  that for all $x\in L$ and   all $t \in (0, \eps_3)$, we have
\begin{equation}\label{eq:jacobi6}
   (c_0- C_5 t ^3) \la  \varphi ,  (\psi_t)_* (\vec{T_x L})  \ra \le  |(\psi_ t) _*  \vec{T_xL}| \le (c_0+C_5 t^3) \la  \varphi , (\psi_t)_* (\vec{ T_x L})  \ra.
\end{equation}
Finally, it follows  from (\ref{eq:jacobi6}) and $\la \varphi ,  (\psi_t)_*(\vec{ T_x L})  \ra = \la \psi_t ^* (\varphi) \vec{T_xL} \ra$, that
\[
\frac{ d^2}{dt^2}{}_{| t =0}  vol  (\psi _t (L)) =  \int_L\frac{ d^2}{dt^2}{}_{| t =0} |(\psi_t)_* (\vec{T_xL})| \, dvol_L  = \int_L \frac{ d^2}{dt^2}{}_{| t =0}  (\psi_t)^* \varphi = 0.
\]
Hence  $s$ is a Jacobi    vector field.  This proves  (\ref{eq:compareJ}).
  
For the last statement, assume that $s$ is a Jacobi vector field on the $\varphi$-calibrated submanifold $L$. 
By Remark   2.3 in \cite{LV2017}, 
 $s$ is an infinitesimal deformation of $L$ as a $\varphi$-calibrated  submanifold.  This
 is the same   to  say that (\ref{eq:jacobi1}) holds for some $B$ and $\eps_0$.
 By Lemma \ref{lem:jacobi},  this  implies  that $s \in J_{\hat \varphi} (L)$.
 \end{proof}
 
Note that (\ref{eq:compareJ}) in case of hypothesis (2) in Proposition \ref{prop:infinitesimal} can also be seen by an argument along the lines of the proof of Theorem \ref{thm:crit}.
 
Recall that $J(L)$ is finite dimensional and its dimension is called  the {\it nullity  of $L$} \cite{Simons1968}.

From Proposition \ref{prop:infinitesimal} we obtain Corollary \ref{cor:jacobi} below,  which has been   first proved by Simons  in \cite[Theorem 3.5.1]{Simons1968}  by computing the Jacobi operator 
on a compact  K\"ahler submanifold $L$. Simons'  computation has been  generalized  by McLean \cite{McLean1998} for  calibrated submanifolds,  and    simplified by L\^e-Van\v zura \cite{LV2017}, using different  methods.

\begin{corollary}\label{cor:jacobi}  Let $L$ be a compact and closed  K\"ahler submanifold
in a  K\"ahler  manifold $(M, g, \om^2)$. Then
 the nullity of $L$ is equal to the dimension of the space
of globally defined holomorphic sections in  $NL$.
\end{corollary}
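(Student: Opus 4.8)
The plan is to identify the space of Jacobi fields $J(L)$ with the kernel of a suitable elliptic operator and then recognize that operator, in the Kähler setting, as essentially $\bar\partial$-type. By Proposition \ref{prop:infinitesimal}, $J_{\hat\varphi}(L)=J(L)$, and by Corollary \ref{cor:infpsi} an infinitesimal $\hat\varphi$-deformation is precisely a section $s\in\Gamma(NL)$ with $\l_1(s)=0$, where $\l_1$ is the unary bracket of the $L_\infty$-algebra of Corollary \ref{cor:linfty}.2 (using that $\varphi=\om^2$ and hence $k=2$ is even, so we stay on $L$ itself and do not need the $S^1$-trick). So the nullity of $L$ equals $\dim\ker\l_1$, where $\l_1\colon\Om^0(L,NL)\to\Om^1(L,NL)$ is the linear part of $F_{\hat\varphi}$ at the zero section. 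The first step is therefore to compute $\l_1$ explicitly.

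The key observation is that for a complex submanifold $L\subset M$, the relevant square-zero element is the complex structure $J\in\Om^1(M,TM)$ itself (Example \ref{ex:psisubm}.2), and a Kähler submanifold is simultaneously a $\hat{\om^2}$-submanifold and a $J$-submanifold — indeed, since $\om^2$ is parallel on a Kähler manifold, one checks $\hat{\om^2}=J$ up to a constant, because contracting $\om^2=g(J\cdot,\cdot)$ with the metric returns $J$. Hence the $L_\infty$-structure attached to $L$ as a $\hat{\om^2}$-submanifold agrees with the one attached to it as a $J$-submanifold, which is the Kodaira–Spencer / Manetti $L_\infty$-algebra governing deformations of $L$ as a complex submanifold (this is precisely the content flagged in Corollary \ref{cor:linfty}.2 and Remark \ref{rem:compl}). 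For that $L_\infty$-structure, the underlying complex is the Dolbeault complex $\big(\Om^{0,\bullet}(L,N^{1,0}L),\bar\partial\big)$ of the holomorphic normal bundle (up to the degree shift), so $\l_1$ is the operator $\bar\partial\colon\Gamma(N^{1,0}L)\to\Om^{0,1}(L,N^{1,0}L)$, whose kernel is by definition $H^0(L,N^{1,0}L)$, the space of global holomorphic sections of the normal bundle. One then notes that, $L$ being a complex submanifold, its real normal bundle $NL$ is canonically isomorphic as a real vector bundle to $N^{1,0}L$, so a real section is Jacobi iff the corresponding $(1,0)$-section is holomorphic.

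Putting these together: the nullity of $L$, i.e.\ $\dim J(L)$, equals $\dim\ker\l_1=\dim H^0(L,N^{1,0}L)$, which is the dimension of the space of globally defined holomorphic sections of $NL$. The main obstacle I anticipate is the explicit identification $\l_1=\bar\partial$ together with $\hat{\om^2}=J$; this requires carefully unwinding the definition of $\iota_L$, $P_L$, and $\triangle_{L,\tau}=\tau^*\hat{\om^2}$ in the Voronov data, differentiating $F_{\hat\varphi}$ at the zero section, and recognizing the resulting first-order operator as the Dolbeault differential — in effect re-deriving (a piece of) \cite{Manetti2007}. Once this linearization is in hand, and combined with Proposition \ref{prop:infinitesimal}, the corollary is immediate, and in particular it reproves Simons' computation \cite[Theorem 3.5.1]{Simons1968} without computing the Jacobi operator directly.
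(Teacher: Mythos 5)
Your opening reduction is sound and matches the paper's: by Proposition \ref{prop:infinitesimal} the nullity of $L$ equals $\dim J_{\hat\varphi}(L)$, and by Corollary \ref{cor:infpsi} this is $\dim\ker\l_1$ for the $L_\infty$-algebra built from the calibration. The gap is in the step that identifies $\l_1$ with $\bar\partial$. A K\"ahler submanifold of complex dimension $p$ is calibrated by $\om^p/p!$, \emph{not} by the K\"ahler $2$-form $\om$ itself, except when $p=1$; and Proposition \ref{prop:infinitesimal} requires $L$ to be genuinely $\varphi$-calibrated, i.e.\ $\varphi|_L=\vol_L$. Your key identity ``$\hat\varphi=J$ up to a constant'' is correct only for $\varphi=\om$, hence only for complex curves. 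For $p\ge 2$ the element $\widehat{\om^p/p!}$ is a $TM$-valued $(2p-1)$-form, its $\l_1$ maps $\Gamma(NL)$ into $\Om^{2p-1}(L,NL)$ rather than into $\Om^{0,1}(L,N^{1,0}L)$, and the $L_\infty$-algebra it generates is a priori a different object from the Kodaira--Spencer/Manetti one attached to $J$ (compare Remark \ref{rem:simul}.4, where the paper explicitly treats the two strongly homotopy Lie algebras attached to an associative submanifold as distinct structures that happen to govern the same problem). So the bridge $\ker d_{\mathbf 0}F_{\hat\varphi}=\ker\bar\partial$ is precisely the nontrivial content of the corollary and cannot be obtained by substituting $J$ for $\hat\varphi$.

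To repair this you must either restrict to $\dim_{\C}L=1$, or supply the missing comparison for general $p$: for instance compute $d_{\mathbf 0}F_{\widehat{\om^p/p!}}$ from (\ref{eq:DF}) and show its kernel is $H^0(L,N^{1,0}L)$ --- which amounts to redoing Simons'/McLean's computation in the form simplified by L\^e--Van\v zura, i.e.\ the very computation the corollary is meant to package. The paper's own route avoids the linearization of $F_{\hat\varphi}$ altogether: Proposition \ref{prop:infinitesimal} (via Lemma \ref{lem:jacobi} and \cite[Remark 2.3]{LV2017}) identifies the Jacobi fields with the infinitesimal deformations of $L$ \emph{inside the class of $\om^p/p!$-calibrated submanifolds}; by Wirtinger's theorem that class coincides with the class of complex submanifolds, and the infinitesimal deformations of a complex submanifold are classically the holomorphic normal fields ($\ker\bar\partial$ on $NL\cong N^{1,0}L$, as in your last step). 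If you reorganize your argument along these lines --- comparing the two deformation problems at the level of the Grassmannian condition rather than at the level of the two $L_\infty$-structures --- the conclusion goes through for all $p$.
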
 

We are now ready to show the main result of this section.

\begin{theorem}[Main Theorem]\label{thm:main}  Let $\varphi \in \Om^l(M)$ be a parallel calibration on a real analytic Riemannian manifold $(M, g)$, let $\alpha \in H_l(M, {\mathbb Z})$ be a homology class such that $\langle [\varphi], \alpha\rangle \neq 0$, and let $L \in \Mm_{\hat \varphi}(\alpha)$. 
\begin{enumerate}
\item The  pre-moduli space $\Mm_{\hat \varphi}(\alpha)$   is a finite dimensional analytic space, and so is, in particular, $\Mm_{\hat \varphi}(L) \subset \Mm_{\hat \varphi}(\alpha)$.
\item If $L$ is $\varphi$-calibrated, then a formally   unobstructed   Jacobi    field $s  \in  J(L)$
is smoothly unobstructed. 
\item If $L$ is $\varphi$-calibrated, then there is a canonical $\Z_2$-graded strongly homotopy Lie algebra that governs  formal and smooth deformations of $L$ in the  class of  $\varphi$-calibrated    submanifolds. 
\end{enumerate}
\end{theorem}

\begin{proof}
Let $\Nn_\varphi := \{ v \in TM \mid \imath(v) \varphi = 0\}$ be the annihilator of $\varphi$ which induces the parallel orthogonal decomposition $TM = \Nn_\varphi \oplus \Nn_\varphi^\perp$, and clearly, the restriction of $\hat \varphi$ to $\Nn_\varphi^\perp$ is multi-symplectic. As $L$ is $\varphi$-calibrated, it follows that $TL \subset \Nn_\varphi^\perp$, so that $L$ is contained in a maximal leaf $M_0 \subset M$ of the (parallel and hence integrable) distribution $\Nn_\varphi^\perp$. The normal bundle of $L$ decomposes orthogonally as
\begin{equation} \label{eq:normal-L}
NL = NL_0 \oplus (\Nn_\varphi)_{|L},
\end{equation}
where $NL_0$ is the normal bundle of the inclusion $L \hookrightarrow M_0$. As $L$ is closed, there is an $\eps > 0$ such that the normal exponential $\exp: NL_\eps \to M$ is a fiberwise local diffeomorphism, where $NL_\eps \subset NL$ denotes the $\eps$-disc bundle. Let $g_N := \exp^\ast(g)$ be the induced metric on $NL_\eps$. As $g$ is a local product metric, the $g_N$-orthogonal complement of the fibers of $((\Nn_\varphi)_{|L})_\eps \to L$ induce a flat connection on this disc bundle, and decomposing a small section $s = s_0 + s_1 \in \Om^0(L, NL_\eps) = \Om^0(L, (NL_0)_\eps) \oplus \Om^0(L,((\Nn_\varphi)_{|L})_\eps)$ according to (\ref{eq:normal-L}), the definition of $F_{\hat \varphi}$ implies that $F_{\hat \varphi}(s) = 0$ iff $F_{\hat \varphi}(s_0) = 0$ and $s_1$ is parallel. Thus,
\[
\Mm_{\hat \varphi}(L) \cong \Mm^0_{\hat \varphi}(L) \times \R^k,
\]
where $\Mm^0_{\hat \varphi}(L)$ is the premoduli space of $L \subset M_0$, and $k$ is the dimension of the space of parallel sections in $\Om^0(L,(\Nn_\varphi)_{|L})$.

With this, it suffices to show the theorem for $L \subset (M_0, \varphi_{|M_0})$, and, after replacing $M_0$ by $M$, we may therefore assume w.l.o.g. that $\varphi$ is multi-symplectic.

Being parallel, $\varphi$ is  harmonic and hence  analytic. If $L \in \Mm_{\hat \varphi}(\alpha)$, then $\langle [\varphi], \alpha\rangle \neq 0$ implies that $\varphi_{|L} \neq 0$ and moreover, by Theorem \ref{thm:crit}, (\ref{eq:const}) is satisfied, and $L \subset M$ is a minimal submanifold. In particular, $L$ is analytic by the Morrey regularity theorem \cite{Morrey1954, Morrey1958, Morrey2008}.

Therefore, Proposition \ref{prop:infinitesimal} implies that $J_{\hat \varphi}(L) \subset J(L)$ is  finite dimensional, so that $\Mm_{\hat \varphi}(L)$ is an analytic space by Theorem \ref{thm:Main-Psi}. Since this is the case for any $L \in \Mm_{\hat \varphi}(\alpha)$, it follows that $\Mm_{\hat \varphi}(\alpha)$ is an analytic space as well.
 
The second assertion  of Theorem  \ref{thm:main} is a corollary  of the first  assertion, Proposition \ref{prop:infinitesimal}, and the Artin's approximation theorem \cite[Theorem 1.2]{Artin1968}, which implies that,  in a finite dimensional analytic space,   smooth and   formal    obstructedness  are equivalent.
  
For the last statement, assume that $L$ is   a $\varphi^{2l}$-calibrated submanifold.
Then  the last   assertion of Theorem \ref{thm:main}  for  $L$  follows from the second assertion and  Proposition \ref{prop:mc}.

 Now  assume that   $L$ is a $\varphi^{2l-1}$-calibrated submanifold. Then  $L \times S^1$ is a $\varphi^{2l-1}\wedge  dt$-calibrated  submanifold  in $(M\times S^1, g + dt^2, \varphi \wedge dt)$.  It is not hard to see  that, if   $\tilde L_t$ is a smooth deformation  of $L\times S^1$ in the class of  minimal submanifolds in $M \times S^1$, then
 $\tilde L_t = L_t \times  S^1$ for some family of $\varphi$-calibrated  submanifolds $L_t$.  Hence,  the formal and smooth  deformations
 of  $\varphi^{2l-1}$-calibrated submanifold are governed   by the   $\Z_2$-graded   strongly homotopy Lie algebra associated to $L\times S^1$.
 This completes  the proof of Theorem \ref{thm:main}.
\end{proof}

\begin{remark}\label{rem:simul}   

1.  Theorem  \ref{thm:main}   is also valid  for  open   $\varphi$-calibrated submanifolds  with    compactly supported variation fields.

2.   Assume that $L$  is     simultaneously  a $\varphi$-calibrated submanifold  and a $\varphi'$-calibrated submanifold,  where
$\varphi$ and $\varphi'$ are    calibrations  on $(M, g)$. Then   any $\varphi$-calibrated  closed  submanifold  $L'$ that is  homologous to $L$ is also 
a $\varphi'$-calibrated submanifold. This    implies   that deformations of such  calibrated submanifolds are easier to    control.
For example,  let  $(M^6, g, \om^2, \alpha = \Re vol_\C)$   be a Calabi-Yau 6-manifold  and $C \subset  (M^6, g, \om^2, \alpha)$   a complex curve. Clearly,  the product  $L : = S^1\times C$       is   simultaneously   calibrated  with respect to both the associative calibration  $\varphi: = dt \wedge \om^2 +\alpha$ and the  calibration $dt \wedge \om^2$.
Hence, any  deformation  $L'$ of  $L$ in the class of   associative     submanifolds is also calibrated by $dt\wedge \om$. In particular, $L'$ is invariant under  the  flow generated by  the vector field $\p _{t}$.  This flow  preserves  the Calabi-Yau structure  on each  slice  $\{ t \} \times M^6$.  We conclude that all the slices $L' \cap \{ t = \text{constant}\}$  are isomorphic as  complex curves in $M^6$.
It follows  that $L' =  S^1 \times  C'$, where $C'$ is a     complex deformation of $C$. In particular, if $C$ is isolated then $L$ is isolated. The last assertion  has been obtained
in \cite[Lemma 5.11]{CHNP2012} by computing  the  kernel of the corresponding  linearized operators  that control  the  corresponding deformations.
In \cite{Leung2002}  Leung  studies   deformations of simultaneously calibrated submanifolds using  integral estimates.  We refer the interested reader to \cite{Le1993} for  the  relation between the calibration method  and the integral  estimate method in the theory of minimal submanifolds.

3.  As  we noted in Corollary  \ref{cor:linfty},  there are two natural   $\Z_2$-graded  strongly homotopy Lie algebras associated  to    an associative
submanifold $L$ in a $G_2$-manifold $(M^7, g, \varphi)$.  It is known that the smooth and infinitesimal $\chi$-deformations of  $L$   coincide with the smooth  and infinitesimal deformations of $L$ as  a minimal submanifold \cite{McLean1998, LV2017}. Thus, the  strong homotopy Lie algebra attached to $L$ via  $\chi$ also governs  smooth  and formal  deformations  of  $L$ as a $\varphi$-calibrated  submanifold.

4. The action of $\Di_\Psi(M)$ preserves the analytic structure on $\Mm_\Psi(L)$,
whence the moduli space $\Mm_\Psi(L)/\Di_\Psi(M )$ is an analytic space as
well. In particular, generic points of $\Mm_\Psi(L)$ or $\Mm_\Psi(L)/\Di_\Psi(M )$, respectively, are smooth, and hence (formally) unobstructed.
\end{remark}

\subsection{Deformations of complex submanifolds revisited}\label{subs:compl}

\begin{theorem}\label{thm:c}  Assume that $L$ is a closed complex submanifold in a  complex manifold $(M,J)$. 
\begin{enumerate}
\item The pre-moduli space $\Mm_J(L)$ with the $C^1$-topology has the  structure  of  a finite   dimensional analytic space.
\item A formally   unobstructed   holomorphic normal  field 
is smoothly unobstructed.  
\item There is a canonical $\Z$-graded strongly homotopy Lie algebra that governs  formal and smooth complex deformations of $L$. 
\end{enumerate}
\end{theorem}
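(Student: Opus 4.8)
The plan is to read Theorem \ref{thm:c} as the specialisation of Theorem \ref{thm:main} to the odd square-zero element $\Psi=J\in\Om^1(M,TM)$, pointing out the simplifications that the complex case brings. First I would recall, via Example \ref{ex:psisubm}.2, that a closed complex submanifold of $(M,J)$ is precisely a closed $J$-submanifold, and that integrability of $J$ amounts to the vanishing of its Nijenhuis tensor, i.e.\ to $[J,J]^{FN}=0$, so $J$ is indeed an odd-degree (in fact degree $1$) square-zero element of $(\Om^*(M,TM),[-,-]^{FN})$. Then Theorem \ref{thm:linfty} supplies the canonical $\Z$-graded $L_\infty$-algebra on $\Om^*(L,NL)[-1]$ of assertion (3); Corollary \ref{cor:infpsi} identifies its $\l_1$-cocycles with the holomorphic sections of $NL$ (the holomorphic normal fields), and Proposition \ref{prop:mc} identifies its Maurer--Cartan elements with the formal $J$-deformations of $L$, which already gives (3) at the formal level. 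Two features make the analytic part lighter than in Theorem \ref{thm:main}: a complex manifold is automatically real-analytic and $J$ is real-analytic (being constant in holomorphic charts), so $L$ is automatically real-analytic and no analyticity hypothesis needs to be imposed; and, as there is no ambient calibrating form, the multi-symplectic reduction of Step 3 in the proof of Theorem \ref{thm:main} is unnecessary.

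For assertion (1) I would run Step 1 of the proof of Theorem \ref{thm:main} with $\Psi=J$. After fixing a tubular neighborhood and an auxiliary Hermitian metric near $L$, I identify a $C^1$-neighborhood of $L$ in $\Mm_J(L)$ with the zero set of $F_J(s)=P_L\bigl(\exp\iota_L(-s)^*J\bigr)$ as in Proposition \ref{prop:psisection}. The pointwise estimates of Lemma \ref{lem:analytic} used only analyticity of the ambient tensor, so the arguments of Propositions \ref{prop:analck} and \ref{prop:main} go through verbatim: $F_J$ extends to an analytic map $F\colon L^2_k\Gamma(NL)\to L^2_{k-1}\Om^1(L,NL)$ near the zero section. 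The one genuinely new point is the analogue of Lemma \ref{lem:overdte}: I would compute the principal symbol of $d_{\mathbf 0}F_J$ and show it is injective for every nonzero covector, the leading term being the Dolbeault operator $\bar\partial_{NL}$ of the holomorphic structure induced on $NL$; hence $d_{\mathbf 0}F_J$ is overdetermined elliptic. With these two ingredients the rest of Step 1 carries over unchanged---the elliptic orthogonal decomposition of $L^2_{k-1}\Om^1(L,NL)$, the modified map $\hat F_k=\pi_k\oplus(-\Pi_1\circ F)$ with $\pi_k$ the projection onto the (finite-dimensional) space of holomorphic normal fields, the analytic inverse $G_k$ from the inverse function theorem for analytic maps of Banach manifolds, the map $\tau=\pi_k\circ G_k-i$, and finally Noetherianity of the ring of germs of analytic functions together with Artin approximation---and yields a $k$-independent finite-dimensional analytic model of a $C^1$-neighborhood of $L$ in $\Mm_J(L)$ (cf.\ Lemma \ref{lem:finite}); compatibility of the charts around nearby complex submanifolds is Lemma \ref{lem:compatible} with $J$ in place of $\hat\varphi$.

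For (2) I would argue as in Theorem \ref{thm:main}(2): (1) together with Artin's approximation theorem \cite{Artin1968} shows that in this finite-dimensional analytic space formal and smooth obstructedness coincide, so a formally unobstructed holomorphic normal field is smoothly unobstructed. For (3), the $L_\infty$-algebra of the first paragraph governs formal deformations by Proposition \ref{prop:mc} and, via (2), also smooth deformations; since $\deg J=1$ it is $\Z$-graded. I expect the only real obstacle to be the symbol computation for $d_{\mathbf 0}F_J$ in (1): once it is recognised as the $\bar\partial_{NL}$-symbol, the overdetermined ellipticity and everything downstream follow, and all the remaining steps are the word-for-word specialisation to $\Psi=J$ of the machinery already developed for Theorem \ref{thm:main}.
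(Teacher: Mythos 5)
Your proposal is correct and follows exactly the route the paper intends: the paper's own proof of Theorem \ref{thm:c} is simply the remark that it ``is proved in the same way as Theorem \ref{thm:main}'', and your write-up is precisely that specialisation to $\Psi=J$, correctly isolating the one genuinely new ingredient (the linearisation $d_{\mathbf 0}F_J$ being the overdetermined elliptic operator $\bar\partial_{NL}$) and noting that analyticity of the ambient data is automatic in the complex case.
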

\begin{proof}[Proof  of Theorem \ref{thm:c}] Theorem   \ref{thm:c} is proved in the same way as   Theorem \ref{thm:main}  and  we omit its proof.
\end{proof}

\begin{remark}\label{rem:compl}  Deformations of complex     submanifolds   have been    examined by Ji  in \cite{Ji2014}, using  his general theory of deformations  of   Lie subalgebroids.    Since   the  Fr\"olicher-Nijenhuis bracket   of $J \in \Om^*(M, TM)$   is $-i/2$-times the  Dolbeault  operator $\bar \p$,
 Ji's  strongly homotopy   Lie algebra  is the same as ours up to an uninfluential global factor (see  also \cite{Manetti2007}  for an equivalent  formulation).
\end{remark}
 
 \
 
 \subsection{Conclusion} In our  paper, inspired   by the principle
  that every deformation problem over a field of characteristic zero  is governed by  a differential graded Lie algebra (or, equivalently, by an $L_\infty$-algebra),  we  found   new connections between
 seemingly unrelated    subjects:     coisotropic  submanifolds  in Jacobi
  manifolds, calibrated and  more  generally  $\varphi$-critical  submanifolds, complex   submanifolds  in complex manifolds. As a consequence of this deformation theory approach, we  and other  authors were then lead to discover  new  structures  defined  by the Fr\"olicher-Nijenhuis  bracket on  $G_2$  and   Spin(7)-manifolds  \cite{KLS2017a, KLS2017b, KLS2018}, \cite{CKT2018}.   
 \par 
  In the present paper, we also  give  a new  treatment of Jacobi  vector  fields  on calibrated  submanifolds (Proposition \ref{prop:infinitesimal}), generalizing results by Simons, McLean  and L\^e-Van\v zura,   prove  a   result  on the  structure  of   pre-moduli  space  of  $\varphi$-calibrated submanifolds, where $\varphi$  is a parallel  calibration  in a  real analytic  Riemannian manifold (Theorem \ref{thm:main}), and 
  a similar  theorem    for  $\Psi$-submanifolds under  a  certain condition (Theorem \ref{thm:Main-Psi}), which generalizes
 a known  result  on deformations of complex submanifolds in complex
  manifolds.

\subsection*{Acknowledgement} 
LV and LS warmly thank HVL and the Institute of Mathematics of the Czech Academy of Sciences for their hospitality during their respective stays in Prague, where part of this project was developed. LV is member of the GNSAGA of INdAM. We also thank the referees for their careful reading of this manuscript, which helped us to correct some inaccuracies and to significantly improve its exposition.


\begin{thebibliography}{99999}
\bibitem[AS2008]{AS2008}{\sc S. Akbulut and S. Salur}, Calibrated manifolds and gauge theory, J. reine
angew. Math. 625 (2008), 187--214.
\bibitem[AS2008b]{AS2008b}{\sc  S. Akbulut and S. Salur}, Deformations in $G_2$-manifolds, Adv. Math. 217 (2008), 2130--2140.
\bibitem[Artin1968]{Artin1968}{\sc M. Artin}, On the  solutions of analytic  equations,  Inven.  Math. 5 (1968), 277--291. 
\bibitem[AW2003]{AW2003}{\sc M. Atiyah and E. Witten},  M-Theory dynamics on a manifold of $G_2$-holonomy, Adv. Theor. Math. Phys. 6 (2003), 1--106.
\bibitem[Bandiera2015]{Bandiera2015} {\sc R.~Bandiera}, Nonabelian higher derived brackets, J. Pure Appl. Algebra 219 (2015), 3292--3313 .
\bibitem[Besse1987]{Besse1987}{\sc  A. Besse}, Einstein manifolds, Springer-Verlag, 1987.
\bibitem[Bryant1987]{Bryant1987}{\sc R. L. Bryant}, Metrics with exceptional holonomy, Ann. Math. 126 (1987), 525--576.
\bibitem[Butscher2003]{Butscher2003}{\sc A. Butscher}, Deformations of minimal Lagrangian submanifolds with boundary,
Proc. Amer. Math. Soc. 131 (2003), 1953--1964.
\bibitem[CS2008]{CS2008}{\sc A. S. Cattaneo and F. Sch\"atz}, Equivalences of higher derived brackets, J. Pure Appl. Algebra 212 (2008), 2450--2460.
\bibitem[CHNP2012]{CHNP2012}{\sc A. Corti, M. Haskins, J. Nordstr\"om  and  T. Pacini}, $G_2$-manifolds  and associative  submanifolds  via semi-fano 3-folds,  Duke Math. J. 164 (2015), 1971--2092.
\bibitem[CKT2018]{CKT2018}  {\sc K.F. Chan, S. Karigiannis and C.C. Tsang},
The $\Ll_B$-cohomology on compact torsion-free ${\rm G}_2$ manifolds 
and an application to `almost' formality, Annals of Global Analysis and Geometry 55 (2019), 325-369, arXiv:1801.06410. 
\bibitem[Dao1977]{Dao1977}{\sc D. C. Thi},  Minimal real currents  on compact Riemannian manifolds, Izv. Akad. Nauk, USSR, Ser. Math. 41 (1977), 853--867.
\bibitem[Douady1966]{Douady1966}{\sc A. Douady}, Le probl\'eme des modules pour les sous-espaces analytiques compacts
d'un espace analytique donn\'e, Ann. Inst. Fourier 16 (1966), 1--95.
\bibitem[DK1990]{DK1990}{\sc S. K. Donaldson  and  P. B. Kronheimer},  The Geometry  of four-manifolds, Clarendon Press, Oxford, 1990.
\bibitem[DS2011]{DS2011}{\sc S. K. Donaldson and E.  Segal}, Gauge theory in higher dimensions, II. Surveys in differential geometry. Volume XVI. Geometry of special holonomy and related topics, 
Surv. Differ. Geom. 16, Int. Press, Somerville, MA, 2011, pp. 1--41.
\bibitem[DT1998]{DT1998}{\sc S. K. Donaldson,  R. P.  and Thomas},  Gauge Theory in higher dimensions,
in: The Geometric Universe, Huggett et al. Eds., Oxford Univ. Press, 1998.
\bibitem[Fernandez1986]{Fernandez1986}{\sc M. Fernandez}, A classification of Riemannian manifolds with structure group Spin(7), Ann. Mat. Pur. Appl. 143 (1986), 101--122.
\bibitem[Frisch1967]{Frisch} {\sc J. Frisch}, Points de platitude d'un morphisme d'espaces analytiques complexes, Inv. Math. 4 (1967), 118--138.
\bibitem[Gayet2014]{Gayet2014}{\sc D. Gayet}, Smooth moduli spaces of associative submanifolds,  Q. J. Math. 65 (2014), 1213--1240.
\bibitem[Gray1969]{Gray1969}{\sc A. Gray}, Vector cross products on manifolds, Trans. Amer. Math. Soc. 141 (1969) 465-504.
\bibitem[GW2011]{GW2011}{\sc D. Gayet and F. Witt}, Deformations of associative submanifolds with boundary, Adv. Math. 226 (2011), 2351--2370.
\bibitem[Getzler2009]{Getzler2009}{\sc E. Getzler}, Lie theory for nilpotent $L_\infty$-algebras, Ann. Math. 170 (2009), 271--301.
\bibitem[GIP2003]{GIP2003}{\sc J. Gutowski, S. Ivanov, and G. Papadopoulos},  Deformations of generalized calibrations and compact non-K\"ahler manifolds
with vanishing first Chern class. Asian J. Math. 7 (2003), 39--79.
\bibitem[GYZ2003]{GYZ2003}{\sc S. Gukov, S.-T. Yau, and E. Zaslow}, Duality and fibrations on 
$G_2$-manifolds, Turk. J. Math. 27 (2003), 61--97. 
\bibitem[HL1982]{HL1982}{\sc R. Harvey and H. B. Lawson},  Calibrated geometry, Acta Math. 148 (1982), 47--157.
\bibitem[HL2009]{HL2009}{\sc R. Harvey and H. B. Lawson},  An introduction to potential theory in calibrated geometry, Amer. J. Math 131 (2009) no. 4, 839-944.
\bibitem[HM1986]{HM1986}{\sc R. Harvey and F. Morgan},  The faces of  the Grassmannian of three-planes  in $\R^7$, Inv. Math.  83 (1986), 191--228.
\bibitem[H1997]{H1997}{\sc M. W. Hirsch}, Differential Topology, Graduate Text in Mathematics, Springer, New York, 1997.
\bibitem[Ji2014]{Ji2014}{\sc X. Ji}, Simultaneous deformations of a Lie algebroid and its Lie subalgebroid,
J. Geom. Phys. 84 (2014), 8--29.
\bibitem[Joyce2007]{Joyce2007}{\sc D. D. Joyce}, Riemannian holonomy  groups  and calibrated  geometry, Oxford Univ. Press, 2007.
\bibitem[JS2005]{JS2005}{\sc D. D. Joyce and S. Salur},  Deformations of asymptotically
cylindrical coassociative submanifolds with fixed boundary, Geom.
Topol. 9 (2005), 1115--1146.
\bibitem[Kawai2017]{Kawai2017}{\sc K. Kawai}, Deformations of homogeneous associative submanifolds in nearly parallel $G_2$-manifolds,
Asian J. Math. 21 (2017), 429--462. 
\bibitem[KLS2017a]{KLS2017a}{\sc K. Kawai, H. V. L\^e and L. Schwachh\"ofer}, The  Fr\"olicher-Nijenhuis  bracket
and the geometry of $G_2$-and  Spin(7)-manifolds, Ann. Math. Pur. Appl. 197 (2018), 411--432. 
\bibitem[KLS2017b]{KLS2017b}{\sc K. Kawai, H. V. L\^e and L. Schwachh\"ofer}, Fr\"olicher-Nijenhuis cohomology  on $G_2$  and  Spin (7)-manifolds,  International Journal of Mathematics, vol. 29  Nr. 12 (2018), 1850075 (36 pages),  arXiv:1703.05133.
\bibitem[KLS2018]{KLS2018}{\sc K. Kawai, H. V. L\^e and L. Schwachh\"ofer}, Fr\"olicher-Nijenhuis  bracket  on manifolds with special holonomy, in: Lectures and Surveys on $G_2$-Manifolds and Related Topics, S. Karigiannis, N. Leung, J. Lotay (Eds.), Fields Institute Communications, 201--215, arXiv:1810.12714.
\bibitem[Koiso1983]{Koiso1983}{\sc N. Koiso},  Einstein metric and complex structures, Inv. Math.
 73 (1983), 71--106.
 \bibitem[KMS1993]{KMS1993}{\sc  I. Kolar, P. W. Michor and J. Slovak}, Natural operators in differential geometry, Springer, 1993.
\bibitem[KL2009]{KL2009}{\sc A. Kovalev and J. Lotay}, Deformations of compact coassociative 4-folds with
boundary, J. Geom. Phys. 59 (2009), 63--73.
\bibitem[KL2012]{KL2012}{\sc S. Karigiannis  and N. C. Leung},  Deformations of calibrated subbundles of Euclidean spaces via twisting by special sections, Ann. Glob. Anal. Geom. 42 (2012), 371--389.
\bibitem[KP2002]{KP2002}{\sc S. G. Krantz  and H. R. Park}, A prime of real analytic functions, Birkh\"ause, 2002.
\bibitem[Le1990]{Le1990}{\sc H. V. L\^e}, Relative calibration and the problem of stability of minimal surfaces, Lect. Notes in Math. 1453, Springer-Verlag, 1990, pp. 245--262.
\bibitem[Le1990b]{Le1990b}{\sc H. V. L\^e},  Jacobi equations on minimal homogenous  submanifolds in homogenous Riemannian spaces, (Russian) Funkt. Anal. i Prilozhen. 24 (1990), 50--62; translation in
Funct. Anal. Appl. 24 (1990), 125--135.
\bibitem[Le1993]{Le1993}{\sc  H. V. L\^e}, Application of integral  geometry to minimal surfaces, Int. J. Math. 4 (1993), 89--111.
\bibitem[Le2013]{Le2013}{\sc H. V. L\^e},  Geometric structures associated with a simple Cartan 3-fom, J. Geom. Phys. 70 (2013), 205--223.
\bibitem[LO2016]{LO2016}{\sc H. V. L\^e and Y.-G. Oh}, Deformations of coisotropic submanifolds in locally conformal symplectic manifolds, Asian J. Math. 20 (2016), 555--598.
\bibitem[Lotay2009]{Lotay2009}{\sc J. Lotay}, Deformation theory of asymptotically conical coassociative 4-folds, Proc. Lond. Math. Soc. 99 (2009), 386--424.
\bibitem[LOTV2014]{LOTV2014}{\sc H. V. L\^e, Y-G. Oh, A. G. Tortorella  and  L. Vitagliano}, Deformations of coisotropic submanifolds
in Jacobi manifolds, J. Sympl. Geom. 16 (2018), 1051--1116.
\bibitem[LS2014]{LS2014}{\sc H. V. L\^e  and L. Schwachh\"ofer}, Lagrangian submanifolds in strict  nearly K\"ahler  6-manifolds, Osaka J. of Math., 56 (2019), 601--629, arXiv:1408.6433.
\bibitem[LV2017]{LV2017}{\sc H. V. L\^e and J. Van\v zura}, McLean's second variation formula revisited, J. Geom. Phys. 113  (2017), 188--196.
\bibitem[Leung2002]{Leung2002}{\sc N. C. Leung}, Topological quantum field theory for Calabi-Yau threefolds and $G_2$-manifolds, Adv. Theor. Math. Phys. 6 (2002), 575--591.
\bibitem[Manetti2007]{Manetti2007}{\sc M. Manetti}, Lie description of higher obstructions to deforming submanifolds, Ann. Sc.
Norm. Super. Pisa Cl. Sci. 6 (2007), 631--659.
\bibitem[McLean1998]{McLean1998}{\sc R. C. McLean}, Deformations of calibrated submanifolds, Comm. Anal. Geom. 6 (1998), 705--747.
\bibitem[Morrey1954]{Morrey1954}{\sc C. B. Morrey}, Second order elliptic system of partial differential equations, 101-160, in Contribution to the theory of Partial differential equations, Ann. of Math. Study, 33, Princeton Univ. Press, Princeton, 1954.
\bibitem[Morrey1958]{Morrey1958}{\sc C. B. Morrey}, On the analyticity of the solutions of analytic non-linear elliptic systems of partial differential equations, I , II, AJM 80 (1958),198--218, 219--237.
\bibitem[Morrey2008]{Morrey2008}{\sc C. B. Morrey}, Multiple Integrals in the Calculus of the variations, Springer, 2008.
\bibitem[Ohst2014]{Ohst2014}{\sc M. Ohst}, Deformations of Compact Cayley Submanifolds with Boundary, arXiv:1405.7886.
\bibitem[Robles2012]{Robles2012}{\sc C. Robles},  Parallel calibrations and minimal submanifolds,  Illinois J. Math.  56 (2012), 383--395.
\bibitem[Salamon1989]{Salamon1989}{\sc S.M. Salamon}, Riemannian geometry and holonomy groups, Pitman Res. Notes in Math. 201, Longman, Harlow, 1989.
\bibitem[Simons1968]{Simons1968}{\sc  J. Simons}, Minimal subvarieties in Riemannian manifolds, Ann. Math. 88 (1968), 62--105.
\bibitem[SYZ1996]{SYZ1996}{\sc A. Strominger, S.-T. Yau, and E. Zaslow}, Mirror symmetry is T-duality, Nucl. Phys. B 479 (1996), 243-259.
\bibitem[Takeuchi1962]{Takeuchi1962}{\sc M. Takeuchi}, On Pontrjagin classes of compact symmetric spaces.
J. Fac. Sci. Univ. Tokyo Sect. 9 (1962), 313--328.
\bibitem[Tasaki1985]{Tasaki1985}{\sc H. Tasaki}, Certain minimal or homologically volume minimizing submanifolds in compact
symmetric spaces, Tsukuba J. Math. 9 (1985), 117--131.
\bibitem[Tian2000]{Tian2000}{\sc G. Tian}, Gauge theory and calibrated geometry. I, Ann. Math. 151 (2000),
193--268.
\bibitem[Voronov2005]{Voronov2005} {\sc T.~Voronov}, Higher derived brackets and homotopy algebras, J.~Pure Appl.~Algebra 202 (2005), 133--153.
\bibitem[Walpuski2012]{Walpuski2012}{\sc T. Walpuski}, $G_2$-instantons, associative submanifolds and
Fueter sections,  Comm. Anal. Geom. 25 (2017), 847--893.
\bibitem[Walpuski2014]{Walpuski2014}{\sc T. Walpuski}, Spin(7)-instantons, Cayley submanifolds and
Fueter sections, Comm. Math. Phys. 352 (2017), 1--36. 
\bibitem[ZJ2005]{ZJ2005} Morse functions on Grassmann manifolds, Proc. Roy. Soc. Edinburgh Sect A 135 (2005) no. 1, 209-221.
\end{thebibliography}
\end{document}